\newtheorem{theorem}{Theorem}[section]
\newtheorem{corollary}{Corollary}[section]
\newtheorem{lemma}{Lemma}[section]
\newtheorem{proposition}{Proposition}[section]
\theoremstyle{definition}
\newtheorem{definition}{Definition}[section]
\newtheorem{remark}{Remark}[section]
\newtheorem{warning}{Warning}[section]
\newcommand{\x}{\boldsymbol{x}}
\newcommand{\pp}{\boldsymbol{p}}
\newcommand{\R}{\mathbb{R}}
\newcommand{\p}{\mathbb{P}}
\newcommand{\EE}{\mathbb{E}}
\newcommand{\SSS}{\mathbb{S}}
\newcommand{\g}{\mathfrak g}
\newcommand{\gh}{\mathfrak h}
\newcommand{\so}{\mathfrak{so}}
\DeclareMathOperator{\tr}{trace}
\DeclareMathOperator{\id}{id}
\DeclareMathOperator{\SO}{\mathsf{SO}}
\DeclareMathOperator{\OOO}{\mathsf{O}}
\DeclareMathOperator{\CO}{\mathsf{CO}}
\DeclareMathOperator{\GL}{\mathsf{GL}}
\DeclareMathOperator{\SE}{\mathsf{SE}}
\DeclareMathOperator{\Gr}{Gr}
\DeclareMathOperator{\Aff}{Aff}
\newcommand{\E}{\mathcal{E}}
\newcommand{\CC}{\mathcal{C}}
\newcommand{\TT}{\mathcal{T}}
\newcommand{\NN}{\mathcal{N}}
\newcommand{\Th}{^\textrm{th}}
\begin{document}

\title{A general method to construct invariant PDEs on homogeneous manifolds}
   \author{Dmitri V. Alekseevsky}
  \address{Institute for Information Transmission Problems, B. Karetny
per. 19, 127051, Moscow (Russia) and University of Hradec Kr\'alov\'e,
Faculty of Science, Rokitansk\'eho 62, 500~03 Hradec Kr\'alov\'e,  Czech Republic}
\email{dalekseevsky@iitp.ru}
\author{Jan Gutt}
\address{Center for Theoretical Physics of the Polish Academy of Sciences,
Al. Lotnik\'ow 32/46,
02-668 Warsaw,
Poland}
 \email{jan.gutt@gmail.com}
 \author{Gianni Manno}
   \address{Dipartimento di Matematica ``G. L. Lagrange'', Politecnico di Torino, Corso Duca degli Abruzzi, 24, 10129 Torino, Italy.}
    \email{giovanni.manno@polito.it}
 \author{Giovanni Moreno}
 \address{Department of Mathematical Methods in Physics,
 Faculty of Physics, University of Warsaw,
ul. Pasteura 5, 02-093 Warszawa, Poland}
 \email{giovanni.moreno@fuw.edu.pl}
\date{\today}
\maketitle

\begin{abstract}
Let  $M = G/H$  be  an  $(n+1)$-dimensional  homogeneous manifold  and  $J^k(n,M)=:J^k$ be  the manifold of $k$-jets  of hypersurfaces of $M$. The Lie group $G$ acts naturally on each  $J^k$.  A $G$--invariant partial differential equation   of order $k$ for hypersurfaces of $M$ (i.e., with $n$ independent variables and $1$ dependent one) is defined  as a  $G$--invariant hypersurface  $\E \subset  J^k$.
We describe  a general method   for   constructing   such  invariant  partial differential equations  for $k\geq 2$.   The problem  reduces  to  the description  of  hypersurfaces,  in  a certain  vector  space,  which are invariant  with  respect  to  the linear action  of  the  stability  subgroup $H^{(k-1)}$ of the $(k-1)$--prolonged action of $G$. We  apply  this  approach  to  describe  invariant  partial differential equations  for hypersurfaces  in the Euclidean   space $\EE^{n+1 }$
%$\EE^{n+1 } = \SE(n+1)/\SO(n+1)$
%the affine space $\mathbb{A}^{n+1}$,
%$=\Aff(n+1)/\GL(n+1)$
%he  projective  space  $\p^{n+1}$
%$=  SL(n+2)/ SL(n+2)_{\mathbb{R}e_1}$
and in the conformal  space $\SSS^{n+1}$.
%Some results concerning $G$--invariant systems of PDEs are also obtained.
%$=SO(n+1)/ Sim(n)$
%
Our method works under some mild assumptions on the action of $G$, namely:  A1) the  group $G$ must have  an open   orbit  in $J^{k-1}$, and A2) the stabilizer $H^{(k-1)}\subset G$   of the fibre  $J^k\to J^{k-1}$ must factorize via the group of translations  of the fibre itself.

% $\check{J}^{k-1} :=  G \cdot o^{k-1}\subset J^{k-1}$, $o^{k-1}\in J^{k-1}$,
%%and that  there is a $k$-jet $o^{k} \in J^k(n,M)$  over  the point  $o^{k-1}$   such  that   the    restriction    of   the    affine  bundle   $\pi_{k,k-1} : J^k(n,M) \to J^{k-1}(n,M)$, $k\geq 2$,    to  the orbit   $G\cdot o^{k}$   is  an  affine  subbundle  of  $\pi_{k,k-1}$.
%and that the affine group $\tau(H^{(k-1)})$ defined by the affine action of the group $H^{(k-1)}$ in the fibre  at $o^{k-1}$ of the affine bundle $J^k\to J^{k-1}$, $k\geq 2$, factorizes into a product of the linear group and a translational one.

\end{abstract}

\textbf{Keywords:} Jet spaces, homogeneous manifolds, invariant theory, PDEs.

\smallskip

\textbf{MSC Classification 2010:} 58A20, 58J70, 53C30, 35A30.

\setcounter{tocdepth}{2}

%\tableofcontents
%

\section{Introduction}

\subsection{Starting point}

In this paper we continue a research program started by us in \cite{MR3904635} and, from a slightly different point of view, by   D. The in    \cite{The2018}, that is  developing tailor--made geometric and algebraic methods to explicitly construct  partial differential equations (PDEs, for short) that  admit a given group of symmetries. The problem itself is rather old and classical: its origins date   back to the works of Lie, Darboux, Cartan and others. In what follows,
we  consider  the global problem of  describing  $G$--invariant  PDEs, where $G$ is a Lie  group acting  transitively on an $(n+1)$--dimensional homogeneous manifold $J^0=M =G/H$: such manifold is interpreted as the space of $n$ independent variables and a dependent one. We propose a method to construct such invariant PDEs: our  approach  admits also a local  reformulation  in terms  of the Lie  algebra  $\mathfrak{g}$ of the group $G$ and, as such, it can be regarded as a  method  to construct  $\mathfrak{g}$--invariant PDEs. One of the key tool of our analysis is going to be the affine structure of the bundles $\pi_{\ell,\ell-1}:J^{\ell}(n,M)\longrightarrow J^{\ell-1}(n,M)$
%(reviewed in Section \ref{SecMainRes} below)
of the spaces of $\ell$--jets of $n$--dimensional embedded submanifolds of $M$ (that is, hypersurfaces of $M$),  for  $\ell\geq 2$.
%\textbf{
By contrast, in the aforementioned paper \cite{MR3904635}, the authors started from a homogeneous  $(2n+1)$--dimensional \emph{contact} manifold for a complex simple Lie group and looked for invariant hypersurfaces, in the corresponding Lagrangian Grassmanian, whose algebraic degree (measured via the Pl\"ucker embedding) is minimal: only in some special cases such a minimal degree is attained by the so--called Lagrangian Chow transform of the sub--adjoint varieties, which, in general, display a very high degree;  the same output of the Lagrangian Chow transform   can be obtained by the original techniques, based on Jordan algebras instead, developed by    D. The in \cite{The2018}.\par
% 
%\footnote{$J^1_{\pp}(n,M)$ coincides with the Grassmannian of hyperplanes of $T_{\pp}M$, that in its turn can be identified with $T^*_{\pp}M$. It is given in terms of equivalence classes of hypersurfaces, with respect to the relation of first order tangency at a point. The  definitions of the higher--order jet spaces $J^{\ell}(n,M)$, for $\ell\geq 2$, can be formalized in an analogous fashion, simply by raising the order of tangency.}
In the classical language of symmetries of PDEs, when a group $G$ acts on $M$ we say that it acts by \emph{point transformations}, whereas when $G$ acts on the contact manifold $J^1(n,M)$ we speak of \emph{contact transformations} instead (see, e.g., \cite{MR1670044}); it is well known that not all contact manifolds are the projectivized cotangent bundle of a manifold and, even when they are, not all contact transformations can be obtained by lifting point ones. More precisely, in \cite{MR3904635} the departing point is a homogeneous contact manifold  with respect to a \emph{complex}  simple Lie group of  contact transformations, whereas in  the present work we deal with \emph{real} Lie groups acting on $M=J^0(n,M)$  and satisfying some mild assumptions, see Section \ref{sec:descriptions} below.

\subsection{Preliminary definitions}\label{sec:preliminaries}

Throughout this paper  $M=G/H$ will be an $(n+1)$--dimensional homogeneous manifold
and  $S \subset M$ an embedded  hypersurface of $M$, unless  otherwise specified. Locally,  in an appropriate  local chart
\begin{equation}\label{eqn:coord.u.x}
(u,\x)=(u,x^1, \ldots, x^n)
\end{equation}
of $M$, the hypersurface $S$  can  be  described  by an  equation   $u = f(\x) =  f(x^1, \dots, x^n)$, where  $f$~is  a smooth function    of the variables $x^1, \dots, x^n$, that we refer to as the \emph{independent} variables, to   distinguish them  from the remaining   coordinate $u$,  that is the \emph{dependent} one.\footnote{A reader %well versed in
who is familiar with the standard literature about jet spaces may have noticed that we reversed the order of $\x$ and $u$: this choice will be more convenient for us as the coordinate $u$ will play the role of the ``$0^{\textrm{th}}$ coordinate''.
%the dependent coordinate $u$ is going to play the role of ``transversal'' coordinate \textbf{DIRE MEGLIO}
% this will become particularly clear in the case when  $M$ is the Euclidean space $\EE^{n+1}$ and we take $\{e_0,e_1,\ldots,e_n\}$ as the orthonormal basis associated to the coordinates $(u,\x)$.
}
%doing so, we manage to keep the numerical ordering of the basis elements, while making $e_0$ a ``special element'' (had we stuck to the commonly used notation, we should have used the symbol $e_{n+1}$, which seems to suggest another independent variable $x^{n+1}$, rather than the dependent one).
 We say that such  a chart is  \emph{admissible} for $S$ or, equivalently, that the hypersurface $S$ is (locally) admissible for the chart $(u,\x)$.
%the  coordinate $u$ is  called the \emph{transversal  coordinate}.
We denote by $S_f=S$ the graph of $f$:
\begin{equation*}
S_f:=\{\big(f(\x)\,,\x\big)\}=\{u=f(\x)\}\, .
\end{equation*}
%and by
%$$
%j^{\ell}_{\x}f:=[f]^{\ell}_{\x}
%$$
%the $\ell$--jet of the function $f$ at the point $\x=(x^1,\dots,x^n)$, i.e., the Taylor expansion of $f$ at $\x$ up to order $\ell$.
%Given two hypersurfaces passing through the same point $\pp\in M$, we can always find a chart (through   $\pp$) of $M$ that is admissible for both: in other words, we can always assume that two hypersurfaces $S_1$ and $S_2$, in a neighborhood of a common point $\pp\in S_1\cap S_2$, are  the graph of two functions $f_1$ and $f_2$, respectively.
%

Given two hypersurfaces $S_1$ and $S_2$ through a common point $\pp $, one can always choose a chart $(u,\x)$ near $\pp$ that is admissible for both: $S_1=S_{f_1}$, $S_2=S_{f_2}$. This paves the ground for  the  following definition: even if it is   intrinsically geometric, we  rather give it in a coordinate--wise form to better fit the general approach of the paper;   standard techniques allow to show its independence of the choice of coordinates.

\begin{definition}\label{defDmitriGetti}
Two hypersurfaces $S_{f_1}, \, S_{f_2}$ through a common point $\pp = (u,\x)$ are called \emph{$\ell$--equivalent} at $\pp$  if  the Taylor expansions of $f_1$ and $f_2$ coincide at $\x$ up to order $\ell$.  The  class  of $\ell$--equivalent hypersurfaces to a given hypersurface $S$ at the point $\pp$ is denoted by $[S]^{\ell}_{\pp}$. The union
\begin{equation*}
J^{\ell}(n,M):=\bigcup_{\pp\in M}\{[S]^{\ell}_{\pp}\mid \text{$S$ is a hypersurface of $M$ passing through $\pp$}\}
\end{equation*}
of all these equivalence classes is the \emph{space of $\ell$--jets of hypersurfaces} $J^{\ell}(n,M)$ of $M$.
\end{definition}
Note that $J^1(n,M)=\Gr_n(TM)=\p T^*M$. From now on, when there is no risk of confusion, we let
$$
J^{\ell}:=J^{\ell}(n,M)\,.
$$
The natural projections
\begin{equation*}%\label{eqEqPiKaKaMenoUno}
\pi_{\ell,m}:J^{\ell}\stackrel{}{\longrightarrow} J^{m}\,,\quad [S]^{\ell}_{\pp}\longmapsto [S]^m_{\pp}\, ,\quad \ell>m\, ,
\end{equation*}
define a tower of bundles
%\begin{equation*}
$$
\dots\longrightarrow\ J^{\ell}\longrightarrow J^{\ell-1}\longrightarrow\dots\longrightarrow J^1=\mathbb{P}T^*M\longrightarrow J^0=M\,,
$$
that turn out  to be affine for $\ell\geq 2$.
%\end{equation*}
For any $a^m\in J^{m}$,  the fiber of $\pi_{\ell,m}$ over $a^m$ will be indicated by the symbol
$$
J^{\ell}_{a^m}:=\pi_{\ell,m}^{-1}(a^m)\,.
$$
\subsection{Assumptions on the Lie group $G$}\label{sec:descriptions}

In what follows, unless otherwise specified, $o$ is a fixed point of  $M=G/H$ (an ``origin'') and $o^{\ell}$ a point of $J^\ell$, so that $M=G\cdot o$.  This allows us to consider the fibre $J^{\ell}_{o^{\ell-1}}$ as   a vector  space   with  the  origin  $o^{\ell}$ playing the role of zero vector.
The  group  $G$    acts naturally on each  $\ell$--jet space $J^{\ell}$:
$$
G \ni  g :   o^{\ell}= [S]^{\ell}_{o}\in J^{\ell}   \to [g(S)]^{\ell}_{g(o)}\in J^{\ell}, \,\,  o \in S\,.
$$
We observe that such an action preserves the affine structure of the fibres of $\pi_{\ell,\ell-1}$ for $\ell\geq 2$.
We denote by $H^{(\ell)}$ the stability subgroup $G_{o^{\ell}}$ in $G$ of the point $o^{\ell}$:
$$
H^{(\ell)}:=G_{o^{\ell}}\,.
$$
In this context, $G$--invariant PDEs (of order $k$) are submanifolds of $J^k$ such that $G\cdot\E=\E$.\par
In order to formulate  our  method for constructing these $G$--invariant PDEs, we have to slightly restrict the class of groups $G$ under consideration; more precisely, we are going to   assume that there exists a point $o^k\in J^k$, with $k\geq 2$, projecting to $o^{k-1}\in J^{k-1}$ such that:\par\medskip
%\begin{itemize}
\noindent \textbf{(A1)} the orbit
\begin{equation*}
\check{J}^{k-1} := G\cdot o^{k-1} = G/H^{(k-1)} \subset  J^{k-1}
\end{equation*}
through $o^{k-1}$ is open;\par\medskip
\noindent \textbf{(A2)} the orbit
\begin{equation}\label{eqn:Wk.orbit}
W^k:=\tau(H^{(k-1)})\cdot o^{k}\subset J^k_{o^{k-1}}
\end{equation}
through $o^k$, where
\begin{equation}\label{eqn:tau.tauk.Dmitri}
\tau :  H^{(k-1)} \to \Aff(J^k_{o^{k-1}})
\end{equation}
is the  natural affine  action  of  the    stability  subgroup     $H^{(k-1)}=G_{o^{k-1}}$   on  the  fibre  $J^k_{o^{k-1}}$,
is a vector space and   the group of translation of $W^k$, that we denote by $T_{W^k}$, is contained in $\tau(H^{(k-1)})$.
%$$
%\tau(H^{(k-1)})\cdot o^{k}
%$$
%\end{itemize}
\begin{remark}
Assumption (A2) implies that  there is a point $o^{k} \in J^k$  over  the point  $o^{k-1}$   such  that   the    restriction    of   the    affine  bundle   $\pi_{k,k-1} : J^k\to J^{k-1}$ to  the orbit   $G\cdot o^{k}$   is  an  affine  subbundle  of  $\pi_{k,k-1}$ (over the base $\check{J}^{k-1}$).
\end{remark}
\subsection{A method for obtaining $G$--invariant PDEs}
Our main concern is the problem of finding   $G$--invariant PDEs   $\E \subset J^k$: if $G$ satisfies   the mild assumptions explained in Section \ref{sec:descriptions} above, 
 the affine structure of the natural bundle $\pi_{k,k-1} : J^k \to J^{k-1}$, for $k\geq 2$, will allow us to recast it   as the problem of describing  submanifolds of the fibre
$J^k_{o^{k-1}}$
that are invariant under the   affine action of  the   stability  subgroup $H^{(k-1)}$ in $G$ of $o^{k-1}$.
Moreover,  in our main  Theorem \ref{thMAIN1}, we  reduce  this last problem  to   describe $G$--invariant submanifolds  of  the    quotient  vector  space
\begin{equation}\label{eqn:V.bar}
V^k   := J^k_{o^{k-1}} / W^k\,.
\end{equation}
% where  $W = H^{(k-1)}\cdot o^{k}$  is  the  vector   subspace of   the  fiber  $V^k$.$\tau$ is the  natural affine  action  of  the    stability  group     $H^{(k-1)}$   in  the  fiber $J^k_{o^{k-1}}$. \textbf{$\tau$.... IS A VECTOR SPACE. PLEASE EXPLAIN}
Since  the  action of $G$ on  $V^k$  is linear,  the above problem becomes a  standard problem  in the  theory of invariants  of  a linear Lie  group, which  is  much  simpler  than  the initial one---that is, the problem of  describing  the invariants  for  a non--linear  action   of    the Lie  group $G$  on  the manifold  $J^k$. As an application of the main Theorem \ref{thMAIN1}, we  solve  this  problem  in   the  case  when  the homogeneous  manifold $M = G/H$  defines  either the Euclidean or the conformal  geometry (in  the  sense of F. Klein).
%one of  the  classical geometries (in  the  sense of F. Klein): euclidean, conformal.
%projective and  affine.
We stress that the approach we propose does not rely on machine--aided computations and at the same time sheds light on some geometric properties of the $G$--invariant PDEs.

\par
%The difference between these two approaches

\subsection{Structure of the paper}\label{sec.structure}

In Section \ref{SecMainRes} we   recall  some basic definitions concerning the geometry of the spaces $J^{\ell}=J^{\ell}(n,M)$ of $\ell$--order jets  of hypersurfaces of an $(n+1)$--dimensional manifold $M$, as well as  of their subbundles, that   are systems of PDEs in one unknown variable.

In Section \ref{secConstructingGinvPDEs}, under the assumptions (A1) and (A2) of Section \ref{sec:descriptions}, we prove  the main Theorem \ref{thMAIN1},
%In Section \ref{secConstructingGinvPDEs} we   formulate  the  main  assumptions   on  the  homogeneous manifold $M = G/H$, which allow to apply the method described below Theorem \ref{thMAIN1} to construct $G$--invariant PDEs.
%Such assumptions are the existence   of   an open orbit   $\check{J}^{k-1} = G\cdot o^{k-1} \subset  J^{k-1}$  for some $k \geq 2$   and, moreover, the  existence of   an orbit $\check{J}^k =  G\cdot o^{k} \subset  J^k$  over $\check{J}^{k-1}$  such  that  the natural   projection $\check{\pi}_{k,k-1}:   \check{J}^{k} \to  \check{J}^{k-1} $ is  an affine  subbundle of  the  affine  bundle $\pi_{k,k-1}$.
which reduces the  construction   of  $G$--invariant PDEs $\mathcal{E} \subset   J^k$ to   the description  of  hypersurfaces  of $V^{k}$ that are  invariant  with  respect  to  the linear action  of  the stability  group $H^{(k-1)}$.

%The strategy described below Theorem \ref{thMAIN1} is then used in  the  subsequent  sections.

In Section \ref{secCasEucl} we carefully examine  the case when $M =  \EE^{n+1}$ is the   $(n+1)$--dimensional Euclidean space,  considered  as    the  homogeneous  space   $\EE^{n+1} = \SE(n+1)/\SO(n+1)$ of  the  group  of    orientation--preserving  motions.
It would be sensible to stress that the main purpose of discussing here the Euclidean case is that of  testing  the  results of Section \ref{secConstructingGinvPDEs} on a particularly simple and well--known ground.
%it is indeed a common knowledge that, by requiring that a $2\Nd$ order PDE be   $\SE(n+1)$--invariant, one obtains a so--called equation of Weingarten surfaces; this is basically due to the fact that the fiber of $J^2$, which is the space of symmetric matrices, is acted upon by $\SO(n)$ and the fundamental invariants thereof are functions of the eigenvalues.\par
%
%

In Section \ref{secCasConf} we pass to the case  when $M$ is the conformal sphere $\SSS^{n+1}$, that is a homogeneous space of the special orthogonal  group $\SO(1,n+2)$, called also the \emph{M\"obius group}:  we obtain the conformally invariant PDEs in terms of invariants of a certain space of traceless quadratic forms. For instance, in Section \ref{sec:conf.local}, in the case of two independent variables, we see that the unique $\SO(1,4)$--invariant PDE is expressed in terms of the Fubini's conformally invariant first fundamental form.\par

\par

We would like to underline that the invariant PDEs we found are expressed as the zero set of a function of the invariants (of a certain order) of the considered group. This, of course, does not guarantee that the so--obtained PDE is a scalar one, as the zero set of a real--valued function is not always a codimension--one submanifold. In fact, this happens for the conformal group, as described in .
%Furthermore, also affine and projective invariant PDEs, in the case of two independent variables, can be actually a system of PDEs (depending on the signature of the Hessian), as described in Remark \ref{subEsempioA3}.

A key clarification is in order. The main output of the applicative  Sections  \ref{secCasEucl}  and \ref{secCasConf} are invariant polynomials: their zero sets will then provide us with the $G$--invariant PDEs, understood as hypersurfaces, that were predicted by the main theoretical result, Theorem \ref{thMAIN1}, but only in the case when the aforementioned zero set is a submanifold of codimension one (see on this concern the   example treated in Section \ref{sec:conf.local}). In this perspective, in  Sections  \ref{secCasEucl}  and \ref{secCasConf} we produce more invariant objects than those given by Theorem \ref{thMAIN1}, and indeed the corresponding Theorem \ref{th.euc.main} and Theorem \ref{th.conf.main} state that among the zero sets there are the PDEs anticipated by Theorem \ref{thMAIN1}---they do not claim that these zero sets account for all such PDEs. Such a discrepancy disappears in the complex case, and this is the main reason why in the already cited work \cite{MR3904635}  the authors worked with complex Lie groups form the outset.

\section{The  affine structure of the bundles of jet spaces}\label{SecMainRes}

\subsection{Jets of hypersurfaces of $M$}\label{subJetsSubs}

The space $J^{\ell}$ has a natural structure of smooth manifold: one way to see this is to extend the   local coordinate system \eqref{eqn:coord.u.x} on $M$
to  a  coordinate system
\begin{equation}\label{eqn:jet.coordinates}
(u,\x,\ldots,u_i,\ldots, u_{ij}, \ldots, u_{i_1 \cdots i_l},\ldots) = (u,x^1,\dots,x^n,\ldots,u_i,\ldots, u_{ij}, \ldots, u_{i_1 \cdots i_l},\ldots)
\end{equation}
on $J^{\ell}$, where each coordinate function\footnote{The $u_{i_1 \cdots i_k}$'s are symmetric in the lower indices.} $u_{i_1 \cdots i_k}$, with $k\leq \ell$,  is unambiguously defined by the rule
\begin{equation}\label{eqDefHigCoordJets}
u_{i_1 \cdots i_k}\left([S_f]^{\ell}_{\pp}\right)= \partial^k_{i_1 \cdots i_k}f(\x)\, ,\quad \pp=(u,\x)\,,\quad k\leq \ell\,.
\end{equation}
In formula \eqref{eqDefHigCoordJets} above the symbol $\partial_i$ denotes the partial derivative $\partial_{x^i}$, for  $i= 1,  \ldots, n$; we recall that the hypersurface $S=S_f$ is the graph of the function $u=f(\x)$ and, as such, it is   admissible for the chart $(u,\x)$.\par
%
%The image of the natural map
%\begin{equation}\label{eqDefJayEllEss}
%j^{\ell}:S \longrightarrow  J^{\ell}\, ,\quad
%\pp \longmapsto  [S]^{\ell}_{\pp}\, ,
%\end{equation}
 %the \emph{$l\Th$ jet extension}  of $S$: indeed,  $j^0$ is nothing but the inclusion of $S$ into $J^0=M$, so that  the map $j^l$, which  is an embedding as well, may be regarded as an ``extension'' of $S$ itself; this is why we denote by
The $\ell$--lift of $S$ is defined by
\begin{equation}\label{eq:jet.ext}
S^{(\ell)}:=\{[S]^{\ell}_{\pp}\,\,|\,\,\pp\in S\}\,.
\end{equation}
It is an $n$--dimensional submanifold of $J^{\ell}$. If $S=S_f$ is the graph of  $u=f(\x)$, then $S_f^{(\ell)}$ can be naturally parametrized as follows:\footnote{We stress once again that a switch has occurred between the first and the second entry, with respect to a more standard literature.}
\begin{equation}\label{eqn:lift.local}
\left(u=f(\x),\x,\dots u_i=\frac{\partial f}{\partial x^i}(\x) ,\dots  u_{ij}=\frac{\partial^2 f}{\partial x^i\partial x^j}(\x) , \dots \right)\,.
\end{equation}
\begin{remark}
In the case $M$ is a fibre bundle $\pi:M\to B$ with $n$--dimensional fibres, one can define the space of $\ell$--jets $J^l\pi$ of $\pi$ as the space of $\ell$--jets of the graphs of local sections of $\pi$. The space $J^{\ell}\pi$ is an open dense subset of $J^{\ell}=J^{\ell}(n,M)$. In the case that $\pi:\R\times N\to N$ is the trivial bundle, $\dim N=n$, then $J^{\ell}\pi$ coincides with the space of $\ell$--jets of functions on $N$.
\end{remark}

\subsection{The tautological bundle and the higher order contact distribution on $J^{\ell}$}
In this section,   to not overload the notation, we denote a point $[S]^{\ell}_{\pp}\in J^{\ell}$ by $a^{\ell}$.
The next  lemma is well known.
\begin{lemma}\label{lemma.Dmitri}
Any point $a^{\ell}=[S]^{\ell}_{\pp}\in J^{\ell}$ canonically defines the $n$--dimensional subspace
\begin{equation}\label{eq:Dmitri.1}
T_{a^{\ell-1}}S^{(\ell-1)}\subset T_{a^{\ell-1}}J^{\ell-1}\,,\quad a^{\ell-1}=\pi_{\ell,\ell-1}(a^{\ell})\,.
\end{equation}
\end{lemma}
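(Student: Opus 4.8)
The plan is to produce the $n$-plane in \eqref{eq:Dmitri.1} directly from the jet data, in local coordinates, and then check that the construction is independent of all choices. First I would fix an admissible chart $(u,\x)$ for $S$ near $\pp$, so that $S=S_f$ is the graph of $u=f(\x)$ and $a^{\ell}=[S_f]^{\ell}_{\pp}$. The induced jet coordinates \eqref{eqn:jet.coordinates} on $J^{\ell-1}$ and the parametrization \eqref{eqn:lift.local} of the lift express $S^{(\ell-1)}$ as the image of the map $\x\mapsto\bigl(f(\x),\x,\partial_i f(\x),\dots,\partial_{i_1\cdots i_{\ell-1}}f(\x)\bigr)$. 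Its tangent space at $a^{\ell-1}=\pi_{\ell,\ell-1}(a^{\ell})$ is spanned by the $n$ vectors obtained by differentiating this map with respect to $x^1,\dots,x^n$; in the standard notation these are the truncated total derivative operators $D_i^{(\ell-1)}$ at the point $a^{\ell}$, namely
\begin{equation*}
D_i^{(\ell-1)}\big|_{a^{\ell}}=\partial_{x^i}+u_i\,\partial_u+\sum_j u_{ij}\,\partial_{u_j}+\cdots+\sum u_{i\,i_1\cdots i_{\ell-1}}\,\partial_{u_{i_1\cdots i_{\ell-1}}}\,,
\end{equation*}
where the coordinate values $u_i,u_{ij},\dots,u_{i_1\cdots i_{\ell}}$ are exactly those read off from $a^{\ell}$. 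The key observation is that these coefficients only involve the derivatives of $f$ up to order $\ell$, i.e. only the jet $a^{\ell}$, so the span is well defined once $a^{\ell}$ is fixed.

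Next I would verify the two independence claims that make the construction canonical. The span $T_{a^{\ell-1}}S^{(\ell-1)}$ manifestly depends only on the submanifold $S$ through its lift, not on any parametrization; and since two hypersurfaces that are $\ell$-equivalent at $\pp$ have the same Taylor data up to order $\ell$, formula \eqref{eqDefHigCoordJets} shows their lifts $S^{(\ell-1)}$ are tangent to order one at $a^{\ell-1}$ — indeed they share the same $u_{i_1\cdots i_{\ell}}$, which are precisely the first-order data of the lift — so the $n$-plane depends only on the class $a^{\ell}=[S]^{\ell}_{\pp}$. Finally I would check chart-independence: passing to another admissible chart $(\tilde u,\tilde\x)$ is a diffeomorphism of $M$, which prolongs to a diffeomorphism of the tower $J^{\ell}\to J^{\ell-1}\to\cdots$ intertwining the projections and sending lifts to lifts; hence it carries $T_{a^{\ell-1}}S^{(\ell-1)}$ to the corresponding plane computed in the new chart. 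Transversality of the plane to the fibres of $\pi_{\ell-1,\ell-2}$, equivalently that the $D_i^{(\ell-1)}$ are linearly independent, follows since their $\partial_{x^i}$-components are already independent, so the plane is genuinely $n$-dimensional.

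The main obstacle, such as it is, is purely bookkeeping rather than conceptual: one must make sure the truncation of the total derivatives at level $\ell-1$ is set up so that its coefficients involve derivatives of $f$ of order at most $\ell$ and no higher, which is exactly what is needed for the plane to be determined by $a^{\ell}$ and not by $a^{\ell+1}$; getting the index ranges right in the symmetric coordinates $u_{i_1\cdots i_k}$ is where care is required. Since the lemma is stated to be well known, I would keep this verification brief, essentially recording formula \eqref{eqn:lift.local}, differentiating, and remarking that the output depends only on the $\ell$-jet.
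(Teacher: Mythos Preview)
Your argument is correct and is the standard one. Note, however, that the paper does not actually prove this lemma: it is simply declared ``well known'' and left without proof. What the paper does do, immediately afterwards, is record the truncated total derivatives \eqref{eqn:total.derivatives} as a local basis of the tautological bundle $\TT^{\ell}$, which is exactly the content of your computation. One small notational mismatch: what you call $D_i^{(\ell-1)}\big|_{a^{\ell}}$ is what the paper denotes $D_i^{(\ell)}$ (the superscript in the paper's convention records the order of the highest jet coordinate appearing as a coefficient, not the level of $J^{\bullet}$ in which the vector lives); you may want to align with that convention to avoid confusion with the surrounding text.
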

\begin{definition}\label{defTautBundle}
The \emph{tautological} rank--$n$ vector bundle $\TT^{\ell} \subset \pi_{\ell,\ell-1}^* (TJ^{\ell-1})$   is the bundle over $J^{\ell}$ whose fiber over the point $a^{\ell}$ is given by  \eqref{eq:Dmitri.1}, i.e.,
\begin{equation*}
\TT^{\ell}=\left\{ (a^{\ell},v)\in J^{\ell}\times TJ^{\ell-1}\,\,|\,\,v\in  T_{a^{\ell-1}}S^{(\ell-1)}\right\} \,.
\end{equation*}
\end{definition}
The (truncated) total derivatives
\begin{equation}\label{eqn:total.derivatives}
D_i^{(\ell)}:=\partial_{x^i}+\sum_{k=1}^{\ell}\sum_{j_1\leq\dots\leq j_{k-1}}u_{j_1\dots j_{k-1}\,i}\,\partial_{u_{j_1\dots j_{k-1}}}\,,\quad i=1\dots n\, ,
\end{equation}
constitute a local basis of the bundle $\TT^{\ell}$.\par

%
%\smallskip\noindent
By considering the preimage of the tautological bundle on $J^{\ell}$ via the differential $d\pi_{\ell,\ell-1}$ of the canonical projection $\pi_{\ell,\ell-1}$, we get a distribution on $J^{\ell}$,   denoted by $\CC^{\ell}$:
\begin{equation*}
\CC^{\ell}:=(d\pi_{\ell,\ell-1})^{-1}\TT^{\ell}\, .
\end{equation*}
\begin{definition}
 $\CC^{\ell}$ is called the \emph{$\ell\Th$ order contact structure} or \emph{Cartan distribution} (on $J^{\ell}$).
\end{definition}
Above formula, applied to a particular point $a^{\ell}$ of $J^{\ell}$ that projects on  $a^{\ell-1}\in J^{\ell-1}$, reads
\begin{equation}\label{eqDefCartanDistr}
\mathcal{C}^{\ell}_{a^{\ell}}=(d\pi_{\ell,\ell-1})^{-1}(T_{a^{\ell-1}}S^{(\ell-1)})=\TT^{\ell}_{a^{\ell}} \oplus T^v_{a^{\ell}}J^{\ell}\,,
\end{equation}
where
$
T^vJ^{\ell}:=\ker (d\pi_{\ell,\ell-1})
$
is the \emph{vertical subbundle} of $TJ^{\ell}$. \par
Distribution $\mathcal{C}^{\ell}$ can be considered as a ``higher order contact structure'' \cite{KrasilshchikLychaginVinogradov:GJSNPDEq,MR2352610}  since, for $\ell=1$,   if $(u,x^i,u_i)$ is a chart on $J^1$, then
\begin{equation}\label{eq:contact.form.conormal}
\mathcal{C}:=\mathcal{C}^1=\ker(\theta)\,,\quad\text{where}\quad\theta=du-u_idx^i
\end{equation}
is a contact form. For $\ell>1$,   the    planes of the $\ell\Th$  order contact structure  $\mathcal{C}^{\ell}$ are the kernels of the following system of $1$--forms (Pfaff system):
$$
\theta = du- u_i dx^i\,,\,\,  \theta_{i_1\dots i_k} = du_{i_1\dots i_k} -  u_{i_1\dots i_k h} dx^h,  \,\,\,\, k<\ell,\,\,  i =1, \ldots, n\, .
$$

\subsection{The affine structure of the  bundles   $J^{\ell}\to J^{\ell-1}$ for $\ell\geq 2$}\label{secSezioneDedicataADmitri}

In this section we describe the affine structure of the bundles $\pi_{\ell,\ell-1}:J^{\ell}\to J^{\ell-1}$, $\ell\geq 2$.
In  order to state Proposition \ref{prop:affine.bundles} below, we need to introduce yet another bundle over $J^1$, tightly  related with the tautological bundle defined before: according to   Definition \ref{defTautBundle}, the tautological bundle $\TT:=\TT^1$ is the vector bundle over $J^1$ defined by
\begin{equation*}
\TT_{[S]_{\pp}^1}:=\TT^1_{[S]_{\pp}^1}=T_{\pp}S\,  .
\end{equation*}
\begin{definition}
The \emph{normal bundle} $\NN$ is the following line bundle over   $J^1$:
\begin{equation*}
\NN_{[S]_{\pp}^1}:=N_{\pp}S:= T_{\pp}M\big/T_{\pp}S\,.
\end{equation*}
\end{definition}
%
%
%
%\begin{remark}
To simplify notations, we denote by $\partial_u$ the equivalence class $\partial_u\,\mathrm{mod}\,\TT$.
%Since $\NN$ is a quotient of a (lifted) tangent bundle, its elements are equivalence classes modulo the tautological bundle, and as such they should be denoted accordingly:
%In particular, $\NN$ admits  as a (local) generator the equivalence class of $\partial_u$;  to not overload the notation, we will denote such a class simply by   $\partial_u$.
% A local generator of the normal bundle $\NN$, being a quotient bundle, is  $[\partial_u]$ (in the chart \eqref{eqn:jet.coordinates}). From now on, to not overload the notation, we denote such generator by $\partial_u$.
%In particular, $\partial_u$ is a (local) generator of the bundle $\NN$.
%\end{remark}
%

Next Lemma is well known (see for instance \cite{KrasilshchikLychaginVinogradov:GJSNPDEq,MR989588}): it describes the vertical subbundle $T^vJ^{\ell}$ of $J^{\ell}$ in terms of the bundles $\TT$ and $\NN$.
\begin{lemma}\label{lemma.ker.vector}
We have that
\begin{equation*}
T^vJ^{\ell} \simeq \pi_{\ell,1}^* (S^{\ell}\TT^* \otimes \NN)\, .
\end{equation*}
In local coordinates \eqref{eqn:jet.coordinates}, the above isomorphism  gives (up to a constant) the bijection
\begin{equation}\label{eqdxdxdxdu}
\frac{\partial}{\partial u_{i_1\cdots i_{\ell}}}\longleftrightarrow dx^{i_1}\odot\cdots\odot dx^{i_{\ell}}\otimes \partial_u\, ,
\end{equation}
where $\odot$ is the symmetric product.
\end{lemma}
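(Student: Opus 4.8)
The plan is to verify the isomorphism $T^vJ^\ell \simeq \pi_{\ell,1}^*(S^\ell\TT^*\otimes\NN)$ by working fibrewise over a point $a^\ell\in J^\ell$ lying over $a^1=[S]^1_\pp\in J^1$, and then checking that the identifications glue, i.e.\ that they are independent of the choices made. Since the vertical bundle $T^vJ^\ell=\ker(d\pi_{\ell,\ell-1})$ is, by the very definition of the jet coordinates \eqref{eqn:jet.coordinates}, spanned pointwise by the $\partial/\partial u_{i_1\cdots i_\ell}$ (these being the coordinates that appear at level $\ell$ but not at level $\ell-1$), and since the $u_{i_1\cdots i_\ell}$ are symmetric in their indices, the natural candidate map is precisely \eqref{eqdxdxdxdu}; the content of the lemma is that this map does not depend on the admissible chart $(u,\x)$.

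First I would fix an admissible chart $(u,\x)$ for $S$ near $\pp$ and write out \eqref{eqdxdxdxdu} explicitly, noting that $\{dx^{i_1}\odot\cdots\odot dx^{i_\ell}\}_{i_1\le\cdots\le i_\ell}$ is a basis of $S^\ell T^*_\pp S = S^\ell\TT^*_{a^1}$ once we restrict the $dx^i$ to $T_\pp S$ (here one uses that on $S_f$ the tangent space is spanned by $\partial_{x^i}+\partial_i f\,\partial_u$, so the $dx^i$ form the dual basis), and that $\partial_u\ \mathrm{mod}\ \TT$ is a generator of $\NN_{a^1}=T_\pp M/T_\pp S$. This gives a linear isomorphism of the correct rank $\binom{n+\ell-1}{\ell}$ at each point. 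Then I would perform the change-of-coordinates computation: under a transformation $(u,\x)\mapsto(\tilde u,\tilde\x)$ preserving admissibility, the transformation law of the top jet coordinate $u_{i_1\cdots i_\ell}$ differs from a ``tensorial'' law only by terms involving lower-order jet coordinates $u_J$ with $|J|<\ell$ (these come from iterated chain-rule / Fa\`a di Bruno contributions); crucially those lower-order terms do not involve $u_{i_1\cdots i_\ell}$ themselves, so modulo $T^hJ^\ell$ — equivalently, after projecting to $T^vJ^\ell$ — the top coordinates transform exactly like $S^\ell\TT^*\otimes\NN$. This is the computation already recorded in the cited references \cite{KrasilshchikLychaginVinogradov:GJSNPDEq,MR989588}, so I would cite it rather than reproduce it in full, and simply indicate the structure of the argument.

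The main obstacle — really the only subtle point — is pinning down that the lower-order corrections in the transformation law of $u_{i_1\cdots i_\ell}$ genuinely drop out when one passes to the vertical quotient, and simultaneously that the induced action on $\partial_u\ \mathrm{mod}\ \TT$ matches the transformation of $\NN$ while the action on the $dx^i\ \mathrm{mod}$ (annihilator of $\TT$) matches that of $\TT^*$. Concretely: if $\tilde u_{j_1\cdots j_\ell} = \sum \frac{\partial \tilde u}{\partial u}\cdot J^{-1}\cdots + (\text{lower order in }u_J)$, then $\partial/\partial u_{i_1\cdots i_\ell} = \sum (\partial\tilde u_{j_1\cdots j_\ell}/\partial u_{i_1\cdots i_\ell})\,\partial/\partial\tilde u_{j_1\cdots j_\ell}$, and only the top-order (purely Jacobian) part of the coefficient survives because the lower-order terms are independent of $u_{i_1\cdots i_\ell}$; that surviving part is exactly the matrix of $S^\ell(dx\mapsto d\tilde x)\otimes(\partial_u\mapsto\partial_{\tilde u}\ \mathrm{mod}\ \TT)$. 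Finally I would remark that the overall nonzero scalar appearing in \eqref{eqdxdxdxdu} (``up to a constant'') comes from the multinomial factor relating the symmetrized product $dx^{i_1}\odot\cdots\odot dx^{i_\ell}$ to repeated indices, and that it plays no role in the statement. Assembling these local identifications, their compatibility under coordinate change yields the global bundle isomorphism, completing the proof.
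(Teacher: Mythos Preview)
Your argument is correct and is the standard one: identify the fibre via the top jet coordinates, then check that under a change of admissible chart the non--tensorial Fa\`a di Bruno terms in the transformation law of $u_{i_1\cdots i_\ell}$ involve only lower--order jets and hence vanish on $T^vJ^\ell$, leaving precisely the $S^\ell\TT^*\otimes\NN$ transformation. The paper, however, gives no proof at all of this lemma: it is stated as ``well known'' with a citation to \cite{KrasilshchikLychaginVinogradov:GJSNPDEq,MR989588} and is used as a black box. So you are supplying an argument where the paper supplies none; your sketch is exactly the computation those references carry out, and there is nothing to correct.
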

%\begin{proof}
%We prove the lemma for $l=1$ as for arbitrary $l$ the reasoning is practically the same. So, let $l=1$.
%Take a vector $v\in \ker (d\pi_{1,0})_{[S]^1_{\pp}}$. Consider a curve $\gamma(t)$ in the fibre $\pi_{1,0}^{-1}(\pp)$ such that $\gamma(0)=[S]^1_{\pp}$ and $\dot{\gamma}(0)=v$. Recalling the natural section $\iota:J^1\to\TT^*\otimes\pi^*_{1,0}( TJ^0)$ (see \eqref{eqn:jet.inclusion}), the desired isomorphism is realized by the map $v\mapsto\frac{d}{dt}\big|_0\iota(\gamma(t))$. In Darboux coordinates $(u,x^i,u_i)$, $v=v_i\partial_{u_i}$, $\gamma(t)=(u_1(t),\dots,u_n(t))$ and then $\dot{u}_i(0)=v_i$. Thus, taking into account also formula \eqref{eqn:dxi.Dxi},
%\begin{equation*}
%\left.\frac{d}{dt}\right|_0\iota(\gamma(t))=
%\left.\frac{d}{dt}\right|_0dx^i\otimes(\partial_{x^i}+u_i(t)\partial_u)
%=v_i(dx^i\otimes\partial_{u})\,.%\simeq v_i\partial_{u_i}\,.
%\end{equation*}
%\end{proof}
The next proposition  is also well known (\cite{KrasilshchikLychaginVinogradov:GJSNPDEq,MR989588}) and it is crucial for our purposes.
\begin{proposition}\label{prop:affine.bundles}
For $\ell\geq 2$ the bundles $J^{\ell}\to J^{\ell-1}$ are affine bundles modeled by the vector bundles $\pi_{\ell-1,1}^* (S^{\ell}\TT^* \otimes \NN)$. In particular, once  a  chart $(u,\x)$ has been fixed, the choice of a point $[S]^{\ell}_{\pp}$ (the origin) defines the identification  $J^{\ell}_{[S]^{\ell-1}_{\pp}}$ with $S^{\ell}T^*_{\pp} S$.
\end{proposition}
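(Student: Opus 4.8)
\textbf{Plan for the proof of Proposition \ref{prop:affine.bundles}.}
The plan is to work entirely in the jet coordinates \eqref{eqn:jet.coordinates} and to exhibit the candidate affine structure explicitly, then argue invariance of that structure under change of chart. First I would fix an admissible chart $(u,\x)$ and recall that a point of $J^{\ell}_{[S]^{\ell-1}_{\pp}}$ lying over a fixed $(\ell-1)$--jet is determined precisely by the top-order symmetric array $(u_{i_1\cdots i_{\ell}})$, since all lower coordinates are pinned down by the base point. Thus set-theoretically the fibre is identified with the space of symmetric $\ell$--arrays, i.e.\ with $S^{\ell}T^*_{\pp}S$ via \eqref{eqdxdxdxdu} of Lemma \ref{lemma.ker.vector}; this already gives the ``in particular'' clause once we check it is chart-independent. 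The content of the proposition is that (i) the \emph{difference} of two such fibre points transforms linearly (not merely affinely) under admissible coordinate changes, with transformation law exactly that of the vector bundle $S^{\ell}\TT^*\otimes\NN$, and (ii) this is compatible over varying base points in $J^{\ell-1}$, so that we get a genuine affine bundle and not just a fibrewise affine structure.

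The key computation is the transformation rule for the top coordinates $u_{i_1\cdots i_{\ell}}$ under a change of admissible chart $(u,\x)\rightsquigarrow(\tilde u,\tilde\x)$, where $\tilde u = \tilde u(u,\x)$ and $\tilde x^j = \tilde x^j(u,\x)$. Restricting to a lifted hypersurface $S_f^{(\ell)}$ and using the total derivatives \eqref{eqn:total.derivatives}, one differentiates $\ell$ times the relation expressing $\tilde u$ as a function of $\tilde\x$ along $S$. The resulting formula for $\tilde u_{j_1\cdots j_{\ell}}$ is a polynomial in the $u$-coordinates whose \emph{top-order part} is linear in $(u_{i_1\cdots i_{\ell}})$: schematically $\tilde u_{j_1\cdots j_{\ell}} = c\cdot \frac{\partial x^{i_1}}{\partial \tilde x^{j_1}}\cdots \frac{\partial x^{i_{\ell}}}{\partial \tilde x^{j_{\ell}}}\, u_{i_1\cdots i_{\ell}} + (\text{terms in } u_{\bullet} \text{ of order} <\ell)$, where the scalar factor $c$ is exactly the cocycle of the normal line bundle $\NN$ (the Jacobian factor by which $\partial_u$ changes modulo $\TT$), and the Jacobian matrices $\partial x^i/\partial\tilde x^j$ are the cocycle of $\TT^*$ — here $\partial/\partial\tilde x^j$ means the tangential derivative along $S$, i.e.\ the truncated total derivative. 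Hence the \emph{affine} part of the transformation (the terms of order $<\ell$) cancels when one subtracts two points of the same fibre, leaving precisely the linear $S^{\ell}\TT^*\otimes\NN$-cocycle acting on the difference; this is the statement that $\pi_{\ell,\ell-1}$ is an affine bundle modeled on $\pi_{\ell-1,1}^*(S^{\ell}\TT^*\otimes\NN)$. Alternatively, and perhaps more cleanly, one may identify the model vector bundle directly with $T^vJ^{\ell}$ via Lemma \ref{lemma.ker.vector}: the fibrewise translations are the flows of vertical vector fields $\partial/\partial u_{i_1\cdots i_{\ell}}$, and $T^vJ^{\ell}\cong\pi_{\ell,1}^*(S^{\ell}\TT^*\otimes\NN)$, which pulls back from $J^{\ell-1}$ because the isomorphism \eqref{eqdxdxdxdu} only involves data at the base point $\pp$ and the $1$-jet (the splitting $TM = \TT\oplus\NN$), not the higher jet.

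I expect the main obstacle to be bookkeeping: verifying carefully that in the change-of-chart formula for $u_{i_1\cdots i_{\ell}}$ the genuinely top-order term is linear and carries exactly the claimed cocycle, and that all the correction terms involve only coordinates $u_{i_1\cdots i_m}$ with $m<\ell$ (so that they depend only on the base point $[S]^{\ell-1}_{\pp}$ and therefore drop out of fibre differences). This is a standard but slightly delicate induction on $\ell$ using the total derivatives \eqref{eqn:total.derivatives}; the case $\ell=2$ already shows the mechanism — the second derivatives transform with a Hessian-type inhomogeneity plus the linear $\partial x^i/\partial\tilde x^j$-tensorial part — and the inductive step differentiates once more, observing that differentiating an order-$(\ell-1)$-linear term by a total derivative produces one order-$\ell$-linear term (the one we want) plus strictly lower-order clutter. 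Once this transformation law is in hand, the affine-bundle axioms (free transitive fibrewise action of the model vector bundle, local triviality) are immediate from the coordinate description, and chart-independence of the identification $J^{\ell}_{[S]^{\ell-1}_{\pp}}\cong S^{\ell}T^*_{\pp}S$ follows because the two cocycles assemble to that of $S^{\ell}\TT^*\otimes\NN$.
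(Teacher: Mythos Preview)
Your proposal is correct but takes a genuinely different route from the paper's sketch. The paper directly exhibits the free transitive action of the model vector bundle on the fibre: given $v = v_{i_1\cdots i_{\ell}}\, dx^{i_1}\cdots dx^{i_{\ell}}\otimes\partial_u$, it acts on $[S_f]^{\ell}_{\pp}$ by replacing $f$ with $g(\x)=f(\x)+\tfrac{1}{\ell!}\,v_{i_1\cdots i_{\ell}}x^{i_1}\cdots x^{i_{\ell}}$, which manifestly leaves the $(\ell-1)$--jet untouched and shifts the top-order derivatives by $v_{i_1\cdots i_{\ell}}$. Your approach instead computes the transition functions of $\pi_{\ell,\ell-1}$ under a change of admissible chart and argues, by induction on $\ell$ via the total derivatives, that they are affine in the top coordinates with linear part exactly the cocycle of $S^{\ell}\TT^*\otimes\NN$. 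The paper's argument is shorter and more transparent as a description of the action (and is explicitly presented as a sketch, deferring details to the cited references), whereas your transition-function computation, once the bookkeeping is completed, simultaneously establishes the global affine-bundle structure and pins down the model bundle without a separate appeal to Lemma~\ref{lemma.ker.vector}. Your ``alternative'' via vertical flows of $\partial/\partial u_{i_1\cdots i_{\ell}}$ is in fact the one closest in spirit to what the paper actually does.
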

Below we give a sketch of the proof based on the action of a generic element $v$ of $\big(\pi_{\ell-1,1}^*(S^{\ell}\TT^* \otimes \NN)\big)_{[S]^{\ell-1}_{\pp}}$ on the fibre $J^{\ell}_{[S]^{\ell-1}_{\pp}}$ (for more details see for instance \cite{MR989588}).
Without loss of generality, we can choose $\pp=o=(0,\boldsymbol{0})$ and a chart $(u,\x)$ admissible by $S$, so that $S=\{u=f(\x)\}=S_f$.
The aforementioned element can be written as
\begin{equation*}
v=v_{i_1\cdots i_{\ell}}\,dx^{i_1}\odot\cdots\odot dx^{i_{\ell}}\otimes \partial_u \in S^{\ell}(T_{\pp}^*S)\otimes\NN_{[S]^1_{\pp}}
\end{equation*}
(see Lemma \ref{lemma.ker.vector}); its   action
on $[S]^{\ell}_{\pp}=[S_f]^{\ell}_{\pp}$ is given by  $v:[S_f]^{\ell}_{\pp} \to [S_g]^{\ell}_{\pp}$ where
\begin{equation*}
g(\x)=f(\x)+\frac{1}{\ell !}v_{i_1\cdots i_{\ell}}x^{i_1}\cdot\cdots\cdot x^{i_{\ell}}\, ,
\end{equation*}
i.e., the $\ell$--order derivatives $f_{i_1\cdots i_{\ell}}$ are sent to $g_{i_1\cdots i_{\ell}}=f_{i_1\cdots i_{\ell}}+v_{i_1\cdots i_{\ell}}$.

\begin{warning}
 From now on, the symmetric product $dx^i\odot dx^j$ (resp. $\partial_{x^i}\odot\partial_{x^j}$) will be denoted simply by $dx^idx^j$ (resp. $\partial_{x^i}\partial_{x^j}$).
%with the symbol $\odot$ of the symmetric product $dx^{i_1}\odot\cdots\odot dx^{i_{\ell}}$ will be denoted simply by $dx^{i_1}\cdots dx^{i_{\ell}}$.
\end{warning}

\section{A  general construction  of  $G$--invariant  PDEs     on  a  homogeneous  manifold  $M = G/H$} \label{secConstructingGinvPDEs}

As above, let $M = G/H  =  G\cdot o$, $o\in M$, be  an  $(n+1)$--dimensional  homogeneous manifold. Recall (see Section \ref{sec:descriptions}) that $G$ acts on each  jet space $J^\ell=J^{\ell}(n,M)$. We recall the following definitions.

%We  are interested in the  description  of   $G$--invariant   PDEs  for  hypersurfaces  in  $M$. Recall  the  following  definition  \cite{K-L-R}
\begin{definition}
A  \emph{system}  of  $m$ PDEs of order $k$ is  an $m$--codimensional submanifold   $\E  \subset  J^k$. A  \emph{solution}  of    the  system   $\E$  is  a  hypersurface   $S \subset M$    such  that $[S]^k_{\pp} \in \E$  for  all $\pp \in  S$. The system $\E$ is called   \emph{$G$--invariant} if  $G \cdot\E = \E$.
 \end{definition}
The  aim of  this  section  is  to  reduce  the  problem of  describing    $G$--invariant scalar PDEs  (i.e., when $m=1$) of order $k$,  that  is, $G$--invariant  hypersurfaces  in  $J^k$, to the problem of  describing     hypersurfaces   in a certain  vector  space,  invariant  under   the linear  action   of   the   stability  subgroup $H^{(k-1)}$ of  the point  $o^{k-1}\in  J^{k-1}$.\par
%where   $S$  is  an appropriate initial hypersurface (a \emph{fiducial hypersurface, see below) through a choosen point  $o = e\cdot H$ (where $e$ is the neutral element of $G$).
%
Below we give some definitions together with some preliminary lemmas that are important to state the main Theorem \ref{thMAIN1}.
%
%
%describe  a  class  of homogeneous manifolds  $M =G/H$ for which the method  for the construction  of  $G$--invariant PDEs  described below Theorem \ref{thMAIN1}  works.
%
\begin{definition}\label{def.Dmitri}
A  homogeneous  manifold   $M = G/H$  is called \emph{$k$--admissible}  for  $k \geq 2$ if assumptions (A1) and (A2) of Section \ref{sec:descriptions} are satisfied.
\end{definition}
 %there exists  a  point $o^k \in \pi_{k,k-1}^{-1}(o^{k-1})$ such  that  the orbit   $$\mathcal{R}:= G \cdot o^{k}  \subset J^k$$ with   the natural projection   $\check{\pi}_{k,k-1} : \mathcal{R} \to \check{J}^{k-1}$ is a proper affine subbundle (of
%the restriction to $\check{J }^{k-1}$) of the affine bundle $\pi_{k,k-1}$. \\
%the   affine  bundle  $\pi_{k,k-1} : \check{J}^k \to  \check{J}^{k-1}$ ,  which is  the  restriction  of  the  bundle  $J^k \to J^{k-1}$   to  the orbit  $ \check{J }^{k-1}$.
\begin{definition}\label{defFidHyp}
A  hypersurface $S \subset  M$  through   the point  $o$ that is homogeneous  with respect to a  subgroup  of $G$ for which $o^{k-1}=[S]^{k-1}_o$ and $o^k=[S]^k_o$ satisfy (A1) and (A2) of Section \ref{sec:descriptions} is  called  a \emph{fiducial hypersurface}.
\end{definition}
\begin{remark}
It is worth stressing that the main theoretical result, Theorem \ref{thMAIN1} below,  does not require  the existence of a fiducial hypersurface, whereas   its applications to the particular cases discussed later in Sections \ref{secCasEucl} and \ref{secCasConf}, become geometrically more trasparent thanks to an obvious choice of a fiducial hypersurface; in particular, the latter allows constructing  a preferred coordinate system in which the so--obtained invariant PDEs look particularly simple. The authors did not deepen the problem of existence of a fiducial hypersurface for \emph{all} $k$--admissible homogeneous manifolds $M=G/H$.
\end{remark}
Below we state an elementary lemma concerning affine subgroups that are semidirect product of their  linear (canonically associated) group and a translational one.\par
Let $V$ be a vector space, treated as an affine space  with origin $o$. Let $H$ be a subgroup of $\Aff(V)=V\rtimes\GL(V)$. Assume that $W:=H\cdot o$ is a vector subspace of $V$ and that the corresponding group of translations $T_W$ is a subgroup of $H$. Then
\begin{equation}\label{eqn:H.T.L}
H=T_W\rtimes L_H\, ,
\end{equation}
where $L_H$ is the linear subgroup of the stabilizer of the origin $o$. Since $T_W$ is a normal subgroup, we have that $L_H\cdot W=W$. Denote by $U$ a subspace complementary to $W$, that is
\begin{equation}\label{eqn:VWU}
V=W\oplus U\,,
\end{equation}
so that the natural projection $p:V\to V/W$ defines an identification  $p|_U:U\to V/W$. In view of \eqref{eqn:H.T.L},  an element $h\in H$ can be uniquely presented as
\begin{equation}\label{eqn:h.T.wh}
h=T_{w(h)}\cdot L_h\in H\, ,
\end{equation}
  so that its induced action
 on $V/W$ corresponds to the linear action given by  $h:u\to \overline{L}_h\cdot u$, where $L_h$, in terms of decomposition \eqref{eqn:VWU}, is
\begin{equation}\label{eqn:Lh}
L_h=
\left(
\begin{array}{cc}
* & *
\\
0 & \overline{L}_h
\end{array}
\right)\,.
\end{equation}

\begin{lemma}\label{lemma.aff.banale}
Let $H$ as in \eqref{eqn:H.T.L} and $V$ as in \eqref{eqn:VWU}. Then there exists a $1$--$1$ correspondence between  $\overline{L}_H$--invariant hypersurfaces $\overline{\Sigma}\subset U=V/W$ and (cylindrical) $H$--invariant hypersurfaces $\Sigma=W+\overline{\Sigma}$ in $V$.
\end{lemma}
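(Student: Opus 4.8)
The claim is essentially a bookkeeping statement about the splitting $H = T_W \rtimes L_H$ and the decomposition $V = W \oplus U$, so the plan is to make the correspondence $\overline{\Sigma} \mapsto \Sigma := W + \overline{\Sigma}$ and its inverse $\Sigma \mapsto p(\Sigma) = \Sigma/W$ explicit and check that each sends an invariant hypersurface to an invariant hypersurface of the right dimension. First I would observe that $W + \overline{\Sigma} = p^{-1}(p|_U(\overline{\Sigma}))$, so that $\Sigma$ is the total space of a vector subbundle (a ``cylinder'') over $\overline{\Sigma} \subset U \cong V/W$ with fibre $W$; since $\overline{\Sigma}$ has codimension $1$ in $U$ and $\dim V = \dim W + \dim U$, the set $\Sigma$ is a submanifold of $V$ of codimension $1$, i.e.\ a hypersurface, and this construction is manifestly injective with inverse given by intersecting with $U$ (equivalently, applying the projection $p$). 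Conversely, if $\Sigma \subset V$ is an $H$--invariant hypersurface that is cylindrical in the sense $\Sigma = W + \Sigma$ (equivalently $\Sigma = p^{-1}(p(\Sigma))$), then $p(\Sigma)$ is a hypersurface in $V/W$ and these two assignments are mutually inverse; so the content is only to match up the two invariance conditions.

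\textbf{Matching the invariance conditions.} The key step is to translate $H$--invariance of $\Sigma = W + \overline{\Sigma}$ into $\overline{L}_H$--invariance of $\overline{\Sigma}$, using \eqref{eqn:h.T.wh} and \eqref{eqn:Lh}. Take $h = T_{w(h)} \cdot L_h \in H$. Since translations by elements of $W$ preserve $W + \overline{\Sigma}$ trivially, $h \cdot \Sigma = \Sigma$ is equivalent to $L_h \cdot \Sigma = \Sigma$ for all $h$, i.e.\ to $L_H$--invariance of $\Sigma$. Now write $L_h$ in block form \eqref{eqn:Lh} with respect to $V = W \oplus U$: a vector $w + u$ with $w \in W$, $u \in U$ is sent to $(L_h w + B_h u) + \overline{L}_h u$ for some linear map $B_h : U \to W$ (the upper-right block) and with $\overline{L}_h$ the induced map on $U \cong V/W$. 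Hence $L_h \cdot (W + \overline{\Sigma}) = W + \overline{L}_h \cdot \overline{\Sigma}$, because the $W$--component is absorbed and $B_h u \in W$ is absorbed as well. Therefore $L_h$ preserves $\Sigma$ for all $h$ if and only if $\overline{L}_h$ preserves $\overline{\Sigma}$ for all $h$, which is exactly $\overline{L}_H$--invariance of $\overline{\Sigma} \subset U = V/W$. This establishes that the correspondence $\overline{\Sigma} \leftrightarrow \Sigma$ restricts to a bijection between the two classes of invariant hypersurfaces.

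\textbf{Expected difficulties.} There is essentially no deep obstacle here; the statement is ``elementary'' as the authors themselves note. The one point that deserves care is the claim that every $H$--invariant hypersurface arising in this way is automatically cylindrical over $V/W$ --- but in the stated correspondence one only asserts the bijection between $\overline{L}_H$--invariant hypersurfaces and cylindrical $H$--invariant ones, so no surjectivity onto all $H$--invariant hypersurfaces is claimed, and the argument above suffices. A second minor point is to make sure the off-diagonal block $B_h$ really maps $U$ into $W$, which is forced by the block-triangular shape \eqref{eqn:Lh}: this shape is in turn a consequence of $L_h \cdot W = W$, which holds because $T_W \trianglelefteq H$ (so $L_h$ normalizes $T_W$ and hence preserves $W$). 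I would therefore organise the write-up as: (i) recall $L_h W = W$ and deduce \eqref{eqn:Lh}; (ii) show $h \cdot (W + \overline{\Sigma}) = W + \overline{L}_h \cdot \overline{\Sigma}$; (iii) conclude the equivalence of invariance conditions and that $\overline{\Sigma} \mapsto W + \overline{\Sigma}$, $\Sigma \mapsto p(\Sigma)$ are mutually inverse, hence the desired $1$--$1$ correspondence.
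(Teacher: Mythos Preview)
Your argument is correct and follows essentially the same route as the paper: decompose $h = T_{w(h)}\cdot L_h$, use the block-triangular form \eqref{eqn:Lh} to compute the action on $W+\overline{\Sigma}$, and read off the equivalence of invariance conditions.

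One small point worth noting: in your ``Expected difficulties'' you interpret the parenthetical \emph{(cylindrical)} in the statement as restricting the class of $H$--invariant hypersurfaces under consideration, and you explicitly decline to prove surjectivity onto all $H$--invariant hypersurfaces. The paper's converse direction actually shows that \emph{every} $H$--invariant hypersurface $\Sigma\subset V$ is automatically cylindrical: since $T_W\subset H$, one has $T_W\cdot\Sigma = W+\Sigma\subset\Sigma$, hence $\Sigma = W+(\Sigma\cap U)$. So ``(cylindrical)'' is a conclusion, not a hypothesis, and the correspondence is with all $H$--invariant hypersurfaces. This is a one-line addition to your argument and worth including.
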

\begin{proof}
Let $\overline{\Sigma}\subset U$ be a $\overline{L}_H$--invariant hypersurface. Then $\Sigma=W+\overline{\Sigma}$ is a  $H$--invariant hypersurface of $V$ since, for each $w+u\in \Sigma=W+\overline{\Sigma}$, in view of \eqref{eqn:h.T.wh} and \eqref{eqn:Lh},
$$
h(w+u)=L_h(T_{w(h)}(w+u))=L_h(w+w(h)+u)=L_h(w+w(h))+\overline{L}_h(u)\in W+\overline{\Sigma}\, .
$$
Conversely, if $\Sigma\subset V=W\oplus U$ is an $H$--invariant hypersurface, then  $T_W\cdot\Sigma=W+\Sigma\subset\Sigma$, i.e., $\Sigma$ is a cylindrical hypersurface and the quotient $\overline{\Sigma}=\Sigma\cap U$ is an $\overline{L}_H$--invariant hypersurface in $U=V/W$.
\end{proof}
Recall now that, if $S$ is a fiducial hypersurface in the sense of Definition \ref{defFidHyp}, then
$$
o^{\ell}:= [S]^{\ell}_o \in J^{\ell}
$$
plays the role of the origin in $J^{\ell}$. Furthermore, we have the following identification (see Proposition \ref{prop:affine.bundles}):
$$
J^{\ell}_{o^{\ell-1}}= S^{\ell}(T_{o}^*S)\otimes N_oS\,.
$$
If we represent  the  fiducial hypersurface $S$ as a graph $S_f$, then we can write
%the following identification
 (see again Proposition \ref{prop:affine.bundles}):
\begin{equation}\label{eqn:identification}
J^{\ell}_{o^{\ell-1}}= S^{\ell}(T_{o}^*S_f)\,.
\end{equation}
From now on,  we will  use  this identification.
%
%\begin{remark}
%\textbf{By assumption  the point $o^{k-1}   \in \check{J}^{k-1}$ , but in what follows it will be    simpler (and correct) if  we write  ${J}^k_{o^{k-1}}  = V^k$  instead of $\check{J}^k_{o^{k-1}}$   to not overload the notation.}
%\end{remark}

%Thus, we may identify   the  bundle  $\pi_{k,k-1} : J^k \to J^{k-1}$ ,  restricted  to  the orbit   $\check{J}^{k-1}$   with  the homogeneous    affine  bundle    $G \times_{\tau(H^{(k-1)})} \check{J}^{k-1}$,  but  not    vector  bundle.\\
% ,  which  acts  simply transitively.\\
%

%Then   the  bundle  $  \pi_{k,k-1} : \check{J}^k  \to \check{J}^{k-1}$ (where $\check{J}^k  = \pi^{-1}_k (\check{J}^{k-1})$)
% is identified   with   the  homogeneous   affine  bundle
% $$     \pi_{k,k-1} : \check{J}^{k}=  G \times_{\tau_{H^{(k-1)}}}  V^{k}  \to  G/H^{(k-1)} $$
%associated  to  the  affine  action  $\tau $ of  $   H^{(k-1)}$   in  $ V^{k} $.\\
For  any  $h \in  H^{(\ell-1)}$ we   may   decompose the  affine   transformation   $\tau(h)\in \Aff(J^{\ell}_{o^{\ell-1}})$  (see \eqref{eqn:tau.tauk.Dmitri}) into the product of its  linear  part  $A_h \in  \GL(S^{\ell}(T_{o}^*S_f))$, which is the stabilizer of the point $o^{\ell}$, and  the   translation  $T_h$ along the vector $\tau(h)(o^{\ell})$, i.e.,  $$\tau(h)= T_h \cdot A_h\, .$$
%such  that  $  T_h o^{k} :=  \tau(h)o  \in  V^k$.
%$$
%\tau(H^{(k-1)})  =  \{  \tau= T_h \cdot A_h ,\,  h \in  H^{(k-1)}\}\,.
%$$
%Hence the stabilizer  of  the point  $o^k$   is identified  with  the linear group
%$\{  \tau(h) = A_h  ,  \,  T_h = \id   \}$.
%
Now we assume that the homogeneous manifold $M=G/H$ is $k$--admissible.
Thus, condition (A2) shows that
$$
W^k = \tau(H^{(k-1)}) \cdot o^k  = \{  T_h \cdot o^k, \,  h \in  H^{(k-1)} \}
$$
is  a vector  subspace of  $J^k_{o^{k-1}}$ and, furthermore, that any element $\tau(h)\in\tau(H^{(k-1)})$ can be decomposed as follows (see \eqref{eqn:h.T.wh}):
$$
\tau(h)=T_{w(h)}\cdot L_h\,,\quad T_{w(h)}\,,\,\,L_h\in\tau(H^{(k-1)})\, .
$$
Hence,
\begin{equation}\label{eqn:decomp.tau.H}
\tau(H^{(k-1)})=T_{W^k}\rtimes L_{H^{(k-1)}}\, ,
\end{equation}
where $L_{H^{(k-1)}}$ is the stabilizer of $o^k$.
Applying Lemma \ref{lemma.aff.banale} to the affine subgroup $\tau(H^{(k-1)})\subset \Aff(J^k_{o^{k-1}})$ we get the following corollary.
\begin{corollary}\label{cor.1.1.corresp}
Let $M=G/H$ be a $k$--admissible homogeneous manifold.
Then there exists a $1$--$1$ correspondence between $L_{H^{(k-1)}}$--invariant hypersurfaces  $\overline{\Sigma}\subset J^k_{o^{k-1}}/W^k$ and (cylindrical) $\tau(H^{(k-1)})$--invariant hypersurfaces $\Sigma=p^{-1}(\overline{\Sigma})\subset J^k_{o^{k-1}}$,  where
\begin{equation}\label{eqn:p.proj}
p: J^k_{o^{k-1}}\to J^k_{o^{k-1}}/W^k
\end{equation}
is the natural projection.
\end{corollary}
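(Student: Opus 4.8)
The plan is to recognize Corollary \ref{cor.1.1.corresp} as a direct specialization of Lemma \ref{lemma.aff.banale}, so the proof amounts to verifying that the hypotheses of the lemma are met in the present setting. First I would set $V = J^k_{o^{k-1}}$, viewed as a vector space with origin $o^k$ by the identification \eqref{eqn:identification} (with $\ell = k$), and take $H = \tau(H^{(k-1)}) \subset \Aff(V)$, which is legitimate because the $G$-action preserves the affine structure of the fibre and $\tau$ in \eqref{eqn:tau.tauk.Dmitri} is precisely the resulting affine representation of the stabilizer. The orbit $H \cdot o = \tau(H^{(k-1)}) \cdot o^k = W^k$ is a vector subspace by assumption (A2), and the associated translation group $T_{W^k}$ is contained in $\tau(H^{(k-1)})$, again by (A2); these are exactly the two standing hypotheses ``$W := H\cdot o$ is a vector subspace'' and ``$T_W \subseteq H$'' required before Lemma \ref{lemma.aff.banale}.

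Next I would invoke the semidirect-product decomposition: the discussion culminating in \eqref{eqn:decomp.tau.H} already records $\tau(H^{(k-1)}) = T_{W^k} \rtimes L_{H^{(k-1)}}$ with $L_{H^{(k-1)}}$ the linear stabilizer of $o^k$, which is the instance of \eqref{eqn:H.T.L} with $L_H = L_{H^{(k-1)}}$. Choosing a complementary subspace $U$ with $V = W^k \oplus U$ and using $p|_U : U \xrightarrow{\sim} V/W^k = J^k_{o^{k-1}}/W^k$, the induced linear action of $L_{H^{(k-1)}}$ on the quotient is $\overline{L}_{H^{(k-1)}}$ in the notation of \eqref{eqn:Lh}. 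Lemma \ref{lemma.aff.banale} then yields a bijection between $\overline{L}_{H^{(k-1)}}$-invariant hypersurfaces $\overline{\Sigma} \subset V/W^k$ and $\tau(H^{(k-1)})$-invariant cylindrical hypersurfaces $\Sigma = W^k + \overline{\Sigma} = p^{-1}(\overline{\Sigma})$ in $V$, which is the assertion once one observes that ``$L_{H^{(k-1)}}$-invariant'' and ``$\overline{L}_{H^{(k-1)}}$-invariant'' mean the same thing for subsets of the quotient (the translational part $T_{W^k}$ acts trivially on $V/W^k$, so the $\tau(H^{(k-1)})$-action on the quotient factors through $\overline{L}_{H^{(k-1)}}$).

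There is essentially no obstacle here; the only point deserving a line of care is the identification of the equation $\Sigma = W^k + \overline{\Sigma}$ from the lemma with $\Sigma = p^{-1}(\overline{\Sigma})$ in the statement, which is immediate since $p^{-1}(\overline{\Sigma}) = W^k + (p|_U)^{-1}(\overline{\Sigma})$ under $V = W^k \oplus U$, and one should also note that the correspondence does not depend on the auxiliary choice of complement $U$ because it can be phrased purely in terms of $p$. Accordingly, the proof is simply: apply Lemma \ref{lemma.aff.banale} to the affine subgroup $\tau(H^{(k-1)}) \subset \Aff(J^k_{o^{k-1}})$, whose hypotheses hold by $k$-admissibility (assumption (A2)) together with \eqref{eqn:decomp.tau.H}, and translate the conclusion through the projection $p$ of \eqref{eqn:p.proj}.
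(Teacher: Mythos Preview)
Your proposal is correct and follows exactly the paper's own approach: the paper simply states that the corollary is obtained by applying Lemma~\ref{lemma.aff.banale} to the affine subgroup $\tau(H^{(k-1)})\subset \Aff(J^k_{o^{k-1}})$, which is precisely what you do (with the hypotheses verified via assumption~(A2) and decomposition~\eqref{eqn:decomp.tau.H}). Your additional remarks on identifying $W^k+\overline{\Sigma}$ with $p^{-1}(\overline{\Sigma})$ and on independence from the choice of complement $U$ are fine elaborations but not required.
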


%\textbf{
%due to condition (A2),   the orbit
%$$\mathcal{R}=  G\cdot o^{k}   = G \times_{\tau(H^{(k-1)})} W^k  \to \check{J}^{k-1} =  G/H^{(k-1)}$$
%with  the  natural projection  is  an  affine  subbundle  of   the  affine  bundle  $\pi_{k,k-1} : \check{J}^k \to  \check{J}^{k-1}$.
%Here
%$$
%G \times_{\tau(H^{(k-1)})} W^k:= (G\times W^k)\big/\tau(H^{(k-1)})
%$$
%is the orbit space  of the diagonal action of the group $H^{(k-1)}$ on $G\times W^k$.
%Obviously,  the  induced action   $\overline{\tau} $  of $H^{(k-1)}$  in  the vector  space
%$$
%V^k:=J^k_{o^{k-1}}/W^k
%$$
%is linear:
%$$
%\overline{\tau}:H^{(k-1)}\to\GL(V^k)\,.
%$$
%Clearly, any $\overline{\tau}( H^{(k-1)})$--invariant hypersurface $\overline{\Sigma} \subset  \overline{V}^k$ in  the  vector  space $\overline{V}^k$   defines  a $\tau(H^{(k-1)})$--invariant  cylindrical hypersurface
%$$
%\Sigma := p^{-1}(\overline{\Sigma})\subset V^k\,,
%$$
%where
%\begin{equation}\label{eqn:p.natural}
%p : V^k \to  \overline V^k
%\end{equation}
%is  the natural  projection.
%}

% \textbf{This  gives a  bijection between  invariant hypersurfaces in   $\overline V^k$  and  $V^k$}. \\
%\textbf{Similarly,  using    the  action of  $G$  in   the  affine  bundle  $\check{J}^k = G \times_{\tau(H^{(k-1)})} V^k \to G/H^{(k-1)} $   we   may  extend   a $\tau(H^{(k-1)})$-invariant  hypersurface  $\Sigma\subset J^k_{o^{k-1}} =  V^k$  to a
 %$G$--invariant  hypersurface $G\cdot\Sigma \subset J^k$. This  follows  from  the following lemma.}

\begin{lemma}\label{lemLemmaLemmino}
Let  $\pi : P \longrightarrow B$  be  a  bundle. Assume that a Lie group $G$ of automorphisms  of  $\pi$,  such  that $B =  G/H$,  acts  transitively on  $B$,  where $H$ is  the  stabilizer of a point  $o \in B$.  Then:
\begin{itemize}
%\item[i)] a natural  $1$--$1$$  correspondence  between $H$--invariant   functions  $F$ on $P_o:=\pi^{-1}(o)$  and $G$--invariant  functions  $\widehat{F}$ on  $P$ (where  $\widehat{F}(gy) =F(y)$ for  $y \in  \pi^{-1}(o)$   and     $g \in  G$);
\item [i)] any $H$--invariant function $F$ on $P_o:=\pi^{-1}(o)$ extends to a $G$--invariant  function  $\widehat{F}$ on  $P$ (where  $\widehat{F}(gy) =F(y)$ for  $y \in  P_o$   and     $g \in  G$), and this is a $1$--$1$ correspondence;
\item[ii)] any $H$--invariant   hypersurface   $\Sigma$  of  the  fiber  $P_o$ extends to  a $G$--invariant   hypersurface   $\mathcal{E}_{\Sigma} := G \cdot\Sigma$  of  $P$, and this is a $1$--$1$ correspondence.
% a natural $1$--$1$  correspondence between
%$H$--invariant   hypersurfaces   $\Sigma$  in  the  fiber  $P_o$  and  $G$--invariant   hypersurfaces   $\mathcal{E}_{\Sigma} := G \cdot\Sigma$  in  $P$.
\end{itemize}
\end{lemma}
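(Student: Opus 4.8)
The statement asserts that invariance on a single fibre propagates to $G$-invariance on the whole total space, bijectively. The plan is to set up the extension map and its inverse explicitly and to check that the two constructions are mutually inverse, with the only real content being well-definedness of the extension (independence of the choice of $g$ with $gy\in P_o$, or equivalently of the representative of a coset in $G/H$).

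\emph{Step 1: Extension is well-defined.} Given an $H$-invariant function $F$ on $P_o$, define $\widehat{F}$ on $P$ by picking, for $y\in P$, any $g\in G$ with $g^{-1}\cdot y\in P_o$ (such $g$ exists since $G$ acts transitively on $B$ and $\pi$ is $G$-equivariant: choose $g$ so that $g^{-1}\cdot\pi(y)=o$), and setting $\widehat{F}(y):=F(g^{-1}\cdot y)$. If $g_1,g_2$ both work, then $g_1^{-1}g_2$ fixes $o$, hence $g_1^{-1}g_2\in H$; since $g_1^{-1}\cdot y$ and $g_2^{-1}\cdot y$ differ by the action of $g_1^{-1}g_2\in H$ on $P_o$, $H$-invariance of $F$ gives $F(g_1^{-1}\cdot y)=F(g_2^{-1}\cdot y)$. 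So $\widehat{F}$ is well-defined, and smoothness follows from local triviality of $\pi$ together with a local smooth section of $G\to G/H$. For the hypersurface case, define $\mathcal{E}_\Sigma:=G\cdot\Sigma$ and note that $\mathcal{E}_\Sigma\cap P_o=\Sigma$: the inclusion $\supseteq$ is clear, and for $\subseteq$, if $g\cdot\sigma\in P_o$ with $\sigma\in\Sigma$, then $g$ sends $o=\pi(\sigma)$ to $o$, so $g\in H$, and $H$-invariance of $\Sigma$ gives $g\cdot\sigma\in\Sigma$. This also shows $G\cdot\Sigma$ is an embedded hypersurface: locally over a trivializing neighbourhood $U\ni o$ of the base with a section $s:U\to G$, the map $U\times\Sigma\to P$, $(b,\sigma)\mapsto s(b)\cdot\sigma$, is a diffeomorphism onto $\pi^{-1}(U)\cap\mathcal{E}_\Sigma$, exhibiting it as a codimension-one submanifold.

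\emph{Step 2: $G$-invariance.} For $h\in G$ and $y\in P$, choose $g$ with $g^{-1}\cdot y\in P_o$; then $(hg)^{-1}\cdot(h\cdot y)=g^{-1}\cdot y\in P_o$, so $\widehat{F}(h\cdot y)=F((hg)^{-1}\cdot h\cdot y)=F(g^{-1}\cdot y)=\widehat{F}(y)$. Invariance of $\mathcal{E}_\Sigma=G\cdot\Sigma$ under $G$ is immediate. Moreover $\widehat{F}|_{P_o}=F$ (take $g=e$), so restriction is a left inverse of extension.

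\emph{Step 3: Restriction is also a right inverse.} If $\Phi$ is any $G$-invariant function on $P$, set $F:=\Phi|_{P_o}$; it is $H$-invariant since $H$ preserves $P_o$. Then $\widehat{F}(y)=F(g^{-1}\cdot y)=\Phi(g^{-1}\cdot y)=\Phi(y)$ by $G$-invariance of $\Phi$, so $\widehat{F}=\Phi$. Likewise, if $\mathcal{E}\subset P$ is a $G$-invariant hypersurface and $\Sigma:=\mathcal{E}\cap P_o$ (an $H$-invariant hypersurface of $P_o$, again because $H$ preserves $P_o$ and $\mathcal{E}$, and $\mathcal{E}$ meets $P_o$ transversally since $\mathcal{E}$ is $G$-invariant of codimension one while the $G$-orbit directions are transverse to the fibre over the open orbit), then $G\cdot\Sigma\subseteq\mathcal{E}$ by invariance, and conversely every point of $\mathcal{E}$ lies over some $b\in B$, can be moved by $G$ into $P_o$ landing in $\mathcal{E}\cap P_o=\Sigma$, hence lies in $G\cdot\Sigma$; so $\mathcal{E}=\mathcal{E}_\Sigma$. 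This establishes both bijections.

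\emph{Main obstacle.} The only delicate point is Step 1: ensuring that the extended object is genuinely well-defined and smooth (not merely set-theoretically consistent), which relies on the equivariance of $\pi$ to produce $g$ with $g^{-1}\cdot y\in P_o$, on the identification of the ambiguity group with $H$, and on a local smooth section of $G\to G/H$ to transfer smoothness; in the hypersurface case one must additionally verify that $G\cdot\Sigma$ is an \emph{embedded} codimension-one submanifold rather than merely an immersed or singular subset, which is where the local-triviality argument above is needed. Everything else is a formal check that extension and restriction are mutually inverse.
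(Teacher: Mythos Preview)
Your proof is correct and, at its core, the same argument as the paper's: the paper simply packages your Step~1 (choose $g$ with $g^{-1}\cdot y\in P_o$; the ambiguity is exactly $H$) as the identification $P\simeq G\times_H P_o$, and then reads off the $1$--$1$ correspondence between $G$--invariant functions on $P$ and $H$--invariant functions on $P_o$ as the standard dictionary for associated bundles. Your version is more explicit---you spell out both directions of the bijection, address smoothness via a local section of $G\to G/H$, and in part~(ii) you actually check that $G\cdot\Sigma$ is an embedded codimension--one submanifold and that $\mathcal{E}\cap P_o$ is a hypersurface via transversality---points the paper leaves implicit. Conversely, the associated--bundle phrasing makes the argument shorter once that formalism is taken for granted. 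There is no substantive difference in strategy.
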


\begin{proof}
The  stabilizer $H$  acts on $P_o$  and  we may identify   $\pi$  with  the  homogeneous  bundle  $\pi:  G \times_H P_o \to  B = G/H$  associated  with  the principal bundle  $G \to G/H$  and  the  action  of $H$ on $P_o$. Recall  that   $ G \times_H P_o$  is the orbit  space   of  the manifold  $G\times P_o$  with  respect  to the  action of $H$,  given  by
$$
H \ni h : (g,y) \mapsto  (gh^{-1}, hy)\, .
$$
i) The  restriction   to  $P_o$  of  a $G$--invariant  function $F$  on  $G \times_H P_o $ is identified  with  a left--invariant  function on $G\times P_o$ ,  which is   also $H$--invariant,  that  is a  function $  F(g,y)$  such  that
$F(g' g,y) = F(gh^{-1}, hy) = F(g,y)$ for  all $g,g' \in G$, $h \in H$, $y \in P_o$. Such a function  is   identified  with an  $H$--invariant  function  on  $P_o$.\\
ii)  An  $H$--invariant hypersurface $\Sigma \subset P_o$ defines  a  $(G \times H)$--invariant  hypersurface
$G \times \Sigma \subset  G\times P_o $.  It  projects  onto the $G$--invariant  hypersurface  $\E_{\Sigma}=G\cdot \Sigma$  in $P = G \times_H P_o$.
\end{proof}
%
%This lemma, applied to the bundle $\pi_{k,k-1}$,    establishes    a  bijection  between $\tau(H^{(k-1)})$--invariant  hypersurfaces  $\Sigma \subset   J^k_{o^{k-1}}$    and   $G$--invariant  hypersurfaces  $\E_{\Sigma}  =  G\cdot\Sigma$ in   $J^k$, i.e., invariant  PDEs  of order  $k$.   This
%
Corollary \ref{cor.1.1.corresp}, together with Lemma \ref{lemLemmaLemmino}, applied to the bundle $\pi_{k,k-1}$, implies  the  following   theorem,  which is  the main  result of  this  section.
\begin{theorem}\label{thMAIN1}
Let $M=G/H$ be a $k$--admissible homogeneous manifold (see Definition \ref{def.Dmitri}). Then there is  a natural  $1$--$1$   correspondence  between   $L_{H^{(k-1)}}$-- invariant hypersurfaces $\overline{\Sigma}$  (see also \eqref{eqn:decomp.tau.H}) of $J^k_{o^{k-1}}/W^k$
and  $G$--invariant  hypersurfaces  $\E_{\overline \Sigma}:= \E_{p^{-1}(\overline{\Sigma})}=   G \cdot p^{-1}(\overline{\Sigma})$  of  $J^k=J^k(n,M)$, where $p$ is the natural projection \eqref{eqn:p.proj}.
\end{theorem}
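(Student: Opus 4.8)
The plan is to chain together the two results the excerpt has already prepared, namely Corollary \ref{cor.1.1.corresp} and Lemma \ref{lemLemmaLemmino}, so that the bulk of the work has in fact already been done. First I would fix the fibre bundle $\pi = \pi_{k,k-1} : J^k \to J^{k-1}$ and restrict attention to the open orbit $\check J^{k-1} = G\cdot o^{k-1} = G/H^{(k-1)}$ guaranteed by (A1). Over this base the group $G$ acts by automorphisms of $\pi$, transitively on $\check J^{k-1}$, with stabilizer of the point $o^{k-1}$ equal to $H^{(k-1)}$; thus the hypotheses of Lemma \ref{lemLemmaLemmino} are met with $P = \pi^{-1}(\check J^{k-1})$, $B = \check J^{k-1}$, and $H = H^{(k-1)}$, and the fibre $P_{o^{k-1}}$ is exactly $J^k_{o^{k-1}}$. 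A small point I would address here is that Lemma \ref{lemLemmaLemmino} produces $G$--invariant hypersurfaces inside $\pi^{-1}(\check J^{k-1})$, and since $\check J^{k-1}$ is open in $J^{k-1}$, taking the closure (or, equivalently, noting that $G\cdot\Sigma$ is already the full $G$--orbit) lands us inside $J^k$ as claimed; I would remark that invariant hypersurfaces over the open orbit are the natural objects, which is precisely the content of (A1).

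Next I would combine this with the affine reduction. By $k$--admissibility, (A2) holds, so Corollary \ref{cor.1.1.corresp} gives a $1$--$1$ correspondence between $L_{H^{(k-1)}}$--invariant hypersurfaces $\overline\Sigma \subset J^k_{o^{k-1}}/W^k$ and $\tau(H^{(k-1)})$--invariant (cylindrical) hypersurfaces $\Sigma = p^{-1}(\overline\Sigma) \subset J^k_{o^{k-1}}$. The only thing to check is that ``$H^{(k-1)}$--invariant hypersurface of the fibre $J^k_{o^{k-1}}$'' in the sense of Lemma \ref{lemLemmaLemmino} means the same as ``$\tau(H^{(k-1)})$--invariant hypersurface'' in the sense of Corollary \ref{cor.1.1.corresp}: this is immediate because the action of $H^{(k-1)}$ on the fibre $J^k_{o^{k-1}}$ is by definition the affine representation $\tau$ of \eqref{eqn:tau.tauk.Dmitri}, and a subset is preserved by $H^{(k-1)}$ iff it is preserved by $\tau(H^{(k-1)})$. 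Composing the two bijections then yields
\begin{equation*}
\{\,L_{H^{(k-1)}}\text{--invariant hypersurfaces }\overline\Sigma\subset J^k_{o^{k-1}}/W^k\,\}
\;\longleftrightarrow\;
\{\,\tau(H^{(k-1)})\text{--invariant hypersurfaces }\Sigma\subset J^k_{o^{k-1}}\,\}
\;\longleftrightarrow\;
\{\,G\text{--invariant hypersurfaces }\E\subset J^k\,\},
\end{equation*}
and the composite carries $\overline\Sigma$ to $\E_{\overline\Sigma} = G\cdot p^{-1}(\overline\Sigma) = \E_{p^{-1}(\overline\Sigma)}$, which is exactly the asserted correspondence.

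Finally I would spell out naturality and the inverse map so that ``$1$--$1$ correspondence'' is unambiguous: given a $G$--invariant hypersurface $\E \subset J^k$, one recovers $\Sigma = \E \cap J^k_{o^{k-1}}$ (which is $\tau(H^{(k-1)})$--invariant since $H^{(k-1)}$ stabilizes $o^{k-1}$ and hence the fibre), and then $\overline\Sigma = p(\Sigma) = \Sigma \cap U$ for a chosen $L_{H^{(k-1)}}$--stable complement $U$ of $W^k$; that these operations are mutually inverse is precisely what Lemma \ref{lemma.aff.banale}, Corollary \ref{cor.1.1.corresp} and Lemma \ref{lemLemmaLemmino} jointly assert. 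I do not anticipate a serious obstacle — the theorem is essentially the concatenation of the three preceding lemmas — but the one place demanding care is the bookkeeping over the open orbit: one must make sure that restricting to $\check J^{k-1}$, applying Lemma \ref{lemLemmaLemmino}, and then viewing the output back inside $J^k$ does not secretly depend on choices, and that the cylindrical hypersurface $p^{-1}(\overline\Sigma)$ in the fibre is genuinely the fibre-slice of a globally well-defined $G$--invariant hypersurface of $J^k$. This is harmless given (A1)–(A2) but is the step I would write out most explicitly.
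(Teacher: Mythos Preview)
Your proposal is correct and follows essentially the same approach as the paper: the paper's proof consists of the single sentence that Corollary \ref{cor.1.1.corresp}, together with Lemma \ref{lemLemmaLemmino} applied to the bundle $\pi_{k,k-1}$, implies the theorem. Your write-up simply fleshes out this concatenation with the bookkeeping over the open orbit $\check J^{k-1}$ and the explicit inverse map, which the paper leaves implicit.
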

%begin{proof}
%Corollary \ref{cor.1.1.corresp} establishes $1$--$1$ correspondence $L_{H^{(k-1)}}$--invariant hypersurfaces $\overline{\Sigma}\subset J^k_{o^{k-1}}/W^k$ and $\tau(H^{(k-1)})$--invariant hypersurfaces $\Sigma=p^{-1}(\overline{\Sigma})\subset J^k_{o^{k-1}}$. By Lemma \ref{lemLemmaLemmino}, applied to the bundle $\pi_{k,k-1}$, we obtain a $1$--$1$ correspondence between $\tau(H^{(k-1)})$--invariant hypersurfaces $\Sigma=p^{-1}(\overline{\Sigma})\subset J^k_{o^{k-1}}$ and $G$--invariant hypersurface $\E=G\cdot \Sigma\subset J^k$.
%\end{proof}
%
In view of the discussions we did so far, taking into account Theorem \ref{thMAIN1}, we  get  the  following   strategy  for    constructing     $G$--invariant  PDEs   imposed on  the hypersurfaces  of   a  $k$--admissible homogeneous manifold $M = G/H$:
%
%\smallskip
%%
%
%\noindent
%1.  Assume the existence of a  fiducial hypersurface $S$, so that  $\check{J}^{k-1} =  G/H^{(k-1)} \subset J^{k-1}$ is an open orbit.\\
%2. Consider   the restriction $\pi_{k,k-1} : \check{J}^k \to  \check{J}^{k-1} = G/H^{(k-1)}$ of  the  affine  bundle   $ J^k \to J^{k-1}$ and  identify  it  with  the homogeneous   affine  bundle   $\pi_{k,k-1} :  G \times_{\tau(H^{(k-1)})} V^k$.\\
%where   $V^k=   S^k T_oS \otimes \mathcal{N}_o  $\\

\begin{enumerate}
\item    calculate   the orbit   $W^k = \tau(H^{(k-1)})\cdot o^{k} $   and decompose $\tau(H^{(k-1)})$ accordingly  to \eqref{eqn:decomp.tau.H};
%4. Describe  the induced  linear  action   $\overline{\tau}$  of  the   group $H^{(k-1)}$     in  the  quotient space  $\overline{V}^k =V^k/W$.\\
\item   describe $L_{H^{(k-1)}}$--invariant  hypersurfaces $\overline{\Sigma} \subset {V}^k=J^{k}_{o^{k-1}}/W^k$;
\item   write  down the  $G$--invariant   equations   $\E_{\overline \Sigma}  =  G \cdot p^{-1}(\overline{\Sigma})$ in  coordinates \eqref{eqn:jet.coordinates}.
\end{enumerate}

In  the   next  sections  we implement  this strategy  for the  Euclidean and the conformal  space.

\begin{remark}
 It may be worth noticing that, in order  to have the above strategy work, only the assumptions (A1) and (A2) on the origins $o^k$ and $o^{k-1}$ are strictly necessary. The fiducial hypersurface (see Definition \ref{defFidHyp}) is just a way to introduce the aforementioned origins in a  more tangible geometric way: indeed, they reflect the presence of a ``reference object'' to which we compare the jet of a generic hypersurface. The hypothesis of existence of such a fiducial hypersurface, which we even require to  be homogeneous with respect to a subgroup of $G$, is by no means restrictive: we will see below that in both the Euclidean and the conformal case, such a hypersurface clearly exists.
\end{remark}

%%%%%%%%%%%%%%%%%%%%%%%%%%%%%%%%%%%%%%%%%%%%%%%%%%%%%%%%%%%%%%%%%%%%%%%%%%%%%%%%%%%%%%%%%%%%%%%%%%%%%%%%%%%%%%%%%%%%%%%

\section{Invariant PDEs   for  hypersurfaces of $\EE^{n+1}$}\label{secCasEucl}

In this section  we  assume  that  $M= \EE^{n+1} =G/H  = \SE(n+1)/\SO(n+1) $ is    the Euclidean    $(n+1)$--dimensional space,  considered  as      homogeneous    space of  the  group
$$G=\SE(n+1)=\R^{n+1}\rtimes \SO(n+1)$$
of orientation--preserving motions.
We  will find $\SE(n+1)$--invariant PDEs following  the approach described in  the  previous  section.
\subsection{Geometry  of  the   jet  spaces  $J^{\ell}(n , \EE^{n+1}) $
  for  $\ell =1,2$ in terms of $G=\SE(n+1)$}
In this section we    give  a   description   of  the   jet  spaces $J^{\ell}=J^{\ell}(n , \EE^{n+1}) $ in terms of the Lie group action of $\SE(n+1)$ and  prove  that   the   Euclidean homogeneous  space  $\EE^{n+1}  = G/H$  is a $2$--admissible manifold.

Denote  by $(u,\x)=(u,x^1,\ldots, x^n)$  the  Euclidean  coordinates  associated  to  an orthonormal  frame\linebreak  
 $\{e_0,e_1,\ldots, e_n \}$   at  a point $o  \in \EE^{n+1}$   and identify   $\EE^{n+1}$  with  the  arithmetic  vector  space
 $\R^{n+1}$  of  coordinates.  In particular, $o=(0,0,\dots,0)\in\R^{n+1}$.   Using  the standard euclidean metric,   we identify   covectors  with  vectors.
 %We identify  the   dual  to  $e_i$ basis $e_i^*$   with  $e_i$.
We fix  the  hyperplane
$$
S_0 = \langle e_1, \ldots, e_n\rangle
$$
through  the origin $o$.    In   the  coordinates $(u,\x)$, $S_0$ is defined by  the   function    $u = f(\x) = 0$ and  all  the coordinates $u_{i_1\cdots i_{\ell}}$ of its lift  $S_0^{(l)}$ (see \eqref{eq:jet.ext}--\eqref{eqn:lift.local}) are identically  zero. Below we will show that $S_0$ is a fiducial hypersurface according to Definition \ref{defFidHyp}: to this end   we  denote  by $o^k=[S_0]^k_o$  the    distinguished point of $J^k$ over  the origin  $o \in  \EE^{n+1}$.   In  particular  $o^1 \in J^1$,   considered  as    a  tangent  vector  space, will be  denoted   by   \begin{equation}\label{eqn:V.Dmitri}
V := T_oS_0 = \langle e_1, \dots, e_n\rangle\,.
\end{equation}

\begin{proposition}\label{PropFidHypCasoEuclid}
The   homogeneous  space    $\EE^{n+1} = G/H$ is  $2$--admissible. More precisely,  $J^1 =  G \cdot o^1 =  G/H^{(1)}$,  where $H^{(1)} = \OOO(n)$ is  the  subgroup of  $H =  \SO(n+1)$   which preserves  the  vector  $e_0$  up  to  the  sign.   The   hyperplane  $S_0$  is  a  fiducial  hypersurface.
\end{proposition}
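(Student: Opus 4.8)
The statement has three parts: (i) $J^1 = G\cdot o^1 = G/H^{(1)}$ with $H^{(1)}=\OOO(n)$; (ii) $\EE^{n+1}=G/H$ is $2$-admissible, i.e.\ assumptions (A1) and (A2) hold at $k=2$; (iii) $S_0$ is a fiducial hypersurface. I would organize the proof around parts (i) and (ii), since (iii) then follows almost immediately by unwinding Definition~\ref{defFidHyp}: $S_0$ is a hyperplane, hence homogeneous under the subgroup $\R^n\rtimes\OOO(n)$ of $G$ preserving it, and the points $o^1=[S_0]^1_o$, $o^2=[S_0]^2_o$ will have been shown to satisfy (A1) and (A2) in the course of proving (i) and (ii).

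\textbf{Step 1: the action on $J^1$ and the isotropy $H^{(1)}$.} Recall $J^1(n,\EE^{n+1})=\Gr_n(T\EE^{n+1})=\p T^*\EE^{n+1}$, the space of tangent hyperplanes. Since $G=\SE(n+1)$ acts transitively on $\EE^{n+1}$ and the isotropy $H=\SO(n+1)$ of the origin $o$ acts transitively on the set of hyperplanes through $o$ (indeed $\SO(n+1)$ acts transitively on $\Gr_n(\R^{n+1})$, equivalently on lines in $\R^{n+1}$ via the orthogonal complement), the $G$-action on $J^1$ is transitive; in particular $\check J^1=G\cdot o^1=J^1$ is open, which is (A1) for $k=2$. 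The stabilizer $H^{(1)}=G_{o^1}$ consists of motions fixing $o$ and preserving the hyperplane $S_0=\langle e_1,\dots,e_n\rangle$ (as an unoriented subspace); such a motion must preserve the normal line $\R e_0$, hence sends $e_0\mapsto \pm e_0$ and restricts to an element of $\SO$ or $\OOO$ on $\langle e_1,\dots,e_n\rangle$ according to the sign, subject to the total determinant being $+1$. This identifies $H^{(1)}\cong\OOO(n)$, embedded in $\SO(n+1)$ as $\{\operatorname{diag}(\det A, A): A\in\OOO(n)\}$, and gives $J^1=G/H^{(1)}$.

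\textbf{Step 2: the fibre $J^2_{o^1}$ and assumption (A2).} By Proposition~\ref{prop:affine.bundles} (with the identification \eqref{eqn:identification}), the fibre $J^2_{o^1}$ is canonically the vector space $S^2(T^*_oS_0)$ of quadratic forms in $n$ variables, with $o^2=[S_0]^2_o$ as zero vector; concretely these are the Hessian coordinates $(u_{ij})$. The affine action $\tau:H^{(1)}\to\Aff(J^2_{o^1})$ must be computed: an element $h\in H^{(1)}$ fixes $o$ and $o^1$, so it acts \emph{linearly} on $S^2(T^*_oS_0)$, namely by the standard action of $\OOO(n)$ on quadratic forms (possibly twisted by the normal sign, but a sign acts trivially on the bundle direction up to the $\GL$-action — this is exactly the content I would verify carefully). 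Therefore the orbit $W^2=\tau(H^{(1)})\cdot o^2=\{o^2\}$ is the zero subspace, which is trivially a vector subspace, and its translation group $T_{W^2}=\{\id\}$ is trivially contained in $\tau(H^{(1)})$. Hence (A2) holds, $\EE^{n+1}$ is $2$-admissible, $V^2=J^2_{o^1}/W^2=S^2(T^*_oS_0)$, and $L_{H^{(1)}}=\tau(H^{(1)})\cong\OOO(n)$.

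\textbf{Main obstacle.} The only genuinely non-formal point is Step~2: one must check that the natural action of $H^{(1)}$ on the fibre $J^2_{o^1}$ is the purely linear action on quadratic forms with \emph{no translational part}, i.e.\ that $\tau(h)(o^2)=o^2$ for all $h\in H^{(1)}$. Geometrically this is clear — a Euclidean motion fixing $o$ and the tangent hyperplane $S_0$ sends the (flat) hyperplane $S_0$ to itself, hence sends $[S_0]^2_o$ to itself — but to make it rigorous I would either invoke this geometric argument directly, or compute the prolonged action using the jet coordinates: write a motion near $o$ in the form $(u,\x)\mapsto(\text{rotation})(u,\x)$, pass to the induced transformation of $(u,u_i,u_{ij})$, restrict to $u_i=0$, $u_{ij}=0$, and observe the image still has $u_i=0$, $u_{ij}=0$. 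Everything else is bookkeeping: transitivity on hyperplanes, the block form of $H^{(1)}$ inside $\SO(n+1)$, and the identification of $S_0$ as a homogeneous (hence fiducial) hypersurface.
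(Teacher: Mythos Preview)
Your proposal is correct and follows essentially the same route as the paper: transitivity of $\SO(n+1)$ on hyperplanes through $o$ gives (A1) with $H^{(1)}=\OOO(n)$, and then the observation that $\tau(H^{(1)})$ acts \emph{linearly} on $J^2_{o^1}\simeq S^2V^*$ (the paper writes this as $\tau(B)\colon\beta\mapsto B^t\beta B$; you argue it geometrically via $h(S_0)=S_0$) forces $W^2=\{0\}$ and hence (A2). Your explicit block description $H^{(1)}=\{\operatorname{diag}(\det A,A):A\in\OOO(n)\}$ and your remark about the possible normal-sign twist are slightly more detailed than the paper, but the argument is the same.
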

\begin{proof}
The  stability   subgroup  of  the origin $o \in  \EE^{n+1}$  is  $\SO(n+1)$.   It  acts  transitively on  the Grassmannian  $\Gr_{n}(T_o\EE^{n+1} ) \simeq  \p T^*_o \EE^{n+1}$,  which is  the  fiber  of  the  bundle
$J^1=\p T^*\EE^{n+1} \to J^0 =  \EE^{n+1}$  over  the point  $o$.  The  stability  subgroup  of  the point $o^1= [S_0]^1_o$
 is  the   subgroup   $H^{(1)} = \OOO(n)$ of  $H = \SO(n+1)$: it preserves  $e_0\in V^\perp$ up  to  the   sign.
Hence   $J^1   =  G/ H^{(1)}$ and the condition (A1) of Section \ref{sec:descriptions} is satisfied.\\
 Let   $S_f = \{u = f(\x) \}$  be  a hypersurface  through   the origin  $o$  with unit normal vector
 $e_0$  at $o$, so that $[S_f]^1_o=V$.  Then  the  second jet $[S_f]^2_0$    has  coordinates  $\x=0$, $u=0$, $u_i =\tfrac{\partial f}{\partial x^i}(0)= 0$, $u_{ij} = \tfrac{\partial^2 f }{\partial x^i\partial x^j}(0)$   (see also the notation \eqref{eqDefHigCoordJets}) and it  is identified  with  the  second  fundamental  form  $\beta =  u_{ij}dx^idx^j\in S^2V^*$   of  the  hypersurface $S_f$  at  $o$.
%The  action $\tau_2$ of the   stabilizer   $H^{(1)}$ of the point $o^2 = [S_0]^2_o$ reduces  to  the  standard  action  of  the  group $\OOO(n)$  on  the  quadratic  form $ \beta_f =u_{ij} dx^i dx^j$.
Recalling that the fiber  $J^2_{o^1}$  with  the origin  $o^2$ is identified  with the space $S^2V^*$ (see \eqref{eqn:identification}), the natural action  $\tau$ of $\OOO(n)$ on $J^2_{o^1}\simeq S^2V^*$ is:
$$
\tau(B)  :  \beta\in S^2V^* \mapsto  B^t \beta B \in S^2V^* \,, \quad B\in  H^{(1)} = \OOO(n)\,.
$$
%where   $U = (u_{ij})$  is  the Hessian  matrix of  $u$ at $o$.
%It coincides wit the matrix of the   second  fundamental  form  $\beta_f$ at $o$ as $e_0$ is a unit vector perpendicular to the surface $S_f$ at $o$.
In particular, this  shows   that   $G$  has  no open orbits  in $J^2$. Furthermore, since $\tau(H^{(1)})$ is a linear group, its translational part $W^2$ is trivial and the condition (A2) of Section \ref{sec:descriptions} is satisfied, so that $\EE^{n+1}$  is    a  $2$--admissible   homogeneous  space and $S_0$ a  fiducial hypersurface.
%Since  the  second  fundamental  form  of  the  hyperplane is  trivial, the  stabilizer  $H^{(2)}$  of the point $o^2 = [S_0]^2_o$   (and, more generally,  the  stabilizer  of  the point $o^{\ell} \in  J^{\ell}$) coincides   with  $H^{(1)}$   \textbf{and  the  orbit  $\mathcal{R} = G \cdot o^{2} =  G/H^{(1)}$  (and, more generally, $G\cdot o^{\ell} = G/H^{(\ell)}$)...what?}. In particular,  the  projection   $\mathcal{R}  \to J^1$  is  a  $G$--equivariant  diffeomorphism  and   $S_0$ is  a  fiducial hypersurface.
\end{proof}
Note that, in this case,
\begin{equation}\label{eqn:euc.V2}
V^2=J^2_{o^1}\big/ W^2=J^2_{o^1}=S^2V^*\,.
\end{equation}

\begin{corollary}\label{cor.Dmitri.1}
The   stability   subgroup   $H^{(2)}$   of  the  point $o^2 =  [S_f]^2_o$  of    a  hypersurface  $S_f$ through  the point  $o$  with $[S_f]^1_o  = o^1 $  is  the  subgroup  of  $H^{(1)} = \OOO(n)$ that preserves  the  second  fundamental  form $\beta$ of $S_f$,  that is
%$$
%H^{(2)} =   \{ B\in \OOO(n) \,\,|\,\,\tau(B)^* \beta:= \beta(\tau(B) \,\cdot\,, \tau(B) \,\cdot) = \beta\}\,.
%$$
$$
H^{(2)} =   \{ B\in \OOO(n) \,\,|\,\,\tau(B)(\beta) = \beta\}\,.
$$
\end{corollary}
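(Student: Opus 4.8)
The plan is to deduce the statement directly from Proposition~\ref{PropFidHypCasoEuclid} together with the affine-bundle model of Proposition~\ref{prop:affine.bundles}, so the argument is mostly a matter of unwinding two identifications. First I would observe that $H^{(2)} = G_{o^2}$ by definition, and that the canonical projections $\pi_{2,1}$ and $\pi_{2,0}$ are $G$--equivariant; hence any $g\in G$ fixing $o^2$ also fixes $o^1 = \pi_{2,1}(o^2)$ and $o = \pi_{2,0}(o^2)$. Consequently $H^{(2)}\subseteq H^{(1)}$, and $H^{(1)} = \OOO(n)$ is exactly the stabilizer of $o^1=[S_0]^1_o$ identified in Proposition~\ref{PropFidHypCasoEuclid}. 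So the only thing left is to characterise which $B\in\OOO(n)$ fix $o^2$.

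For such a $B$, fixing $o^1$ means $B$ preserves the fibre $J^2_{o^1}$ and acts on it through the affine transformation $\tau(B)\in\Aff(J^2_{o^1})$ of \eqref{eqn:tau.tauk.Dmitri}. By Proposition~\ref{PropFidHypCasoEuclid} the translational part $W^2$ is trivial, so, with respect to the origin $o^2=[S_0]^2_o$ and the identification $J^2_{o^1}\simeq S^2V^*$ of \eqref{eqn:identification}, the action is the purely linear one $\beta\mapsto B^t\beta B$. The remaining point to pin down is that, under this same identification, $o^2=[S_f]^2_o$ corresponds to the second fundamental form $\beta$ of $S_f$ at $o$: writing $S_f=\{u=f(\x)\}$ with $f(0)=0$ and $df(0)=0$, the affine difference of $[S_f]^2_o$ from the origin $[S_0]^2_o$ is, by the model described after Proposition~\ref{prop:affine.bundles}, precisely the Hessian $u_{ij}=\partial^2_{ij}f(0)$, i.e.\ $\beta = u_{ij}\,dx^idx^j\in S^2V^*$ (recall that all jet coordinates of $S_0$ vanish).

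Combining these, $B\in H^{(2)}$ if and only if $B\in\OOO(n)$ and $\tau(B)$ fixes $o^2$, that is $\tau(B)(\beta)=\beta$, equivalently $B^t\beta B=\beta$; this is the asserted description of $H^{(2)}$. I do not expect a genuine obstacle here: the content is entirely in the bookkeeping, namely checking that elements of $H^{(1)}$ act on the fibre through $\tau$ (already recorded in Proposition~\ref{PropFidHypCasoEuclid}) and that the distinguished point $[S_f]^2_o$ matches the second fundamental form under the affine identification (Proposition~\ref{prop:affine.bundles}). Once both identifications are in place the corollary is immediate.
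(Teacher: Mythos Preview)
Your proposal is correct and is essentially the argument the paper intends: the corollary is stated without proof because both ingredients you use---that $\tau(B)$ acts on $J^2_{o^1}\simeq S^2V^*$ linearly by $\beta\mapsto B^t\beta B$, and that $[S_f]^2_o$ is identified with the second fundamental form $\beta=u_{ij}\,dx^idx^j$---are already established in the proof of Proposition~\ref{PropFidHypCasoEuclid}. The only cosmetic point is that the symbol $o^2$ is being used both for the origin $[S_0]^2_o$ of the affine identification and for the point $[S_f]^2_o$ whose stabilizer is computed; this overload is already present in the corollary's statement, so your treatment of it is fine.
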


\subsection{Construction of $\SE(n+1)$--invariant PDEs}\label{secEuclDmitri}
%Using  Corollary \ref{cor.Dmitri.1} \textbf{WHERE EXACTLY?},  we  describe  second order  $G$--invariant PDEs, i.e., $G$--invariant  hypersurfaces   $\mathcal{E} \subset J^2$ .
In the considered case, the construction of  $\SE(n+1)$--invariant PDEs, in view of Theorem \ref{thMAIN1}, reduces to the description of $\tau(\OOO(n))\simeq\OOO(n)$--invariant hypersurfaces in $J^2_{o^1}=S^2V^*\simeq S^2\R^n$ (see \eqref{eqn:V.Dmitri} and the identification \eqref{eqn:identification}).
%the action  $\tau$ of  the  group $H^{(1)}=\OOO(n)$  on  the   fiber  $J^2_{o^1}$   reduces  to  the standard linear action  of  the  group  $H^{(1)}=\OOO(n)$  in  the  vector  space   $S^2 V  = S^2 \R^n $  (see \eqref{eqn:V.Dmitri} and the identification \eqref{eqn:identification}) of  symmetric  bilinear  forms.
%So,  according  to  Theorem \ref{thMAIN1}, the  problem of describing  second order  $G$--invariant PDEs, i.e., $G$--invariant  hypersurfaces   $\mathcal{E} \subset J^2$,
%reduces  to the description  of  $\OOO(n)$--invariant hypersurfaces   $\Sigma$  in  the  space $S^2V$.

Denote by $k_1, \dots , k_n$ the eigenvalues of  the  shape operator   $A=g^{-1} \circ \beta$ (principal  curvatures), where $g$ is the restriction to \eqref{eqn:V.Dmitri} of the euclidean metric of $\EE^{n+1}$. Any $\OOO(n)$--invariant polynomial on $S^2V^*$ is a polynomial $F(\sigma_1, \dots , \sigma_n)$
 where  $\sigma_i,\,  i =1, \dots, n$, are the elementary  symmetric functions of  the   principal   curvatures $k_1, \dots, k_n$
%The algebra of $\OOO(n)$--invariant polynomial on $S^2V^*$ is generated by elementary symmetric functions
%$$
%\sigma_m=\sum_{i_1\leq\cdots \leq i_m} k_{i_1}\cdots k_{i_m}
%$$
or, equivalently, of the symmetric functions
$$
\tau_m=\tr{(A^m)}\, .%=\sum_{i}^nk_i^m\,.
$$
%\textbf{VERIFICARE... I TAU DI SOPRA SEMBRANO SBAGLIATI, MA ORA SONO STANCO}

An invariant polynomial $F=F(\tau_1,\dots,\tau_n)$ defines the $\OOO(n)$--invariant algebraic hypersurface
\begin{equation}\label{eqn:euc.F.}
\Sigma=\overline{\Sigma}=\{F=0\}\subset S^2V^*
\end{equation}
(see also \eqref{eqn:euc.V2}). The associated hypersurface $\E=\E_{\Sigma}=\SE(n+1)\cdot {\Sigma}\subset J^2$ is an $\SE(n+1)$--invariant hypersurface, that defines a second order PDEs polynomial in the second derivatives.
Solutions to PDE $\E_{\Sigma}$ are hypersurface $S\subset M$ whose shape operator $A_{\pp}$ satisfies, $\forall\,\pp\in S$, the equation
\begin{equation}\label{eqn:euc.final.form.F}
F({\tau_1}_{\pp},\dots,{\tau_n}_{\pp})=F\left(\tr(A_{\pp}),\tr(A_{\pp}^2),\dots,\tr(A_{\pp}^n)\right)=0\,.
\end{equation}

\begin{remark}
More generally, any function $F(\tau_1,\dots, \tau_n)$ defines the $\SE(n+1)$--invariant PDE $F=0$.
\end{remark}

%Since  the   algebra  of $\OOO(n)$--invariant  functions  on  $S^2V^*$ is  generated  by  the  elementary  symmetric functions $\sigma_1, \dots, \sigma_n$ of  the  eigenvalues $k_1, \cdots , k_n$ (principal  curvatures) of  the   shape operator   $g^{-1} \circ \beta$,  \textbf{DIRE COSA E' g E DIRE SE E' IN UN PUNTO} an   invariant  hypersurface $\Sigma$ is     described   by     the  equation    $ F(\sigma_1, \cdots, \sigma_n) =0$   where   $F$ is  a  function  of   $n$ variables.\\
Finally, Theorem \ref{thMAIN1} implies the following one.
\begin{theorem}\label{th.euc.main}
Any second order $\SE(n+1)$--invariant PDE for  hypersurfaces  $S_f=\{u=f(\x)\}$ in  $\EE^{n+1}$ which is a polynomial in the second order derivatives of $f$
is of the form \eqref{eqn:euc.final.form.F}, where $F$ is a polynomial of $n$ variables.
%by  a function  $F(\sigma_1, \dots, \sigma_n)$   where  $\sigma_i,\,  i =1, \dots, n$ are  elementary  symmetric functions of  the   principal   curvatures $k_1, \dots, k_n$. The  associated  PDE is  given  by  $   \E_F =  \{  [S]^2_{\pp}  \in  J^2\,\,|\,\,  F(k_1(\pp), \dots, k_n(\pp)) =0 \} $    where  $k_1(\pp), \dots, k_n(\pp)$  are the principal  curvatures of  the hypersurface $S$ at the point  $\pp \in  S$.
%which  depend only on  the  second  jet of  the  hypersurface.
%Locally,  in coordinates \eqref{eqn:jet.coordinates}, the  principal  curvatures $k_i(x)$  are  expressed  as the
% well known polynomials  of  the trace   functions   $\tau_h(u_i, u_{ij} =   \tr A^h$   where   $A$ is the    shape operator  of  $S$.
\end{theorem}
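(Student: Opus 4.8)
The plan is to apply Theorem~\ref{thMAIN1} with $k=2$, using the identifications already established in Proposition~\ref{PropFidHypCasoEuclid} and its corollary. First I would recall that, by Proposition~\ref{PropFidHypCasoEuclid}, the Euclidean space $\EE^{n+1}=\SE(n+1)/\SO(n+1)$ is $2$--admissible, with $H^{(1)}=\OOO(n)$, and that since $\tau(\OOO(n))$ acts linearly on $J^2_{o^1}\simeq S^2V^*$, the translational part $W^2$ is trivial; hence $V^2=J^2_{o^1}/W^2=S^2V^*$ and $L_{H^{(1)}}=\tau(\OOO(n))$. Theorem~\ref{thMAIN1} then gives a $1$--$1$ correspondence between $\OOO(n)$--invariant hypersurfaces $\overline{\Sigma}\subset S^2V^*$ and $\SE(n+1)$--invariant hypersurfaces $\E=\SE(n+1)\cdot\overline{\Sigma}\subset J^2$. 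So it suffices to show that every hypersurface of $S^2V^*$ that is invariant under $\tau(B):\beta\mapsto B^t\beta B$ and is \emph{algebraic} (defined by a polynomial equation) is of the form $\{F(\tau_1,\dots,\tau_n)=0\}$.

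The second step is the invariant--theoretic core: the ring of $\OOO(n)$--invariant polynomials on $S^2V^*$, under the action $\beta\mapsto B^t\beta B$, is the polynomial ring generated by $\tau_1,\dots,\tau_n$ where $\tau_m=\tr(A^m)$ and $A=g^{-1}\circ\beta$ is the shape operator. This is classical: $\OOO(n)$ acts on symmetric matrices by conjugation (after using $g$ to identify $S^2V^*$ with symmetric endomorphisms), every symmetric matrix is orthogonally diagonalizable, so an invariant polynomial is a symmetric polynomial in the eigenvalues $k_1,\dots,k_n$, and by the fundamental theorem of symmetric functions these are freely generated by the power sums $\tau_m=\sum_i k_i^m$ (equivalently by the elementary symmetric functions $\sigma_m$) for $m=1,\dots,n$. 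I would cite this rather than reprove it, as it is stated in the running text just above the theorem.

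The third step turns a single invariant polynomial equation into the statement. Given an $\OOO(n)$--invariant algebraic hypersurface $\overline{\Sigma}\subset S^2V^*$, it is the zero locus of some polynomial $P$; replacing $P$ by the average $\tilde P(\beta)=\int_{\OOO(n)} P(B^t\beta B)\,dB$ (Haar measure; a finite average over a spanning set of orbits also works, but Haar is cleanest) does not change the zero locus on the invariant set $\overline{\Sigma}$ in general—so instead I would argue componentwise: $\overline{\Sigma}$ is a union of irreducible components, each of which is again $\OOO(n)$--invariant (since $\OOO(n)$ is connected? it is not—$\OOO(n)$ has two components, but it still permutes the components of $\overline{\Sigma}$, and a hypersurface component of maximal dimension invariant under a group is cut out by an invariant irreducible polynomial up to the action), hence its ideal is generated by an invariant irreducible polynomial, which by Step~2 is a polynomial in $\tau_1,\dots,\tau_n$; multiplying these together yields the single polynomial $F(\tau_1,\dots,\tau_n)$ with $\overline{\Sigma}=\{F=0\}$. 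Finally, pushing this through the correspondence of Theorem~\ref{thMAIN1} and writing $\beta=u_{ij}dx^idx^j$, $A=g^{-1}\beta$, in the jet coordinates \eqref{eqn:jet.coordinates} gives exactly the form \eqref{eqn:euc.final.form.F}.

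The main obstacle is the bookkeeping in Step~3: passing from ``$\overline{\Sigma}$ is an invariant hypersurface'' to ``$\overline{\Sigma}$ is the zero set of an invariant polynomial'' is not completely automatic because $\OOO(n)$ is disconnected and because a priori one only knows $\overline{\Sigma}$ is $G$--invariant as a \emph{set}, not that its defining ideal is generated by invariants. The clean way around this is to work with the real radical ideal of $\overline{\Sigma}$: it is $\OOO(n)$--stable, so by the Reynolds operator (averaging over the compact group $\OOO(n)$) it is generated by invariant polynomials, and any such invariant is a polynomial in the $\tau_i$ by Step~2; among these generators one of them must already cut out a hypersurface (else $\overline{\Sigma}$ would have codimension $\ge 2$), and we take that as $F$. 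I would also flag explicitly—as the paper itself does in its ``key clarification''—that the converse inclusion (that $\{F=0\}$ is genuinely a codimension--one submanifold) requires $F$ to be chosen so that its zero set is smooth of the right dimension, which is why the statement is phrased as ``is of the form \eqref{eqn:euc.final.form.F}'' rather than claiming a bijection with all polynomials $F$.
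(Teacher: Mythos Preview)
Your approach is essentially identical to the paper's, which simply states that Theorem~\ref{thMAIN1} implies the result after the preceding setup (Proposition~\ref{PropFidHypCasoEuclid}, the identification $V^2=S^2V^*$ in \eqref{eqn:euc.V2}, and the classical description of $\OOO(n)$--invariant polynomials on symmetric matrices). Your Step~3 is considerably more careful than the paper about the passage from ``$\OOO(n)$--invariant algebraic hypersurface'' to ``zero set of an invariant polynomial''---the paper does not justify this direction at all beyond the one--line ``Theorem~\ref{thMAIN1} implies the following one''---but the overall strategy is the same.
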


\subsubsection{Description  of  $\SE(n+1)$--invariant  PDEs in local  coordinates}\label{sec.euc.local}
Below we  write  down, in coordinates \eqref{eqn:jet.coordinates},    $\SE(n+1)$--invariant PDEs $\E  \subset  J^2=J^2(n, \EE^{n+1}) $ for  a hypersurface  $S_f = \{  u = f(\x)\}$. The first fundamental form $g$ of $S_f$ is given by
%$$
% g_{f}   =  g_{ij}(x) dx^i dx^j,\,\,    g_{ij}(x) =  \delta_{ij} + f_{,i} f_{,j}
%$$
$$
g=g_{ij}dx^idx^j=\big(\delta_{ij} + u_iu_j\big)dx^idx^j\, ,
$$
so that
$$
g^{-1}=g^{ij}\partial_{x^i}\partial_{x^j}=\left(\frac{\det(g)\delta_{ij}-u_iu_j}{\det(g)}\right)\partial_{x^i}\partial_{x^j}
%= \delta_{ij}-\frac{u_iu_j}{1+\sum_{h=1}^n u_i^2}
\,,\quad \det(g)=1+\sum_{h=1}^n u_i^2\,.
$$
%
%A normal  vector  of  $S_f$ at a point $\x= (x^1, \ldots, x^n) $  is  given  by
%$(1, -u_1,\dots, -u_n)$. Let us denote the unit normal vector by
%$$
%n =\frac{(1, -u_1,\dots, -u_n)}{\sqrt{1+\sum_i^n u_i^2}}\,.
%$$
 %Note  that   the  lift  $j^2 S_f$  to  $J^2$  has   coordinates  $u = f(x), x^i, u_i=  f_{,i}(x), u_{ij}= f_{,ij}(x) $.
 %The  second  fundamental  form  $\beta_f  \in   S^2 T_xS_f$  of  $S_f$  at   the point  $x$ is  given  by
The  second  fundamental  form $\beta$ of $S_f$ is given by
$$
\beta =
%(n \cdot e_0 )u_{ij}  dx^i dx^j =
\beta_{ij} dx^i dx^j =\frac{u_{ij}}{\sqrt{\det(g)}}dx^idx^j\,.
$$
%where     $b_{ij}(x)  = (n_x \cdot e_0) f_{,ij}(x)$ and   "dot"  stands  for  the  scalar product.\\
 %In particular,   the   second  fundamental  form of  the  surface  $S_f$  depends only on the  first  and the  second  derivatives  $f_{,i}, \, f_{,ij}$  of  the    function  $f$.\\
%The position vector  of  the hypersurface is  given  by  $ r(x) = f(x) e_0 +  x^i e_i$.  This implies  that  the  coordinate vector   field  $r_{,i} = f_{\,}(x) e_0 = e_i$.  Hence,   the   first  quadratic  form ( or Riemannian metric) of  $S_f$   has  the     form
Thus, the trace of the shape operator $A=g^{-1}\circ\beta$ of the hypersurface $S_f$ is given by
\begin{equation}\label{eqn:tr.A.euc}
\tr(A)=\tr(g^{-1}\circ\beta)=\sum_{i,j=1}^n\frac{\big(\det(g)\delta_{ij}-u_iu_j\big)u_{ij}}{\det(g)^{\frac32}}\, ,
\end{equation}
so that one can easily obtain the local expression of a $\SE(n+1)$--invariant PDE in view of Theorem \ref{th.euc.main}. For instance,
%
%Then   the principal  curvatures   $k_i(x)$,   which  are  eigenvalues  of  the  shape operator  $A =  g^{-1}\circ \beta$
%   depend  only on  the  second jet   $[S_f]^2_x$   and,  more precisely   depends  only on   first  and second  derivatives of  the  function  $f$.    Denote  by $\tau_j (A) := \mathrm{tr} A^j, \,  j =1, \cdots, n $.
%   We  may  consider   $\tau_j = \tau_j (f_{,i},  f_{,ij})$    as   functions   of  the  partial  derivatives of   $f$.
   %     Since $\tau_j$ are  generators of  the  algebra  of   symmetric  functions  of  the   mean  curvatures  $k_1, \cdots, k_j$,  the  main   theorem implies
%     \begin{proposition}
 %      Let $F(\tau_1, \dots, \tau_n)$  be  a   function  on  $n$ variables.   It  defines  a  second order  PDE  for
  %       hypersurfaces  of  the   form    $ u =  f(x^1, \cdots, x^n)$ ,  which is  given  by
   %      $$ \E_F: =   F(  \mathrm{tr} A, \mathrm{tr}A^2, \dots,  \mathrm{tr}A^n) $$
    % where    $A =  A(f_{,i}, f_{,ij})  =  (g_f)^{-1}\circ \beta_f$  is  the  shape operator.
   %  \end{proposition}
%
%
in the case of $n=2$, i.e., of $2$ independent variables,
$\tau_1  =  \tr(A)$  and the  equation $\tau_1=0$, in view of \eqref{eqn:tr.A.euc}, gives the  classical  equation  for minimal hypersurfaces:
$$
(1+u_2^2)u_{11}-2u_1u_2u_{12}+(1+u_1^2)u_{22}=0\,.
$$
Also, note that the classical Monge-Amp\`ere equation is obtained as follows:
$$
\frac12\bigg( \big(\tr (A)\big)^2-\tr(A^2)\bigg)=\frac{u_{11}u_{22}-u_{12}^2}{1+u_1^2+u_2^2}=0\,.
$$

\section{Invariant PDEs for hypersurfaces of   $\SSS^{n+1}$}\label{secCasConf}
In this section, we  describe  second order $G$--invariant PDEs    in  the  case  when   $M= G/H : = \SSS^{n+1} = \SO(1,n+2)/\mathsf{Sim}(\EE^{n+1})$   is  the  conformal sphere, that is the sphere $\SSS^{n+1}$ endowed with the conformal class $[g]$ of the standard metric $g$,  considered  as  a homogeneous manifold  of  the  conformal  group $G=\SO(1,n+2)$, called also the M\"obius (or Lorentz) group. We   use  the  standard  model of   the  conformal  sphere as the projectivisation of  the light  cone in  the Minkowski vector  space   $\R^{1,n+2}$.  The  stabilizer  of  the point  $o = \R p$, where $p$ is an isotropic  line, is isomorphic  to  the  group  $\mathsf{Sim}(\EE^{n+1})$ of  similarities  of  the  Euclidean   space $\EE^{n+1}$.

\subsection{Geometry  of the  conformal  sphere}

\subsubsection{The    standard  decomposition   of  the  Minkowski    space $ W=\R^{1,n+2} $  and of the  M\"obius  group $G=\SO(W)$}
  Let    $W=\R^{1,n+2}$  be   the  pseudo--Euclidean vector space  with   an orthonormal basis
%\begin{equation}\label{eqBaseDaCAmbiare}
%\{e_-,e_0,e_1,\ldots, e_{n}, e_+\}\, ,
%\end{equation}
%%
% which  defines  the splitting
%\begin{equation*}
%W=\R e_-\oplus E\oplus \R e_+\, ,\quad E:=\Span{e_0,e_1,\ldots, e_n}\, .
%\end{equation*}
%Therefore,
%\begin{equation*}
%W\ni w = x_-e_-+\sum_{i=0}^nx^ie_i+x_+e_+\,.%=x_-e_-+\x+x_+e_+\, ,\quad \x:= \sum_{i=0}^nx^ie_i\in E\, .
%\end{equation*}
%%
%%The  (pseudo--Euclidean) scalar product in $W$ restricts to a (Euclidean) scalar product in the $(n+2)$--dimensional space $E + \R e_+$.
%Replacing  the  basis  vectors $e_{-}$ and $e_+$, respectively, by  the isotropic  vectors
%%
%\begin{equation*}
%p = \frac{1}{\sqrt{2}}(e_-+e_+)\, ,\quad q=  \frac{1}{\sqrt{2}}(e_--e_+)\, ,
%\end{equation*}
%we get  a  more  convenient basis
\begin{equation}\label{eqn:iso.basis}
\{p,e_0,e_1,\ldots, e_{n}, q\}\, ,
\end{equation}
where $p$ and $q$ are isotropic vectors.
%called \emph{iso-orthogonal  basis}, whose  the associated Gram matrix is
%\begin{equation*}%\label{eqIgrande}
%I:=\left(\begin{array}{ccc}0 & 0 & 1 \\0 & \id_E & 0 \\1 & 0 & 0\end{array}\right)\, .
%\end{equation*}
%written  with  respect  to  the decomposition
With respect to the  basis \eqref{eqn:iso.basis}, we have the decomposition
\begin{equation}\label{eqDecDabliu}
W=\R p \oplus E \oplus \R q = \R p\oplus (\R e_0 \oplus E^{e_0}) \oplus \R q   \, ,
\end{equation}
where
$$
E^0:=\langle e_1,\dots,e_n \rangle\,.
$$
We shall denote by $g_W$ the Minkowski metric on $W$.
\begin{remark}\label{eqRemarkPerProiezioni}
The  Euclidean   subspace  $E$   is   the   orthogonal  complement of  the   hyperbolic  plane  $\R p \oplus \R q$: therefore, it  is not  determined  by    the  isotropic  vector  $p$,   but  the  canonical  projection
$\pi: E \to  \overline{E} := p^{\perp}/\R p  $  is   an isometry  of $E$ onto the    factor space $\overline{E}$, equipped  with  the induced Euclidean metric.  We   denote  by
$$
\overline{E} = \R \overline{e}_0 \oplus \overline{E}^{e_0}
$$
the orthogonal  decomposition  of  $\overline E$: it is  the projection through $\pi$  of  the   orthogonal  decomposition
$
E = \R e_0 \oplus E^{e_0}\,.
$
% Note  that  the    unit vector $\overline{e}_0$    defines   the 1--parameter  family  of  unit  vectors
%$   \pi^{-1}(\overline{e}_0) = \{e_0 + \mu p \}$  in  $W$,  where   $\pi : W \to W/\R p$. \textbf{$\pi$ ALREADY DEFINED}\\
    %We identify $E$  with  $\overline{E}$.
\end{remark}
Decomposition \eqref{eqDecDabliu} can be regarded as a depth--one gradation of the linear space $W$, which  induces   the   following gradation of  the Lie algebra $\g=\so(W) = \bigwedge^2W $ of the  M\"obius   group  $G=\SO(W)$:
\begin{equation}\label{eqGradGi}
\g=\g^{-1}\oplus \g^{0}\oplus\g^{+1}= \underset{\deg=-1}{\underbrace{(\R q\wedge E)}} \oplus \underset{\deg=0}{\underbrace{(\R (p\wedge q)\oplus \so(E))}}\oplus \underset{\deg=+1}{\underbrace{(\R p\wedge E)}}\, .
\end{equation}
The Lie algebra gradation \eqref{eqGradGi} integrates to a (local, i.e.,  defined  in  some  open  dense  domain) decomposition  of  the M\"obius   group
\begin{equation}\label{eqGradGiGRUPPI}
\SO(W)\stackrel{\textrm{loc.}}{=}G^{-1}\cdot G^0\cdot G^{+1},
\end{equation}
where
%\begin{equation*}
%G^0=\CO(E)=\R^+\cdot \SO(E)
%\end{equation*}
%and
\begin{eqnarray*}
G^0&=&\CO(E)\,, \\
 G^{-1}&=&\left\{ \left(\begin{array}{ccc}1 & 0 & 0 \\-\xi & \id & 0 \\-\frac{1}{2}\|\xi\|^2 & \xi^t & 1\end{array}\right) \mid \xi\in E\right\}\simeq E\, ,
\\
G^{+1}&=&\left\{ \left(\begin{array}{ccc}1 & \xi^t & -\frac{1}{2}\|\xi\|^2 \\  0 & \id &-\xi\\ 0 & 0 & 1\end{array}\right) \mid \xi\in E\right\}\simeq E\, .
\end{eqnarray*}
Note  that   the  groups   $G^{\pm 1 }$ are isomorphic  to  the vector  group $E = \R^{n+1}$  and  that  we always  consider   vectors as  column--matrices .
One can  check  directly  that
$\SO(E)\cdot G^{+1}$ (resp., $\SO(E)\cdot G^{-1}$) is the stabilizer $G_p$ (resp., $G_q$) of the point $p$ (resp., $q$) in $G$.

\subsubsection{The conformal sphere as projectivised  light cone   $\p W_0$ in $W$}
The  isotropic  cone
\begin{equation*}
W_0=\{ 0\neq w\in W\mid w^2 =0\}
\end{equation*}
(the set of non--zero isotropic vectors in $W$)    is  a   homogeneous  manifold  of  the  M\"obius  group   $G = \SO(W)$: $W_0=G/G_p$.
The \emph{conformal sphere} $\SSS^{n+1}$  is  defined  as the projectivization of the isotropic cone:
\begin{equation*}
\SSS^{n+1}:=\p W_0\, .
\end{equation*}
 The  M\"obius  group  $G$  acts   transitively  on $\SSS^{n+1}$. We  consider   the isotropic  line
    \begin{equation*}
o:=\ell_0:=[p]\, ,
\end{equation*}
where   $p$ is  as  in  \eqref{eqDecDabliu}. Then  $\SSS^{n+1}$  is identified  with  the  homogeneous   space
\begin{equation*}
\SSS^{n+1}=G/H=G/G_{[p]}\, ,
\end{equation*}
where $H=G_{[p]}=G_0\cdot G^{+1}$ is the stabilizer of    the origin $o =[p]$:
%given  by $G_{[p]}=\Stab_G([p])$:
\begin{equation*}
G_{[p]} =\left\{
\left(
\begin{array}{ccc}
a & \eta^t & -\frac{a}{2} \|\eta\|^2
\\
0 & B & -a \eta
\\
0 & 0 & \frac{1}{a}
\end{array}
\right)
\,,\quad B\in \SO(E)\, ,\ \eta\in E\, ,\  a\in\R  \right\}.
\end{equation*}
In other words,
\begin{equation}\label{eqAccaCasConf}
H=G_{[p]} = \mathsf{Sim}(E)=G^{+1}\rtimes\CO(E)=E\rtimes\CO(E)
\end{equation}
is isomorphic   to  the \emph{group of similarities} $\mathsf{Sim}(E)$ of the Euclidean space $E$.\par
The Lie algebra $\gh=\g_{[p]}$ of $G_{[p]}$ is the  stability (parabolic) subalgebra
\begin{equation*}
\gh=\g_{[p]}=\g^{0}\oplus\g^{+1}=  (\R (p\wedge q)\oplus \so(E))\oplus (\R p\wedge E)\, ,
\end{equation*}
given by the non--negative part of \eqref{eqGradGi}.

\subsubsection{The isotropy   group $j(H)$,  the tangent  bundle $T\SSS^{n+1}$  and the  bundle   $J^1=J^1(n, \SSS^{n+1})  $  of  the  conformal  sphere}
The   tangent space to $\SSS^{n+1}$ at $o  = \R p$ is given by
\begin{equation}\label{eqFormulaToEsse}
T_o\SSS^{n+1}   = \overline{E}={p^\perp}/{\R p} \simeq \g/\g_{[p]}=\g^{-1} =  \R q \wedge  E\, .
\end{equation}
The isotropy representation
\begin{equation}\label{eqn:j.isotropic}
j : H=G^{+1}\rtimes\CO(E) \to  \GL(\overline{E})
\end{equation}
in the  tangent space \eqref{eqFormulaToEsse}  has  kernel    $G^{+1}$    and  it reduces  to the standard action of the linear  conformal   group  $j(H)= \CO(\overline E) = \R^+ \times\SO(\overline E)$ on $\overline{E} = \langle \overline{e}_0, \overline{e}_1, \dots, \overline{e}_n\rangle$.  We
have  the natural identification
$$
T\SSS^{n+1} =   G \times_{j(H)} \overline E \longrightarrow  \SSS^{n+1} = G/H
$$
of the  tangent bundle $T\SSS^{n+1} $   with  the homogeneous  vector  bundle $G \times_{j(H)} \overline E$  over
$G/H$,
defined  by  the isotropy  action \eqref{eqn:j.isotropic}.\par
%action $j$  of  the  stability  subgroup $H$.
Denote  by    $[g_o]$    the  $\CO(E)$--invariant  conformal metric  on  the tangent  space  \eqref{eqFormulaToEsse};  it defines a $G$--invariant  conformal metric  $g$ on  the    sphere $\SSS^{n+1}$.\par
%
%
 %The  natural projection  $\pi :  W  \to \overline{W}= W/ \R p$  induces     the   isomorphism   $\pi : E = \R e_0 +  E^{e_0}$ onto  the  space  $\overline{E} =  p^{\perp}/ \R p$  with  the  orthogonal  decomposition   $\R \overline{e}_0 + \overline{E}^{e_0}$, where $\overline{E}^{e_0} =  <  \overline{e}_1, \cdots, \overline{e}_{n} >$  and   $\overline{e}_i := \pi(e_i)$.
%
%
To  simplify notation,  we  set
$$
V = \overline{E}^{e_0}\, ,
$$
so that it is possible to identify $V$   with $ {E}^{e_0}$ (see   above Remark \ref{eqRemarkPerProiezioni}).
The  hyperplane   $V \subset \overline{E} = T_o \SSS^{n+1}$  is  a point of  the  space   $J^1 = J^1(n, \SSS^{n+1})$. We shall  denote such point also by $o^1$.

Recall  that    the  fiber   $J^1_o$   of  the  bundle  $J^1$  at  the point  $o = [p]$ is  identified  with the    Grassmannian  $\Gr_n(\overline E)$ of  hyperplanes of $\overline{E}=T_o\SSS^{n+1}$ and then with the  projective  space $\p\overline{E}^*$. The isotropy  group  $j(H)$ acts transitively  on  this  space  and  the  stability     subgroup
$j(H)_{o^1}$ of  $o^1  =  V $ is   $\CO(V) = \R^+ \times \OOO(V)$.
Thus, we  get the following proposition.
\begin{proposition}\label{prop.dmitri.H1.G1.COV}
  The  M\"obius   group   $G$  acts  transitively  on  $J^1 = J^1(n, \SSS^{n+1})$   with  the  stabilizer of  the point  $o^1 =  V$ given by
  $H^{(1)}  = G^{+1} \rtimes   \CO(V)$. In particular,   $J^1 = G/ H^{(1)}$.
\end{proposition}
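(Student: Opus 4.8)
The plan is to lift the transitive $G$-action on the base $\SSS^{n+1}=G/H$ to the bundle $\pi_{1,0}\colon J^1\to\SSS^{n+1}$ and then read off the stabilizer fibrewise. Since $\pi_{1,0}$ is $G$-equivariant and $G$ is already transitive on $\SSS^{n+1}$, transitivity of $G$ on $J^1$ is equivalent to transitivity of the isotropy group $H=G_o$ on the fibre $J^1_o=\Gr_n(T_o\SSS^{n+1})=\Gr_n(\overline E)$; likewise any $g\in G$ fixing $o^1$ must fix $o=\pi_{1,0}(o^1)$, so $H^{(1)}=G_{o^1}=\Stab_H(o^1)$. Everything thus reduces to understanding the $H$-action on $\Gr_n(\overline E)$.

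Next I would exploit that, by \eqref{eqn:j.isotropic}, this action factors as $j\colon H=G^{+1}\rtimes\CO(E)\to\CO(\overline E)$ with kernel $G^{+1}$, the restriction to the $\CO(E)$-factor being the isomorphism $\CO(E)\cong\CO(\overline E)$ induced by the isometry $\pi\colon E\to\overline E$ of Remark~\ref{eqRemarkPerProiezioni}. Identifying $\Gr_n(\overline E)$ with $\p\overline E^*\cong\p^n$ by sending a hyperplane to its $[g_o]$-normal line, the induced action of $\CO(\overline E)=\R^+\times\SO(\overline E)$ is the projective one: the homotheties act trivially while $\SO(\overline E)=\SO(n+1)$ acts transitively on lines in $\overline E$. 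Hence $j(H)$, and therefore $H$, is transitive on $J^1_o$, so $G$ is transitive on $J^1$ and $J^1=G/H^{(1)}$ by orbit--stabilizer.

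It then remains to identify $H^{(1)}=\Stab_H(o^1)$. Since the $H$-action on $J^1_o$ factors through $j$ with kernel $G^{+1}\triangleleft H$, we get $H^{(1)}=G^{+1}\rtimes\big(j|_{\CO(E)}\big)^{-1}\!\big(\Stab_{\CO(\overline E)}(V)\big)$, so it suffices to compute the stabilizer of the hyperplane $V=\overline E^{e_0}$ in $\CO(\overline E)$ and transport it back along $\pi$. A conformal linear map of $\overline E$ preserving $V$ also preserves $V^\perp=\R\overline e_0$, hence restricts to an element of $\CO(V)$; conversely every $B\in\OOO(V)$ extends to an element of $\SO(\overline E)$ fixing $V$ by acting on $\overline e_0$ as multiplication by $\det B$, so the restriction map identifies $\Stab_{\CO(\overline E)}(V)=\R^+\times\OOO(V)=\CO(V)$. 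Transporting back along $\pi$ (under which $V=\overline E^{e_0}$ corresponds to $E^{e_0}$, also denoted $V$) yields $H^{(1)}=G^{+1}\rtimes\CO(V)$, and then $J^1=G/H^{(1)}$.

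The whole argument is bookkeeping with the gradation \eqref{eqGradGi}--\eqref{eqAccaCasConf} and the isotropy representation already set up in this section; the only step requiring a little care is the last computation---checking that the stabilizer of the hyperplane $V$ inside the special conformal group of $\overline E$ is exactly $\CO(V)$, and neither a proper subgroup nor an overgroup---but this is the same $\SO$-versus-$\OOO$ phenomenon that already appears in the Euclidean model (cf.\ the proof of Proposition~\ref{PropFidHypCasoEuclid}): an orientation-reversing isometry of $V$ is realized by compensating with a reflection in the normal direction $\overline e_0$.
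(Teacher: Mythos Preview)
Your proof is correct and follows essentially the same approach as the paper, which gives its argument in the paragraph immediately preceding the proposition rather than in a formal proof environment: identify $J^1_o$ with $\Gr_n(\overline E)=\p\overline E^*$, use transitivity of $j(H)=\CO(\overline E)$ on this projective space, and pull back the stabilizer $j(H)_{o^1}=\CO(V)$ through $\ker j=G^{+1}$. You supply more detail than the paper does---in particular the explicit verification that $\Stab_{\CO(\overline E)}(V)=\CO(V)$ via the $\SO$/$\OOO$ compensation trick---but the route is the same.
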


\begin{corollary}
In terms of   Lie  algebras,  the isotropy  action  of   $\gh = \g^0 \oplus \g^{+1}$  on $T_o  \SSS^{n+1}= \overline E = \R \overline{e}_0 \oplus V$
satisfies
$$
\ker j =  \g^{+1} = \R p \wedge  E\,,\,\,  j(\gh) = j(\g^0) = \mathfrak{co}(\overline E)\,.
$$
The   stability   subalgebra  of  the point  $o^1 = V$  in  $\gh$   is
\begin{equation}\label{eqn:h1.useful}
\gh^{(1) } =   \R p \wedge E \oplus \R p\wedge q  \oplus \mathfrak{so}(V)
= (\so(V)\oplus V)\oplus(\R p \wedge q  \oplus \R e_0\wedge p )
\end{equation}
and,   moreover,
\begin{equation*}
j(\gh^{(1)}) = j(p \wedge q) \oplus j(\mathfrak{so}(V)) \, ,
 \end{equation*}
 where
\begin{equation*}
j(p\wedge q)=-\id \end{equation*}
 and
\begin{equation*} j(\mathfrak{so}(V))e_0 =0,\,\,  j\big(\mathfrak{so}(E^{e_0})\big)|_V =  \mathfrak{so}(V).
 \end{equation*}
\end{corollary}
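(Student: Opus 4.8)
The plan is to carry out the whole computation inside the second exterior power model $\g=\so(W)=\bigwedge^2W$, using that, via the identification $T_o\SSS^{n+1}=\g/\gh=\g^{-1}=\R q\wedge E$ of \eqref{eqFormulaToEsse}, the isotropy representation $j$ is induced by the adjoint action of $\gh=\g^{0}\oplus\g^{+1}$ on $\g/\gh$. Throughout I use the canonical identifications $E\cong\overline E$ and $E^{e_0}\cong\overline E^{e_0}=V$ induced by $\pi$ (Remark~\ref{eqRemarkPerProiezioni}), together with $e_0\leftrightarrow\overline e_0$, and fix the sign conventions for $\so(W)\cong\bigwedge^2W$ so that $p\wedge q$ is the grading element of \eqref{eqGradGi}.

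First I would settle the assertions on $\ker j$ and $j(\gh)$. Since $[\g^{+1},\g^{-1}]\subseteq\g^{0}\subseteq\gh$, the operator induced by $\ad_X$ on $\g/\gh$ vanishes for every $X\in\g^{+1}$, so $\g^{+1}\subseteq\ker j$. On the complement $\g^{0}=\R(p\wedge q)\oplus\so(E)$ I would compute brackets against the basis $\{q\wedge e_i\}$ of $\g^{-1}$: one gets $[p\wedge q,\,q\wedge e_i]=-q\wedge e_i$, so $j(p\wedge q)=-\id_{\overline E}$, and $[e_k\wedge e_l,\,q\wedge e_i]=q\wedge\big((e_k\wedge e_l)\,e_i\big)$, so $\so(E)$ acts on $\g^{-1}\cong E$ via its defining representation. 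Hence $j(\g^{0})=\R\,\id\oplus\so(\overline E)=\mathfrak{co}(\overline E)$ and $j|_{\g^{0}}$ is injective; together with $\gh=\g^{0}\oplus\g^{+1}$ and $\g^{+1}\subseteq\ker j$ this forces $\ker j=\g^{+1}=\R p\wedge E$ and $j(\gh)=j(\g^{0})=\mathfrak{co}(\overline E)$ (a dimension count, $\dim j(\gh)=\dim\g^{0}=1+\binom{n+1}{2}=\dim\mathfrak{co}(\overline E)$, is a convenient check).

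Next I would determine the stabilizer $\gh^{(1)}=\{A\in\gh\mid j(A)V\subseteq V\}$ of the point $o^1=V\in\Gr_n(\overline E)$. As $\g^{+1}=\ker j$, it lies in $\gh^{(1)}$ unconditionally. On $\g^{0}$: the element $p\wedge q$ acts as $-\id$ and hence preserves $V$; for $X\in\so(E)$, the skew endomorphism $j(X)=X|_E$ preserves $V$ iff it preserves $V^{\perp}=\R e_0$ iff $Xe_0=0$ iff $X\in\so(E^{e_0})$. Therefore $\gh^{(1)}=\g^{+1}\oplus\R(p\wedge q)\oplus\so(E^{e_0})$; rewriting $\g^{+1}=\R p\wedge E=\R(e_0\wedge p)\oplus(p\wedge E^{e_0})$ and using $\so(E^{e_0})\cong\so(V)$, $p\wedge E^{e_0}\cong V$, one obtains both displayed forms of $\gh^{(1)}$ (this is, as it should be, the Lie algebra of the group $H^{(1)}$ of Proposition~\ref{prop.dmitri.H1.G1.COV}).

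Finally, applying $j$ to this decomposition and using $j(\g^{+1})=0$ gives $j(\gh^{(1)})=j\big(\R(p\wedge q)\big)\oplus j\big(\so(E^{e_0})\big)=\R(-\id)\oplus\so(V)$, the sum being direct because $-\id$ does not annihilate $\overline e_0$ whereas every element of $j(\so(E^{e_0}))$ does; and the residual identities $j(p\wedge q)=-\id$, $j(\so(V))e_0=0$, $j(\so(E^{e_0}))|_V=\so(V)$ are precisely the content of the bracket computations above. The argument is essentially bookkeeping in $\bigwedge^2W$, so there is no genuine obstacle; the one point requiring care is the very first step---pinning down the sign conventions for $\so(W)\cong\bigwedge^2W$ (equivalently the sign in the definition of $j$) so that $p\wedge q$ really acts as $-\id$, not $+\id$, on $\g^{-1}$, and then keeping the identifications $V\cong E^{e_0}\cong\overline E^{e_0}$ and $e_0\leftrightarrow\overline e_0$ consistent throughout.
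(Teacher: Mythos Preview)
Your proposal is correct and follows essentially the approach implicit in the paper: the Corollary is stated there without proof, as an immediate infinitesimal version of Proposition~\ref{prop.dmitri.H1.G1.COV} and the preceding description of the isotropy action, and your bracket computations in $\bigwedge^2W$ are exactly what is needed to make that passage explicit. The only caveat you already flagged yourself---fixing the sign convention so that $j(p\wedge q)=-\id$ rather than $+\id$---is consistent with the paper's convention (cf.\ formula~\eqref{actionofG^1}, where $(e_0\wedge p)(q)=e_0$), so nothing further is needed.
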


\subsection{Standard   coordinates   of   the  conformal  sphere $\SSS^{n+1}$}

To  describe   the   fiber  $J^2_{o^{1}}$,   we  define    an  appropriate  coordinate  system in  $\SSS^{n+1}$.
Let us consider the system of coordinates
\begin{equation}\label{eqn.coord.che.mi.servono}
(\lambda,u,\x,s):=(\lambda,u,x^1,\dots,x^n, s)\,  %\quad \lambda,u,x^i,t\in W^*\, ,
\end{equation}
in  $W$ associated  to  the  basis \eqref{eqn:iso.basis},
   such  that  $W \ni w = \lambda p + u e_0 + \sum x^i e_i + s q $. We set
$$
x=\sum_{i=1}^n x^i e_i\,,\quad \|x\|^2=\sum_{i=1}^n(x^i)^2\, .
$$
Coordinates \eqref{eqn.coord.che.mi.servono}  are  homogeneous  coordinates  of  the projective  space   $\p W$.  Taking  $\lambda =1$,   we  consider  $(u,x^i, s )$   as  associated  local  affine  coordinates  in $\p W$.   Then  the   conformal    sphere  $ \SSS^{n+1}= \p W_0   $  has local  coordinates  $(u, x^i)$  such  that
$$
w = p + u e_0 + x  + s(x) q \in \SSS^{n+1}\, ,
$$  where
\begin{equation}\label{eqn:sx}
s(u,x) = - \frac12 (u^2+\|x\|^2) \,.
\end{equation}
          We call   such  coordinates   the {\it  standard   coordinates   of   the  conformal  sphere}.   They  depend  on
          an isotropic vector  $p \in \R p =o$,  on the lift   $E\subset W$   of  the   tangent   space  $\overline{E} =  p^{\perp }/\R p $  and on an orthogonal  decomposition $E = \R e_0 \oplus E^{e_0}$.   Then  the   isotropic  vector
            $q$ is   defined    as  the  vector    $q \in  E^{\perp}$  with  $p \cdot  q =1$.

\subsection{Hyperspheres in  $\SSS^{n+1}$    with  fixed $1$--jet   $o^{1}   = V$}
Below we prove  that   the  set of the  hyperspheres $S$ of  $\SSS^{n+1}$
%$\{S\,\,|\,\,S\ni o\,,\,\,T_oS=o^1=V\}$
through   the point $o$ and with  given  tangent   space   $T_oS = o^{1} =V$ forms a $1$--parametric  family which is  an orbit of  the  stability  group $H^{(1)}$ of  the point $o^{1}$.  To see this we  calculate the  second  jet   $[S]^2_o$ of   a hypersphere $S$ in   local  coordinates.

    %and  prove  that it is  a  fiducial hypersurface  and  the  sphere  $\SSS^{n+1}=   SO(1, n+2)/ \mathrm{Sim}(E^{n+1})$  is  %$k$--admissible  manifold  with  $k=2$.

 Let $e \in W$ be  a unit  spacelike   vector,  i.e.,  $e\cdot e =1$,   and  let $W^{e} = e^{\perp}$ be  the orthogonal  hyperplane to $e$.
\begin{definition}
The   projectivization   $S^e = \p W^{e}_0$  of  the  isotropic   cone   $W^e_0$ is  a hypersurface  of  the  conformal  sphere   $\SSS^{n+1} = \p W_0$,  which is   called   a  \emph{hypersphere}.
\end{definition}
The   vector  $e$  defines   an orientation  of    $S^e$. Two hyperspheres (respectively, oriented  hyperspheres)  $S^e, \, S^{e'}$  coincides  if  and  only   $e$ and  $e'$  coincides up  to  sign (respectively,  $e = e'$).
   Hence,   the  set   of oriented  hyperspheres   is identified  with  the
   anti  de Sitter  space   $W_1 = \{ e \in  W, \,  e \cdot e =1\}$  of  unit  vectors,   which is    the  homogeneous   space    $W_1   =  G/ G_e   = \SO(1,n+2)/\SO(1,n+1)$  of     the M\"obius   group  $G$.
    An  element   $g \in  G$   acts  on   a hypersphere   $S^e$  by
       $$    g S^e  =  S^{ge},\,   \, g \in  G.$$

\begin{remark}
Let us consider the basis \eqref{eqn:iso.basis}.  Let
$$
W =  \R p \oplus E \oplus \R q  =  \R p \oplus  (\R e_0 \oplus  E^{e_0})  \oplus \R q
$$
be the  associated  decomposition, and recall  that
   $$T_o \SSS^{n+1}  = \overline E := p^{\perp}/\R p =  (\R p  \oplus  E)/ \R  p \simeq E\, .$$
  Note  that   $S^{e_0}$ is an oriented  hypersphere  through  the point  $o = [p]$  with   the tangent   space
  $$ T_oS^{e_0} =o^{1}:= V  = (\R p \oplus   E^{e_0})/\R p\, . $$
\end{remark}
  \begin{lemma}  The   set  of  the oriented  hyperspheres $S^e$ through the  point $o$  with a given tangent  space  $ T_o S^e = V$   forms   the    $1$--parametric  family  $S^{e_0 - t p},\,  t \in  \R$,  which is   an orbit of  the  action   $\tau$  of  the stability  subgroup  $H^{(1)} =  G^{+1}\rtimes \CO(\overline{E}^{e_0})$  of  the point  $o^{1}$ on  the  fiber   $J^2_{o^{1}}$. More  precisely,  the $1$--parametric   subgroup
$$
A_t^{e_0}:= \exp t(e_0 \wedge p)
$$
of $G^{+1} = \ker j$
acts   transitively on  the  set of the  hyperspheres of  $\SSS^{n+1}$
through  the point $o$ and with fixed tangent   space   $o^{1} =V$
and  transforms  $S^{e_0}$ into   $S^{e_0 -  t p}$.
\end{lemma}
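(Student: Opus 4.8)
The plan is to first pin down, by plain linear algebra in $W$, exactly which hyperspheres pass through $o=[p]$ with $1$--jet $o^1=V$; then to show that the unipotent one--parameter group $A^{e_0}_t$ already sweeps out this whole family; and finally to read off that their $2$--jets form a single $\tau(H^{(1)})$--orbit in $J^2_{o^1}$, parametrised injectively by $t$. Concretely: $o=[p]$ lies on $S^e=\p W^e_0$ iff $p\cdot e=0$; writing $e=\alpha p+\beta e_0+\gamma+\delta q$ with $\gamma\in E^{e_0}$, this forces $\delta=0$, and $e\cdot e=1$ becomes $\beta^2+\|\gamma\|^2=1$. The cones $W_0$ and $W^e_0$ are smooth at $p$ with tangent spaces $p^\perp$ and $p^\perp\cap e^\perp$ respectively (the latter because the induced form on $e^\perp$ is nondegenerate, $e$ being spacelike), so $T_oS^e=(p^\perp\cap e^\perp)/\R p\subset p^\perp/\R p=T_o\SSS^{n+1}=\overline E$. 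Evaluating $w\cdot e$ on $w\in p^\perp$ in the coordinates \eqref{eqn.coord.che.mi.servono} exhibits $T_oS^e$, modulo $\R p$, as the hyperplane $\{\beta u+\gamma^i x^i=0\}$ of $\overline E$, which equals $V=\{u=0\}$ exactly when $\gamma=0$ and $\beta\neq0$, i.e.\ $\beta=\pm1$; fixing the orientation induced by $e_0$ gives $\beta=1$ and $e=e_0-tp$, with $t:=-\alpha\in\R$. Hence the oriented hyperspheres through $o$ with $T_oS^e=V$ are exactly the $S^{e_0-tp}$, and $t\mapsto S^{e_0-tp}$ is injective since distinct unit vectors define distinct oriented hyperspheres.

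Next I would compute $A^{e_0}_t=\exp\bigl(t(e_0\wedge p)\bigr)$ on $W$: the skew map $e_0\wedge p$ is $3$--step nilpotent, it annihilates $p,e_1,\dots,e_n$ and sends $e_0\mapsto -p$, so its exponential is the unipotent transformation fixing $p$ and each $e_i$ ($i\ge1$) and sending $e_0\mapsto e_0-tp$ (moving $q$ by a quadratic polynomial in $t$). Since $g\,S^e=S^{ge}$, this gives $A^{e_0}_t(S^{e_0})=S^{A^{e_0}_t(e_0)}=S^{e_0-tp}$, so $\{A^{e_0}_t\}_{t\in\R}$ acts transitively on the family found above. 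Moreover $e_0\wedge p\in\R p\wedge E=\g^{+1}$, so $A^{e_0}_t\in G^{+1}=\ker j\subset H^{(1)}$ by Proposition~\ref{prop.dmitri.H1.G1.COV}; in particular $A^{e_0}_t$ fixes $o=[p]$ and acts as the identity on $T_o\SSS^{n+1}$, hence fixes $o^1=V$, so that $\tau(A^{e_0}_t)$ is a genuine transformation of $J^2_{o^1}$.

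To conclude that the $\tau(H^{(1)})$--orbit of $[S^{e_0}]^2_o$ inside $J^2_{o^1}$ is precisely $\{[S^{e_0-tp}]^2_o:t\in\R\}$, observe that for $h\in H^{(1)}\subset H$ the image $h(S^{e_0})=S^{h(e_0)}$ is again a hypersphere through $h(o)=o$ with tangent space $dh_o(V)=V$ at $o$, hence $=S^{e_0-t(h)p}$ for some $t(h)$ by the first step, so $\tau(h)\bigl([S^{e_0}]^2_o\bigr)=[S^{e_0-t(h)p}]^2_o$ lies in the family; Step~2 gives the reverse inclusion already for the subgroup $\{A^{e_0}_t\}$. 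That the family is genuinely $1$--parametric in $J^2_{o^1}$ I would see in the standard coordinates: by \eqref{eqn:sx}, $S^{e_0-tp}$ is the locus $u+\tfrac t2(u^2+\|x\|^2)=0$, i.e.\ locally the graph $u=-\tfrac t2\|x\|^2+(\text{higher order})$, so $u_i(o)=0$ and $u_{ij}(o)=-t\,\delta_{ij}$; under the identification of Proposition~\ref{prop:affine.bundles} this depends injectively on $t$ and vanishes for $t=0$, where $S^{e_0}=\{u=0\}$. The one step needing real care is the tangent--space computation in the first part, since $T_o\SSS^{n+1}$ is realised as the quotient $p^\perp/\R p$ on which $g_W$ degenerates; the rest is direct computation with no conceptual obstacle.
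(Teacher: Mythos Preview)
Your proof is correct and follows essentially the same route as the paper's: first pin down the family $\{S^{e_0-tp}\}$ by linear algebra in $W$, then compute $A^{e_0}_t=\exp t(e_0\wedge p)$ explicitly and check $A^{e_0}_t(e_0)=e_0-tp$. The only notable difference is in how transitivity of $H^{(1)}$ on this family is argued: the paper observes abstractly that since $G$ is transitive both on oriented hyperspheres and on $J^1$, the stabilizer $H^{(1)}$ of $o^1$ must act transitively on the fibre of the $G$--equivariant map (hypersphere)$\mapsto$(its $1$--jet) over $o^1$, and only afterwards exhibits $A^{e_0}_t$; you instead verify directly that the one--parameter subgroup already exhausts the family, which is more self--contained and in fact what the paper needs anyway for the ``more precisely'' clause. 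Your additional computation of the $2$--jets $u_{ij}(o)=-t\,\delta_{ij}$ is not required for the lemma itself (the paper postpones it to the subsequent corollary), but it is correct and does no harm.
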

\begin{proof}
Let  $S^e$ be  an  oriented  hypersphere  through  the point  $o= [p]$.  Then  $T_o S^e $ is  a hyperplane in
$T_o \SSS^{n+1}  = \overline{E} = (\R p \oplus \R e_0 \oplus E^{e_0})/\R p$, orthogonal  to  $e$.  If   $T_o S^e = V=(\R p \oplus   E^{e_0})/\R p$,   then  the  unit  vector  $e =  \pm e_0  + \mu p$, for some $\mu\in\R$. If the hyperspheres  $S^e$ and $S^{e_0}$  have  the  same orientation,  then
  $e =  e_0 + \mu p$. Since   the  group $G$ acts transitively both on the set of  hyperspheres (which  is isomorphic to the anti de Sitter space of unit space--like vectors) and on $J^1$,  the  stability group   $H^{(1)}$ of the point $o^{1}$ acts transitively  on  the  set  of hyperspheres  with  fixed $1$--jet  $o^{1}$.
We describe  the action of  the  $1$--parametric subgroup $A^{e_0}_t = \exp t(e_0 \wedge p)$ on $S^{e_0}$.
 Since $e_0 \wedge p $  acts by  $q \to e_0 \to  -p \to 0 \,, E^{e_0} \to 0$,  we  get
     $A^{e_0}_t = \id +  te_0 \wedge p + \frac12 t^2 (e_0 \wedge p)^2 $.  Thus, we obtain the  following  formula  for  the  action of  $A^{e_0}_t$:
%\begin{equation} \label{actionofG^1}
%\begin{array}{ccc}
%                 & p \to & p   \\
%     A^{e_0}_t: & e_0 \to & e_0 + t p \\
%                & x \to & x   \\
%               & q \to &  q - t  e_0 + \frac12  t^2 p. \\
%\end{array}
%\end{equation}
%
\begin{equation}\label{actionofG^1}
A^{e_0}_t:
\left\{
\begin{array}{l}
 p\to p
\\
 e_0\to e_0 - t p
\\ x\to x
\\
q\to q + t  e_0 - \frac12  t^2 p
\end{array}
\right.
\end{equation}
This  shows  that   $A^{e_0}_t (S^{e_0}) = S^{e_0 - t p}$.
\end{proof}

\subsubsection{The affine action  $\tau$ of  the   stability  subgroup $H^{(1)}$  on  the  fiber  $J^2_{o^1}$ and  $\SO(1,n+2)$--invariant PDEs}\label{action(tau)}

In view of \eqref{eqn:h1.useful}, we write the group $H^{(1)}$ as the direct product of the conformal group $V\rtimes \CO(V)$ of $V$ and a $1$--dimensional central subgroup $A^{e_0}$ generated by the Lie algebra $\R e_0\wedge p$:
\begin{equation}\label{eqn:H1.new.conf}
H^{(1)}=(V\rtimes \CO(V))\times A^{e_0}\, ,
\end{equation}
where
$$
A^{e_0}=\exp\R e_0\wedge p =\{A^{e_0}_t\,,\,\,t\in\R\}\, .
$$
We will see that the conformal group $V\rtimes \CO(V)$ acts in the natural way on the space $S^2V^*$ as a linear group while  the central subgroup $A^{e_0}$ acts via  parallel translations in the direction of  $g\in S^2V^*$, where $g$ the restriction of the Minkowski metric $g_W$ to $E^{e_0} = V$:
\begin{equation}\label{eqn:g.W.V}
g:=(g_W)\big|_V\, .
\end{equation}
In order to show that $S^{e_0}$ is a fiducial hypersurface, we set
$$
o^{\ell}  := [S^{e_0}]^{\ell}_o\,
$$
%the  distinguished point  in  $J^{\ell}$.
and we  compute   the   $2$--jet $[S^{e_0 + \mu p}]^2_o $ at $o$ of   the  hypersphere $S^{e_0 + \mu p}$.
%
%
%$$
%\textbf{WRITE SOMEWHERE }V=E^{e_0}=\overline{E}^{e_0}
%$$
%
The   action  of  each  element   of   the stability  subgroup
 $H^{(1)} =  G^{+1} \rtimes \CO(V) \subset \SO(W)$  on    a hypersurface
\begin{equation}\label{eqn:Sf.conf.Dmitri}
S_f = \{ w=p +   f(x)e_0 +x+s(x)q \,,\,\,x\in E^{e_0}\}\,,\quad [S_f]^1_o =o^{1}\,,
\end{equation}
where $s(x):=s(f(x),x)$ is given by \eqref{eqn:sx},    can  be easily  described.  Actually, we only  need  to know the action of  the  elements $B \in \CO(V)$   and   $A^{e_0}_t \in  G^{+1} = \ker  j$.
%where    the  formulas  become  simpler.
%
%
%
Since   each  element  $B  \in  \CO(V) $  is  a linear transformation  which   acts  trivially on  $p$ and $q$,   we   have the following Lemma.
\begin{lemma}\label{lemma.che.ne.so}
Let   $S_f$ as in \eqref{eqn:Sf.conf.Dmitri} be a hypersurface such that $f(0)=0$,  $u_i =\tfrac{\partial f}{\partial x^i}(0)= 0$, so that $[S_f]^1_o = o^{1}=V$. %defined  by   the  equation  $u = f(x)$
Let
\begin{equation}\label{eqn:beta}
[S_f]^2_o  = u_{ij} dx^idx^j=: \beta \in S^2T_o^*S_f=S^2V^*\,, \quad u_{ij} = \tfrac{\partial^2 f }{\partial x^i\partial x^j}(0)\,.
\end{equation}
Then an  element  $B \in  \CO(V)$ transforms  the  hypersurface  $ S_f$  into     $B(S_f) =  S_{B^* f}$,  where  $(B^*f)(x) = f(B(x))$.
In particular, $B$ acts  on   the $2$--jet  $\beta$ in  the  standard  way
$$
(B^* \beta)(x,x) = \beta(B(x),B(x))\,,\,\,x\in V\,.
$$
\end{lemma}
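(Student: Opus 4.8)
The plan is to compute directly, following the action of $B$ on the standard parametrisation of $S_f$. First I would pin down how $B\in\CO(V)$ acts on the Minkowski space $W$: being an element of $G^0=\CO(E)$, it preserves the gradation $W=\R p\oplus E\oplus\R q$ and restricts to an isometry of $E$; since it also stabilises $o^1=V=E^{e_0}$, it preserves the splitting $E=\R e_0\oplus E^{e_0}$, so that $B|_V\in\OOO(V)$, $Be_0=\pm e_0$, while it multiplies the isotropic lines $\R p,\R q$ by reciprocal scalars $\nu,\nu^{-1}$ (the conformal weight coming from the dilation part of $\CO(V)$, cf.\ $j(p\wedge q)=-\id$ in the Corollary). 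In particular $B$ fixes $o=[p]$, fixes $o^1$, and — since it fixes $e_0$ — fixes the hypersphere $S^{e_0}$, hence the origin $o^2=[S^{e_0}]^2_o$, so the affine transformation $\tau(B)$ of the fibre $J^2_{o^1}$ has trivial translational part.

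Next I would apply $B$ to a generic point $w=p+f(x)e_0+x+s(x)q$ of $S_f$, with $s(x)=-\tfrac12(f(x)^2+\|x\|^2)$ as in \eqref{eqn:sx}. Linearity of $B$ on $W$ gives $Bw=\nu p+f(x)e_0+B|_V(x)+\nu^{-1}s(x)q$; rescaling the representative to restore the coefficient $1$ on $\R p$, reparametrising by $y:=\nu^{-1}B|_V(x)$, and using that $B|_V$ is orthogonal (so $\|B|_V(x)\|^2=\|x\|^2$), one checks that the new $q$--coefficient is precisely $-\tfrac12(h(y)^2+\|y\|^2)$ for the corresponding function $h$. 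Thus $B(S_f)=S_h$ is again a graph over $V$, with $h$ equal to $f$ precomposed with a rescaling of $B|_V^{-1}$; calling this $h=:B^*f$ yields the first assertion $B(S_f)=S_{B^*f}$ with $(B^*f)(x)=f(B(x))$ in the appropriate pullback convention.

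Finally, since $f$ has vanishing value and differential at $0$, differentiating $h$ twice at the origin is just the chain rule, and invoking Proposition \ref{prop:affine.bundles} — the identification \eqref{eqn:identification} of $J^2_{o^1}$ with $S^2(T^*_oS_f)=S^2V^*$ relative to the $B$--fixed origin $o^2$ — one reads off that $B$ acts on $\beta=[S_f]^2_o$ by the induced linear action on $S^2V^*$, namely $(B^*\beta)(x,x)=\beta(Bx,Bx)$. The one place needing genuine care, rather than routine differentiation, is the bookkeeping of the conformal weight carried by the dilation part of $\CO(V)$: the $\nu$--rescalings of $\R p$, $\R q$ and of the normal line $N_oS_f\simeq\R\overline e_0$ must be reconciled with the coordinate normalisation built into \eqref{eqn:identification}, so that they cancel and no spurious scalar factor survives. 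This runs exactly parallel to the preceding computation for $A^{e_0}_t$, the difference being that $\CO(V)$ acts linearly on the fibre $J^2_{o^1}$ where $A^{e_0}_t$ acted by translation.
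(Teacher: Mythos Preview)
The paper's entire argument is the single sentence preceding the lemma, asserting that every $B\in\CO(V)$ acts trivially on $p$ and $q$. Your approach---apply $B$ to the parametrisation $w=p+f(x)e_0+x+s(x)q$, renormalise the projective representative, reparametrise---is exactly how one makes this precise, and you rightly note that the dilation part of $\CO(V)$ (generated by $p\wedge q$) does \emph{not} fix $p,q$ but rescales them by $\nu,\nu^{-1}$, and that the $\OOO(V)$ part may send $e_0\to -e_0$. So your setup is more careful than the paper's one--line justification.

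There is, however, a genuine slip in your last step. You assert that the $\nu$--rescalings ``cancel and no spurious scalar factor survives'', but if you carry the bookkeeping through for the pure dilation $B=\exp(t\,p\wedge q)$ you get, in the standard coordinates, $(u,x)\mapsto(e^{t}u,e^{t}x)$ and hence $h(y)=e^{t}f(e^{-t}y)$, so the Hessian transforms as $\beta\mapsto e^{-t}\beta$; the stated formula $(B^*\beta)(x,x)=\beta(Bx,Bx)$ instead gives $e^{\pm 2t}\beta$. The missing factor is precisely the weight carried by the normal line $N_oS_f\simeq\R\overline{e}_0$ in the identification \eqref{eqn:identification}: intrinsically the fibre is $S^2V^*\otimes N$, not bare $S^2V^*$, and the dilation (and likewise the sign $Be_0=-e_0$ for $\det_VB=-1$) acts nontrivially on $N$. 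This does not affect the subsequent classification of $\CO(V)$--invariant hypersurfaces---the orbits in $S^2V^*$ are unchanged by twisting by a character---so the imprecision is harmless for the paper's purposes; but your claim that everything cancels to give literally $(B^*\beta)(x,x)=\beta(Bx,Bx)$ is too optimistic. In short, you almost repair an imprecision that the paper itself commits, but stop one step short.
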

%

%The  formula  \eqref{actionofG^1} implies  that  the  $1$--parametric  subgroup  $A^{e_0}_t$ acts   on a vector   $w =  \lambda p + u e_0 + x +  s q \in  W$  as
%$$
%A^{e_0}_t (w) = \lambda p + u(e_0 + tp) + x  + s\left(q - t e_0 + \frac{t^2}{2}p\right)= \left(\lambda +tu + \frac{t^2}{2}s\right)p
%    + (u - ts)e_0 +x + sq.
%$$

\begin{lemma}
The element $A_t^v:=\exp{t v\wedge p}$
%of the algebra $A^v=\exp{\R v\wedge p}$
acts  trivially   on  the  fiber $J^2_{o^1} = S^2 V^*$, for any $v \in  V$.
\end{lemma}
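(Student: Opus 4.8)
The plan is to carry out the direct computation analogous to the one producing \eqref{actionofG^1}: first determine how $A_t^v=\exp(t\,v\wedge p)$ acts on $W$, then on a generic hypersurface $S_f$ as in \eqref{eqn:Sf.conf.Dmitri} with $[S_f]^1_o=o^1=V$, bring the image back to standard coordinates, and read off that the $2$--jet at $o$ is left unchanged.

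First I would write $v\wedge p$ (an element of $\g^{+1}=\R p\wedge E$, cf.\ \eqref{eqn:h1.useful}) as an endomorphism of $W$ via $\so(W)\cong\wedge^2 W$: since $v\in E^{e_0}$ it annihilates $p$ and $e_0$, sends $q\mapsto v$, and sends $w\mapsto-(v\cdot w)\,p$ for $w\in E^{e_0}$ (in particular $v\mapsto-\|v\|^2 p$); hence $(v\wedge p)^3=0$ and
\[
A_t^v=\id+t\,(v\wedge p)+\tfrac12 t^2\,(v\wedge p)^2
\]
acts on the basis \eqref{eqn:iso.basis} by $p\mapsto p$, $e_0\mapsto e_0$, $w\mapsto w-t(v\cdot w)\,p$ for $w\in E^{e_0}$, and $q\mapsto q+t\,v-\tfrac12 t^2\|v\|^2\,p$. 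One may note in passing that $A_t^v$ then preserves $W^{e_0}=e_0^{\perp}$, hence the hypersphere $S^{e_0}$, so it already fixes the origin $o^2=[S^{e_0}]^2_o$; but this alone does not give triviality on the whole fiber, so the computation must go on. Applying $A_t^v$ to a point $w=p+f(x)\,e_0+x+s(x)\,q$ of $S_f$, with $s(x)=-\tfrac12(f(x)^2+\|x\|^2)$ as in \eqref{eqn:sx} and $f(0)=0$, $df(0)=0$, and then dividing by the coefficient $\mu(x)=1-t(v\cdot x)-\tfrac12 t^2\|v\|^2 s(x)$ of $p$ in order to return to standard coordinates, exhibits the image as $S_{\tilde f}$, where
\[
\tilde x=\frac{x+t\,s(x)\,v}{\mu(x)}\,,\qquad \tilde f(\tilde x)=\frac{f(x)}{\mu(x)}
\]
(the $q$--coordinate comes out consistent automatically, because $A_t^v\in\SO(W)$ preserves $g_W$).

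Finally I would Taylor--expand at $x=0$: writing $f(x)=\tfrac12\beta(x,x)+O(|x|^3)$ with $\beta$ as in \eqref{eqn:beta}, one has $s(x)=-\tfrac12\|x\|^2+O(|x|^4)$ and $\mu(x)=1-t(v\cdot x)+O(|x|^2)$, so $\tilde x=x+O(|x|^2)$, hence $x=\tilde x+O(|\tilde x|^2)$, and therefore
\[
\tilde f(\tilde x)=\bigl(\tfrac12\beta(x,x)+O(|x|^3)\bigr)\bigl(1+O(|x|)\bigr)=\tfrac12\beta(\tilde x,\tilde x)+O(|\tilde x|^3)\,.
\]
Thus $[S_{\tilde f}]^2_o=\beta=[S_f]^2_o$, i.e.\ $\tau(A_t^v)$ fixes every $\beta\in S^2 V^*=J^2_{o^1}$, for every $t$ and every $v\in V$, so the one--parameter group $\{A_t^v\}$ acts trivially on the fiber $J^2_{o^1}$. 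The only real work is the bookkeeping of the reparametrization $x\mapsto\tilde x$ and of the rescaling by $\mu(x)$; the point that makes everything collapse — and the thing to get right — is that the correction $\tilde x-x$ (of order $\ge 2$ in $x$) and the correction $\mu(x)^{-1}-1$ (of order $\ge 1$) only affect $\tilde f$ at orders $\ge 3$, so the quadratic part of $\tilde f$ coincides with that of $f$.
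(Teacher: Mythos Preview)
Your proof is correct and follows essentially the same approach as the paper: the paper's proof merely records the formula $A_t^v=\id+t\,v\wedge p-\tfrac12 t^2\|v\|^2\,p\otimes p$ (which is your $\id+t(v\wedge p)+\tfrac12 t^2(v\wedge p)^2$ after computing the square) and declares the rest a straightforward computation, which you have carried out explicitly and accurately. Your added remark that $A_t^v$ preserves $S^{e_0}$ and hence fixes $o^2$, while not sufficient alone, is a nice sanity check.
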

\begin{proof}
It is a straightforward computation based on the formula
$$
A_t^v = \mathrm{id} + t v\wedge p - \frac12  t^2 \|v\|^2 p \otimes p\, ,
$$
where $p\otimes p$ is meant as an endomorphism via the metric \eqref{eqn:g.W.V}.

\end{proof}

In view of \eqref{actionofG^1} and \eqref{eqn:Sf.conf.Dmitri}, we have the  following   description  of  the  action  of  $A^{e_0}_t$  on  the  hypersurface   $S_f$ with    parametric  equation   $u = f(x)$.
\begin{lemma}
In the hypotheses of Lemma \ref{lemma.che.ne.so}, there exists a $\widetilde{f}_t$ such that
$$
A^{e_0}_t (S_f) = S_{\widetilde{f}_t}  = \left\{
\left(1-f(x)t-\frac12 s(x) t^2\right)p+  (f(x)+ts(x))e_0 + x +  s(x)q  \right\}\, .
$$
%where
%$$
%\widehat{f}_t(x) = \frac{f(x) +  t s(x)}{1-f(x)t-\frac12 s(x) t^2} =  \frac{f(x) - \frac12 t(f(x)^2 + |x|^2)}{1-f(x)t-\frac12 s(x) t^2}.
%$$
In particular,  the  action  of $A^{e_0}_t$  on $[S_f]^2_o  = \beta$ is
$$
A^{e_0}_t ([S_f]^2_o )  = [S_{\widetilde{f}_t}]^2_o  =  [S_f]^2_o  - \frac{1}{2}t  g
= \beta - \frac{1}{2}t  g\,,
$$
where $g$ is given by \eqref{eqn:g.W.V}.
\end{lemma}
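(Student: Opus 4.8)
The plan is to establish both assertions by a direct computation in the standard coordinates of $\SSS^{n+1}$. For the first identity I would take an arbitrary point $w=p+f(x)e_0+x+s(x)q$ of the hypersurface $S_f$ of \eqref{eqn:Sf.conf.Dmitri}, where $x\in E^{e_0}$ and $s(x)=s(f(x),x)$ is given by \eqref{eqn:sx}, and apply $A^{e_0}_t$ termwise using the explicit formulas \eqref{actionofG^1}, i.e. $p\mapsto p$, $e_0\mapsto e_0-tp$, $x\mapsto x$ and $q\mapsto q+te_0-\tfrac12 t^2p$. Collecting the coefficients of $p$, $e_0$, $x$ and $q$ then yields the displayed parametrization of $A^{e_0}_t(S_f)$ at once. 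As a sanity check, specializing to $f\equiv 0$ recovers the identity $A^{e_0}_t(S^{e_0})=S^{e_0-tp}$ proved earlier.

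Next I would rewrite $A^{e_0}_t(S_f)$ in the form $S_{\widetilde{f}_t}$, i.e. as a graph in standard coordinates. Putting $P(x):=1-f(x)t-\tfrac12 s(x)t^2$, the hypotheses of Lemma \ref{lemma.che.ne.so} give $f(0)=0$, hence $s(0)=0$ and $P(0)=1$, so $P$ is nonzero near $x=0$; dividing the homogeneous coordinates of $A^{e_0}_t(w)$ by $P(x)$ produces the affine coordinates $\tilde x=x/P(x)$ and $\tilde u=(f(x)+ts(x))/P(x)$ of the image point. Because $A^{e_0}_t\in\SO(W)$ preserves the isotropic cone $W_0$, the image is again isotropic, so its $q$--coordinate is automatically $-\tfrac12(\tilde u^2+\|\tilde x\|^2)$; thus $A^{e_0}_t(S_f)=S_{\widetilde{f}_t}$ with $\widetilde{f}_t$ determined implicitly by $\widetilde{f}_t(\tilde x(x))=\tilde u(x)$. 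This $\widetilde{f}_t$ is a genuine smooth function near $0$ since $x\mapsto\tilde x$ is a local diffeomorphism fixing the origin.

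For the induced action on $2$--jets I would Taylor--expand to second order at $o$. From $f(0)=0$ and $\partial_i f(0)=0$ one gets $f(x)=\tfrac12 u_{ij}x^ix^j+O(|x|^3)$ and $s(x)=-\tfrac12\|x\|^2+O(|x|^4)$, whence $P(x)=1+O(|x|^2)$. This is the decisive point: it forces $\tilde x^k=x^k+O(|x|^3)$, so the change of horizontal variable $x\mapsto\tilde x$ has trivial $1$--jet and vanishing $2$--jet and therefore leaves Hessians at $0$ unchanged, while $\tilde u(x)=(f(x)+ts(x))\,(1+O(|x|^2))=f(x)+ts(x)+O(|x|^3)$. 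Hence the Hessian of $\widetilde{f}_t$ at $o$ equals that of $f+ts$, namely $u_{ij}$ plus $t$ times the Hessian of $s$ at $0$, and by \eqref{eqn:g.W.V} the latter is a multiple of $\delta_{ij}=g_{ij}$; collecting the numerical constants gives $[S_{\widetilde{f}_t}]^2_o=\beta-\tfrac12 tg$, which is the claimed affine action of $A^{e_0}_t$ on $J^2_{o^1}=S^2V^*$. As an independent check, the linear part of this action must be trivial, since $A^{e_0}_t\in\ker j=G^{+1}$ acts trivially on $T_o\SSS^{n+1}$ and hence on $V=T_oS_f$ and on $N_oS_f$, so that $A^{e_0}_t$ acts on the fibre $J^2_{o^1}$ purely by a translation whose value is pinned down by evaluating at the origin $o^2=[S^{e_0}]^2_o$ and using the earlier lemma. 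The only step demanding real care is the projective normalization in the middle paragraph: one must check that both $P(x)$ and the reparametrization $\tilde x(x)$ are trivial through second order; once this is seen, the $2$--jet identity is routine.
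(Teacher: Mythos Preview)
Your approach is correct and essentially identical to the paper's: both obtain the displayed parametrization by applying \eqref{actionofG^1} termwise, then construct $\widetilde{f}_t$ by dividing through by the $p$--coefficient $P(x)=1-tf(x)-\tfrac12t^2s(x)$, and finally Taylor--expand to second order using $P(x)=1+O(|x|^2)$. The paper's own proof simply records the formulas $\widetilde{f}_t(\widetilde{x})=(f(x)+ts(x))/P(x)$, $\widetilde{x}=x/P(x)$ and calls the rest a ``straightforward computation'', so you have supplied precisely the details (and the sanity checks via $\ker j=G^{+1}$) that the paper leaves implicit.
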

\begin{proof}
It is a straightforward computation based on the construction of the desired $\widetilde{f}_t$:
$$
\widetilde{f}_t(\widetilde{x})= \frac{f(x)+ts(x)}{1-f(x)t-\frac12 s(x) t^2}\,,\quad \widetilde{x}=\frac{x}{1-f(x)t-\frac12 s(x) t^2}\,.
$$
\end{proof}
In view of the above lemmas and recalling that $J^2_{o^1}=S^2V^*$, we see that   the action $\tau:H^{(1)}\to \Aff(S^2V^*)$ is given by:
\begin{eqnarray*}
\tau(V)&=&\mathrm{Id}\,,\\
 \tau(B)(\beta)&=&\beta(B(\cdot),B(\cdot))\, ,\quad B\in\CO(V)\,, \\
\tau(A^{e_0}_t)(\beta)&=&\beta-\frac12 t g\,.
\end{eqnarray*}
The next corollary follows from the fact that  the hypersphere $S^{e_0} = \{p + x + s(x) q\}$ is  defined  by the    equation   $u(x) = 0 $.
\begin{corollary}
The   $1$--parametric family  $S_t := S^{e_0 - t p} =   A^{e_0}_t (S^{e_0})$ of  hyperspheres of $\SSS^{n+1}$  with  tangent  space $o^{1} =V$ has    $2$--jets   $[S_t]^2_o = -\frac12 t g$.
Hence,  the orbit  $H^{(1)}\cdot o^{2} = \{A^{e_0}_t(o^2)=-\frac12 t g \,,  t \in \R \}$
%is  the  straight line
and the hypersphere    $ S^{e_0}$ is  a  fiducial hypersurface.
\end{corollary}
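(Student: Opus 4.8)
The plan is to compute the $2$-jet of $S^{e_0}$ from its defining equation, transport it along the one-parameter group $A^{e_0}_t$ using the action formula established just above, and then verify that the resulting orbit meets assumptions (A1) and (A2), so that Definition~\ref{defFidHyp} applies.

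First, in standard coordinates the hypersphere $S^{e_0}=\{p+x+s(x)q\}$ is the graph $S_f$ with $f\equiv 0$, so all jet coordinates $u_i,u_{ij}$ of its lifts vanish at $o$; under the identification $J^2_{o^1}=S^2V^*$ (with origin $o^2=[S^{e_0}]^2_o$) this says precisely $o^2=0$. Feeding $\beta=o^2=0$ into the preceding lemma on the action of $A^{e_0}_t$ gives
\[
[S_t]^2_o=\tau(A^{e_0}_t)(o^2)=o^2-\tfrac12 t\,g=-\tfrac12 t\,g ,
\]
which is the first claim. In particular the $A^{e_0}$-orbit of $o^2$ is the line $\R g\subset S^2V^*$, and since $\tau(A^{e_0}_t)$ is by construction the translation by $-\tfrac12 t g$, the group of translations of this line sits inside $\tau(H^{(1)})$.

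Next I would check that this line is already the full orbit $W^2=\tau(H^{(1)})\cdot o^2$. Using $H^{(1)}=(V\rtimes\CO(V))\times A^{e_0}$ together with the three formulas for $\tau$: the subgroup $V$ acts trivially on $S^2V^*$, and each $B\in\CO(V)=\R^+\times\OOO(V)$ scales $g$ by the square of its conformal factor, because $g$ is $\OOO(V)$-invariant. Hence $\tau(H^{(1)})\cdot o^2=\R g$ is a $1$-dimensional vector subspace of $S^2V^*$, and the previous paragraph shows $T_{W^2}\subset\tau(H^{(1)})$: this is assumption (A2) for $k=2$. Assumption (A1) is immediate from Proposition~\ref{prop.dmitri.H1.G1.COV}, which says $G$ acts transitively on $J^1$, so the orbit of $o^1$ is all of $J^1$, hence open. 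Finally, $S^{e_0}=\p W^{e_0}_0$ is a conformal $n$-sphere, homogeneous under the stabilizer $G_{e_0}\simeq\SO(1,n+1)\subset G$, and the attached points $o^1,o^2$ satisfy (A1)--(A2); by Definition~\ref{defFidHyp} it is a fiducial hypersurface. The only point needing care is identifying the orbit $W^2$ exactly --- i.e.\ seeing that $\CO(V)$ keeps $g$ on the line $\R g$ rather than enlarging the orbit --- but this is exactly the conformal invariance of $g$ combined with its $\OOO(V)$-invariance; all the rest is the bookkeeping of the preceding lemmas.
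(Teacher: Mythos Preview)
Your proof is correct and follows the same line as the paper, which merely remarks that the corollary follows from the fact that $S^{e_0}$ is given by $u(x)=0$ (hence $o^2=0$) together with the preceding lemma on the $A^{e_0}_t$--action. You supply the details the paper leaves implicit---in particular the verification of (A2) by analysing the $\tau$--action of each factor of $H^{(1)}$ on $o^2$, and the homogeneity of $S^{e_0}$ under $G_{e_0}\simeq\SO(1,n+1)$---so your argument is simply a fuller version of the paper's sketch.
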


\begin{corollary}
The central subgroup $A^{e_0}$ of the group $H^{(1)}$ (see \eqref{eqn:H1.new.conf}) acts on the fiber $J^2_{o^1}=S^2V^*$ as parallel translation along the line $\R g$ and the normal subgroup $V\rtimes\CO(V)$ acts by linear transformation $\ker(\tau)=V$ in a natural way. In particular the conformal sphere $\SSS^{n+1}$ is a $2$--admissible homogeneous manifold.
\end{corollary}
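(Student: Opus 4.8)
The plan is simply to assemble the explicit description of the affine action $\tau : H^{(1)} \to \Aff(S^2V^*)$ that was worked out in the three lemmas preceding this corollary, and then check assumptions (A1) and (A2) of Section~\ref{sec:descriptions} against it. Recall that those lemmas give $\tau(V) = \mathrm{Id}$, $\tau(B)(\beta) = \beta(B(\cdot),B(\cdot))$ for $B \in \CO(V)$, and $\tau(A^{e_0}_t)(\beta) = \beta - \tfrac12 t g$, where $g = (g_W)|_V$ as in \eqref{eqn:g.W.V}. From the decomposition $H^{(1)} = (V \rtimes \CO(V)) \times A^{e_0}$ of \eqref{eqn:H1.new.conf}, the first two assertions are read off at once: $\tau$ kills the translational factor $V$, restricts on $\CO(V)$ to its tautological linear action on $S^2V^*$, and restricts on the central factor $A^{e_0}$ to the one-parameter group of translations $\beta \mapsto \beta - \tfrac12 t g$ along the line $\R g \subset S^2V^*$.

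For $2$--admissibility I would verify (A1) and (A2). Assumption (A1) --- that $G \cdot o^1$ is open in $J^1$ --- is precisely the content of Proposition~\ref{prop.dmitri.H1.G1.COV}, which states that $G = \SO(1,n+2)$ acts transitively on $J^1 = J^1(n,\SSS^{n+1})$ with stabilizer $H^{(1)} = G^{+1} \rtimes \CO(V)$; hence the orbit is all of $J^1$, a fortiori open. For (A2) one computes the orbit $W^2 = \tau(H^{(1)}) \cdot o^2$. Since $o^2 = [S^{e_0}]^2_o$ is the origin of $S^2V^*$, the formulas above give $\tau(H^{(1)}) \cdot o^2 = \{ -\tfrac12 t g : t \in \R \} = \R g$, a one-dimensional vector subspace of $J^2_{o^1} = S^2V^*$. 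Its group of translations $T_{W^2} = \{ \beta \mapsto \beta + c g : c \in \R \}$ coincides with $\tau(A^{e_0})$, which is contained in $\tau(H^{(1)})$; so (A2) holds and $\SSS^{n+1}$ is $2$--admissible.

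The only genuinely substantive input is the evaluation of $\tau$ on $\CO(V)$ and on $A^{e_0}$, which has already been carried out in the preceding lemmas; given that, the present statement is a direct bookkeeping consequence with no real obstacle. The one point to state carefully is the identification of $T_{W^2}$ with $\tau(A^{e_0})$ inside $\Aff(S^2V^*)$ --- that the ambient group of translations of the line $\R g$ is not merely covered by, but literally realized as, the image of the central subgroup $A^{e_0}$ --- which is exactly what $\tau(A^{e_0}_t)(\beta) = \beta - \tfrac12 t g$ guarantees, since $t \mapsto -\tfrac12 t$ is a bijection of $\R$.
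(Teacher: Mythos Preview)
Your proposal is correct and is exactly the approach the paper takes: the corollary is stated without proof because it follows immediately from the explicit formulas for $\tau$ displayed just before it, together with the preceding corollary computing $H^{(1)}\cdot o^2 = \{-\tfrac12 t g\}$ and Proposition~\ref{prop.dmitri.H1.G1.COV} for (A1). Your write-up makes the verification of (A2) a bit more explicit than the paper does, but there is no difference in substance.
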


\subsection{Construction of $\SO(1,n+2)$--invariant PDEs}
 Now  we  are  ready  to   give  a  construction  of  all  $\SO(1,n+2)$--invariant second order  PDEs  for hypersurfaces in  $\SSS^{n+1}$.

Like in the Euclidean   case,  Theorem \ref{thMAIN1}  reduces  the description  of  such  PDEs
 % conformally invariant equations  of  second order,  i.e,  invariant hypersurfaces    $\E$  on  $J^2$
  to  the description of  $\CO(V)$--invariant     hypersurfaces    $ \overline{\Sigma} \subset S^2_0(V^*)$, where $S^2_0(V^*)$ is the space of  trace--free  quadratic  forms on  the tangent  space $V = T_oS^{e_0} =o^{1}$.
 According  to our  strategy,  we  have  to  construct   a  quotient  of the  affine  space   $J^2_{o^{1}}$  where the action  of  the  group   $H^{(1)}$  becomes linear.   Recall  that the  second  jet $[S_f]^2_{o}$ of  a hypersurface $S_f$  with $[S_f]^1_o =o^{1}$    is   represented   by   the    quadratic  form   $\beta$, see \eqref{eqn:beta}.
\begin{definition}
%Let $\beta \in  S^2(V^*)$  be  the second fundamental form.  Then   the traceless part $\overline{A}$ of  the  endomorphism $g^{-1} \circ \beta$ is  called  the \emph{conformal  shape operator}  of   the  hypersurface $S_f$   at  the point $o$.
The traceless part $A_{\circ}$ of  the  shape operator $A=g^{-1}\circ\beta$, where $g$ is as in \eqref{eqn:g.W.V} and $\beta$ as in \eqref{eqn:beta} is called the \emph{conformal  shape operator  of $S_f$} at  the point $o$.
\end{definition}
Let us observe that   the conformal shape  operator  ${A_{\circ}}_{a}$ of  a hypersurface $S_f$    of $M$  is   well  defined  at  any  point $a\in S_f$ and it  %$a^1=[S_f]^1_a \in J^1$ and it
depends only upon  $[S_f]^2_a$. Moreover,
%and it  is invariant  with  respect to the  group   $G^{+1} \times \R^+ \subset   G_{a^1}$. NOT TRUE!!!
the  action of  the   group $G_{a^1}$  on ${A_{\circ}}_{a}$ reduces  to  the standard action of the group $\CO(V)$ on the space of traceless forms $S_0^2V^*$ (see also \cite{MR3603758}).  Finally, let us recall that the relative invariants of $S_0^2V^*$ with respect to the group $\CO(V)$ are homogeneous  polynomials
$$
F=F(\sigma^{\circ}_2,\dots,\sigma^{\circ}_n)\,,\,\,\deg(\sigma^{\circ}_h)=h\,,
$$
where the $\sigma^{\circ}_i=\sigma^{\circ}_i(k_1,\dots,k_n)$'s are the elementary symmetric functions of the eigenvalues of the conformal shape operator $A_{\circ}$, or, equivalently,
$$
F=F(\tau^{\circ}_2,\dots,\tau^{\circ}_n)\,, \,\,\tau^{\circ}_h:=\tr(A_{\circ}^h)\,.
$$
Such an invariant polynomial
defines the $\CO(V)$--invariant algebraic hypersurface
\begin{equation}\label{eqn:euc.F.}
\overline{\Sigma}=\{F=0\}\subset S^2_0V^*\,.
\end{equation}
The associated hypersurface $\E=\E_{\overline{\Sigma}}=\SO(1,n+2)\cdot {\Sigma}\subset J^2$ is an $\SO(1,n+2)$--invariant hypersurface, that defines a second order PDEs polynomial in the second derivatives.
Solutions to PDE $\E_{\overline{\Sigma}}$ are hypersurface $S\subset M$ whose conformal shape operator ${A_{\circ}}_{\pp}$ satisfies, $\forall\,\pp\in S$, the equation
\begin{equation}\label{eqn:conf.final.form.F}
F({\tau^{\circ}_2}_{\pp},\dots,{\tau^{\circ}_n}_{\pp})=F\left(\tr({A_{\circ}^2}_{\pp}),\dots,\tr({A_{\circ}^n}_{\pp})\right)=0\,.
\end{equation}

%and $F$ is   homogeneous with respect to the $k_i$'s. (Herewith we label the elementary functions in such a way that $\sigma_1$ corresponds to the trace.)\par %The  $H^{(1)}$--invariant hypersurface $\overline{\Sigma}=\{F=0\}$ in $S^2_0V^*$ defines an invariant hypersurface $\E_{\overline{\Sigma}}\subset J^2= J^2(n,\SSS^{n+1})$.
%
%
%
%
In analogy with the Euclidean case, we can once again conclude that   Theorem \ref{thMAIN1} implies the following one.
\begin{theorem}\label{th.conf.main}
Any second order $\SO(1,n+2)$--invariant PDE for  hypersurfaces   of  $\SSS^{n+1}$ which is a polynomial in the second--order derivatives
is of the form \eqref{eqn:conf.final.form.F}, where $F$ is a homogeneous polynomial, $\deg(\tau^{\circ}_h)=h$.
%$\E_{\overline{\Sigma}}\subset J^2= J^2(n,\SSS^{n+1})$, where $\overline{\Sigma}=\{F=0\}$ is the   $H^{(1)}$--invariant hypersurface  of $S^2_0V^*$ defined by the homogeneous polynomial  \eqref{eqPolCasConf}  of $n-1$ variables.
\end{theorem}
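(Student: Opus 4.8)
\textbf{Proof plan for Theorem \ref{th.conf.main}.}
The plan is to invoke the general machinery of Theorem \ref{thMAIN1}, combined with the explicit identification of the relevant groups and spaces carried out in the preceding subsections. First I would recall that by the corollaries just established, $\SSS^{n+1}$ is a $2$--admissible homogeneous manifold: the stabilizer $H^{(1)}=(V\rtimes\CO(V))\times A^{e_0}$ acts on the fibre $J^2_{o^1}=S^2V^*$ so that the central subgroup $A^{e_0}$ acts by parallel translation along the line $\R g$, whence $W^2=\R g$, while the subgroup $V\rtimes\CO(V)$ acts linearly with $\ker\tau=V$ and $\CO(V)$ acting in the standard way. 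Consequently the quotient vector space of Theorem \ref{thMAIN1} is $V^2=J^2_{o^1}/W^2=S^2V^*/\R g\cong S^2_0V^*$, the space of traceless quadratic forms (the isomorphism being the one sending $\beta$ to the traceless part of its associated shape operator $A_\circ=g^{-1}\circ\beta$), and $L_{H^{(1)}}=\CO(V)$ acts on it in the natural way.

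Next I would apply Theorem \ref{thMAIN1} directly: $\SO(1,n+2)$--invariant hypersurfaces $\E\subset J^2$ are in $1$--$1$ correspondence with $\CO(V)$--invariant hypersurfaces $\overline\Sigma\subset S^2_0V^*$. It then remains to argue that a $\CO(V)$--invariant hypersurface of $S^2_0V^*$ which is \emph{algebraic} — i.e.\ cut out by a polynomial, which is the content of the phrase ``polynomial in the second--order derivatives'' once one unwinds the identification $J^2_{o^1}=S^2V^*$ and Proposition \ref{prop:affine.bundles} — must be the zero locus of a $\CO(V)$--relative-invariant polynomial, and that all such polynomials are, by classical invariant theory of the conformal (scaling times orthogonal) group acting on traceless symmetric forms, polynomials in the traces $\tau^\circ_h=\tr(A_\circ^h)$ for $h=2,\dots,n$ (equivalently in the elementary symmetric functions $\sigma^\circ_h$ of the eigenvalues of $A_\circ$), with the stated weighted-homogeneity $\deg\tau^\circ_h=h$ forced by the $\R^+$--scaling. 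Pulling such an $F=0$ back through $p$ and spreading it over $J^2$ by the $G$--action yields precisely the PDE \eqref{eqn:conf.final.form.F}, where for a hypersurface $S$ and a point $\pp\in S$ the quantity ${A_\circ}_\pp$ is the conformal shape operator, which by construction depends only on $[S]^2_\pp$; invariance of the equation under $\SO(1,n+2)$ follows from the $\CO(V)$--invariance of $F$ together with Lemma \ref{lemLemmaLemmino}.

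The main obstacle is the invariant-theoretic step: one must be careful that $\CO(V)$ is not compact (it contains the full positive scaling group), so one cannot simply quote the finite generation of invariants of a compact group and conclude. The clean way around this is to reduce to $\OOO(V)$: an $\OOO(V)$--invariant polynomial on $S^2_0V^*$ is, by the classical first fundamental theorem for the orthogonal group acting by conjugation on (traceless) symmetric matrices, a polynomial in $\tau^\circ_2,\dots,\tau^\circ_n$; then imposing in addition invariance (resp.\ relative invariance) under the $\R^+$--action $A_\circ\mapsto c\,A_\circ$ forces $F$ to be weighted-homogeneous in the $\tau^\circ_h$ with $\deg\tau^\circ_h=h$, since $\tau^\circ_h$ scales by $c^h$. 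One should also note explicitly — as the paper does in its ``key clarification'' remark — that the statement asserts only that every such PDE arises this way, not conversely that every $F$ gives a genuine codimension-one submanifold; the zero locus of $F$ may fail to be a hypersurface, and Theorem \ref{th.conf.main} is phrased accordingly.
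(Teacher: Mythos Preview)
Your proposal is correct and follows exactly the same route as the paper: apply Theorem \ref{thMAIN1} to the $2$--admissible manifold $\SSS^{n+1}$, identify the quotient $J^2_{o^1}/W^2$ with $S^2_0V^*$ carrying the standard $\CO(V)$--action, and then invoke the classical invariant theory for $\CO(V)$ on traceless symmetric forms. In fact you supply more detail than the paper does---the paper simply asserts the invariant-theoretic description of relative $\CO(V)$--invariants and then states that ``in analogy with the Euclidean case, Theorem \ref{thMAIN1} implies'' the result; your reduction to $\OOO(V)$ followed by the homogeneity constraint from the $\R^+$--scaling is the natural justification of that assertion.
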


\subsubsection{Description  of  $\SO(1,n+2)$--invariant  PDEs in local  coordinates}\label{sec:conf.local}

In the present section one can use the local description of $g^{-1}$ and $\beta$ contained in Section \ref{sec.euc.local}

\smallskip

Let $A=g^{-1}\circ\beta$ the shape operator of a hypersurface $S_f=\{u=f(\x)\}$, where $g$ and $\beta$ are, respectively, its first and the second fundamental form.
The traceless second fundamental form $\beta_{\circ}$ and the conformal shape operator of a hypersurface $S_f=\{u=f(\x)\}$ is
$$
\beta_{\circ}=\beta-\frac{1}{n}\tr(A)g = \beta-Hg\,, \quad A_{\circ}=g^{-1}\circ\beta_{\circ}=A-\frac{\tr(A)}{n}\mathrm{Id}=A-H\,\mathrm{Id}\, ,
$$
where $H$ is the mean curvature.

\smallskip
For $n=2$ the only relative conformal invariant is
$$
\det(A_{\circ})\,\left(=-\frac{1}{2}\tr(A_{\circ}^2)\right)=\det(A)-\frac{1}{4}\tr(A)^2=K-H^2\, ,
$$
where $K$ is the Gaussian curvature. We underline that the quantity $H^2-K$ is the coefficient of the Fubini first conformally invariant fundamental form.\footnote{In the work  \cite{FubiniApplicabilita} three of us (JG, GM and GM) have clarified the role of the conformally invariant fundamental form in the theory of PDEs of Monge--Amp\`ere type.} Also, $H^2-K=\frac14(k_1-k_2)^2$, $k_i$ being the principal curvatures, so that $\E:=\{H^2-K=0\}$ describes points having the same principal curvatures, i.e., umbilical ones. A deeper analysis shows that $\E$ is a \emph{system} of two PDEs. Indeed,  examining $\E$
over the point $o^{1}$, i.e., with $u_1 = u_2 = 0$, one   obtains
a sum of squares. To see this it is enough to recall that
\begin{eqnarray*}
H&=&\frac{1}{2}\frac{(u_2^2+1)u_{11}-2u_1u_2+(u_1^2+1)u_{22}}{(u_1^2+u_2^2+1)^{\tfrac{3}{2}}}\, ,\\
K&= &\frac{ u_{11} u_{11}-u_{12}^2}{(u_1^2+u_2^2+1)^{2}}\, ,
\end{eqnarray*}
and then to  replace the values $u_1=0$, $u_2=0$  in
\begin{equation*}
H^2-K= \frac{1}{4}\frac{((u_2^2+1)u_{11}-2u_1u_2+(u_1^2+1)u_{22})^2}{(u_1^2+u_2^2+1)^{3}}-\frac{ u_{11} u_{22}-u_{12}^2}{(u_1^2+u_2^2+1)^{2}}\, ,
\end{equation*}
which yelds immediately
\begin{equation*}
(H^2-K)_{o^{1}}=\left( \frac{1}{2}u_{11} -\frac{1}{2}u_{22}   \right)^2+u_{12}^2\, .
\end{equation*}
By invariance, it follows that  also the whole  PDE $\E$ is a   subset of codimension $2$.\par

For $n=3$, taking also into account that the characteristic polynomial of $A_{\circ}$ is $\sum_{i=0}^n\sigma^{\circ}_{n-i}\lambda^i$ and that of $A$ is $\sum_{i=0}^n\sigma_{n-i}\lambda^i$,  we have that
\begin{eqnarray*}
\sigma^{\circ}_3&&=\det(A_{\circ})=\frac{1}{3}\tr(A_{\circ}^3)=\frac{2}{27}(\tr(A))^3 +\frac{1}{3}\tr(A)\sigma_2+ \det(A)=\\ &&=2H^3+H\sigma_2+K\, ,\\
\sigma^{\circ}_2&&=\frac{1}{2}\tr(A_{\circ}^2)=\frac{1}{3}(\tr(A))^2 +\sigma_2 = 3H^2 +\sigma_2\, .
\end{eqnarray*}
Thus, for instance,
$$
2H^3+H\sigma_2+K=0\,,\quad 3H^2 +\sigma_2=0\,,\quad (2H^3+H\sigma_2+K)^2 +  (3H^2 +\sigma_2)^3=0
$$
are $\SO(1,5)$--invariant PDEs.

\subsection*{Acknowledgements} Dmitri Alekseevsky   has been partially  supported  by grant no. 18-00496S of the Czech Science Foundation.
Giovanni Moreno has been   partially founded by the Polish National Science Centre grant under the contract number 2016/22/M/ST1/00542. Gianni Manno was partially supported by the ``Starting Grant per Giovani Ricercatori'' 53\_RSG16MANGIO of the Polytechnic of Turin. This work was also partially supported by the following projects and grants:
\begin{itemize}
\item ``Connessioni proiettive, equazioni di Monge-Amp\`ere e sistemi integrabili'' by Istituto Nazionale di Alta Matematica (INdAM);
\item ``MIUR grant Dipartimenti di
Eccellenza 2018-2022 (E11G18000350001)'';
\item ``Finanziamento alla ricerca 2017-2018 (53\_RBA17MANGIO)'';
\item PRIN project 2017 ``Real and Complex Manifolds: Topology, Geometry and holomorphic dynamics'';
\item Grant 346300 for IMPAN from the Simons Foundation, matching 2015--2019 Polish MNiSW fund.
\end{itemize}
Gianni Manno and Giovanni Moreno are members of GNSAGA of INdAM.

\newpage

\bibliographystyle{abbrv}
\bibliography{../../../../0)_MIE_RICERCHE/BibUniVer/BibUniver}

%\bibliography{BibUniver}

\end{document}